\numberwithin{equation}{section}
\newtheorem{Thm}{Theorem}[section]
\newtheorem{Def}{Definition}[section]
\newtheorem{Hyp}{Hypothesis}[section]
\newtheorem{Lem}{Lemma}[section]
\newtheorem{Pro}{Proposition}[section]
\newtheorem{Rem}{Remark}[section]
\newcommand{\bm}{{\boldsymbol{m}}}
\newcommand{\bn}{{\boldsymbol{n}}}
\def\mR{\mathbb{R}}
\def\mN{\mathbb{N}}
\def\mC{\mathbb{C}}
\def\C{\mathrm{C}}
\def\R{\mathrm{R}}
\def\B{{\mathrm B}}
\def\t{\tau_N}
\def\i0t{\int_0^t}
\let\originalleft\left
\let\originalright\right
\renewcommand{\left}{\mathopen{}\mathclose\bgroup\originalleft}
\renewcommand{\right}{\aftergroup\egroup\originalright}
\begin{document}
	
\newcommand{\Addresses}{{
		\bigskip
		\footnote{
	\noindent  \textsuperscript{1} U. S. Air Force Research Laboratory, Wright Patterson Air Force Base, Ohio 45433, U. S. A.
	
	\par\nopagebreak
	\noindent  \textsuperscript{2} NRC-Senior Research Fellow, National Academies of Science, Engineering and Medicine,  U. S. Air Force Research Laboratory, Wright Patterson Air Force Base, Ohio 45433, U. S. A.		
		\par\nopagebreak \noindent
		\textit{e-mail:} \texttt{provostsritharan@gmail.com} 	$^*$Corresponding author.
	\par\nopagebreak
\noindent  \textit{e-mail:} \texttt{saba.mudaliar@us.af.mil}

}}}

\title[Filtering of Stochastic Nonlinear Wave Equations]{Filtering of Stochastic Nonlinear Wave Equations  \Addresses}
\author[ S. S. Sritharan and Saba Mudaliar]
{  Sivaguru S. Sritharan\textsuperscript{2*} and Saba Mudaliar\textsuperscript{1}}

\maketitle

\begin{abstract}
In this paper we will develop linear and nonlinear filtering methods for a large class of nonlinear wave equations that arise in applications such as quantum dynamics and laser generation and propagation in a unified framework. We consider both stochastic calculus and white noise filtering methods and derive measure-valued evolution equations for the nonlinear filter and prove existence and uniqueness theorems for the solutions. We will also study first order approximations to these measure-valued evolutions by linearizing the wave equations and characterize the filter dynamics in terms of infinite dimensional operator Riccati equations and establish solvability theorems. 
\end{abstract}

\textit{Key words:} Nonlinear filtering, nonlinear wave equations, Kalman filters, quantum dynamic equations, Riccati equations, It\^o calculus, white noise calculus.

Mathematics Subject Classification (2010): 81S20, 81V10, 60G51, 60H15, 60H17

\section{Introduction}
Sensing and estimation of random fields has many applications in engineering sciences. One of the most powerful techniques developed in this context is filtering theory and has its origins in the works of Wiener and Kolmogorov \cite{Wiener1949, Kailath1974}. Filtering of Markov process was initiated by Stratenovich \cite{Strato1960}. However, the filtering of linear systems developed by Kalman has gained wide spread utilization due to its readily implementable form of tracking the state estimator and error covariance in a recurssive manner. Nonlinear filtering theory of Stratenovich  was further crystelized by several authors (see \cite{Kallianpur1980, Kallianpur1988} for a comprehensive discussion) leading to the famous Fujisaki-Kallianpur-Kunita and Zakai equations for the evolution of unnormalized as well as normalized conditional measures. Filtering of linear infinite dimensional systems was initiated by Bensoussan \cite{Bensoussan1971} to derive infinite dimensional Kalman filters. Filtering of nonlinear system of partial differential equations was initiated for the stochastic Navier-Stokes equations \cite{Sritharan1995} and stochastic reaction diffusion equations \cite{Hobbs1996}. This subject was further developed to incorporate L\'evy noise forced Navier-Stokes equations \cite{Fernando2013} and also classical and quantum spin systems \cite{Sritharan2023a}. In this paper we will further develop linear and nonlinear filtering theory to address a large class nonlinear wave equations with Gaussian and L\'evy forcings with several specific applications in laser generation and propagation and quantum dynamics. Many important applications of the above type can be captured in a unified mathematical model of nonlinear wave equations as observed by several authors \cite{Reed1975a, Reed1976,Reed1980, Strauss1978, Strauss1989}. We will develop our filtering study to such general class of nonlinear evolutions. 

We discuss next a summary of the sections of this paper. In the remaining part of this section we provide a general mathematical framework of semilinear wave equations in a framework suitable for semigroup treatment of this system, discuss the mathematical properties of the operators involved and also provide a solvability theorem for the deterministic nonlinear wave problem. In Section 2 we discuss four important applications in laser generation and propagation and demonstrate that all of these models can be cast in our unified framework. In Section 3 we discuss the solvability theory of stochastic nonlinear wave equations by casting the problem in the mild solution form. In section 4 we develop nonlinear filtering for the stochastic wave equation in the Ito calculus framework and in Section 5 we develop white noise filtering method. In Section 6 we consider linear stochastic wave equation and show that the nonlinear filtering equations of Ito calculus and white noise type result in infinite dimensional Kalman filters. We describe the time evolution of least square best estimate and the associated dynamics of error covariance in terms of infinite dimensional Riccati equations and establish their solvability. Finally we also wish to point out that to our knowledge this is the first time linear and nonlinear filtering method has been developed for stochastic wave equations.

We will consider a class of stochastic semilinear evolution of the type:

\begin{equation}
	\frac{\partial \varphi}{\partial t} = \left(- iA +  V(t)\right )\varphi +J(\varphi), \label{eqn:1.1}
\end{equation}
where $A$ is a self-adjoint unbounded operator, $V$ is a space-time random field, and $J(\cdot)$ is a nonlinear interaction function.
We will formulate the mild solution form of the above semilinear stochastic evolution  with the random potential $V$ represented by the generalized derivative of a Hilbert space valued stochastic process of Wiener, Poisson and L\'evy types.

  Let $(\Omega, \Sigma, \Sigma_{t},m)$ be a complete filtered probability space. It\^o calculus formulation is as follows:
\begin{equation}
	\varphi(t)= e^{-i A t}\varphi(0) +\int_{0}^{t}e^{-i A(t-s)}	 J(\varphi(s)) ds+\int_{0}^{t}e^{-i A(t-s)}\varphi(s) dM(s), \label{eqn:1.2}
\end{equation}

The random potential $V$ is modelled as the generalized time derivative of $M$, i.e. $V = dM/dt $ in the generalized sense and $M(t)$ is a local semi-martingale. Typical examples are:
\begin{enumerate}
	\item $M(t)=W(t)$ an $H$-valued Wiener process with covariance $Q$,
	\item  $dM(t) =  dW(t) +\int_{Z}\Psi (z)d\bar{\mathcal N}(t,z)$ where $\bar{\mathcal N}(\cdot,\cdot)$ is a (compensated) Poisson random (martingale) measure and $M(t)$ is an $H$-valued L\'{e}vy process. Here $H$ is a separable Hilbert space.
\end{enumerate}

We recall here the definition of L\'evy process \cite{Applebaum2009}:
\begin{Def} Let $Z$ be a Banach space. An $Z$-valued stochastic process $L(t)$ is a {\it L\'evy process} if
	\begin{enumerate} 
			\item $L(0)=0,$  a.s.;
			\item $L$ has stationary and independent increments;
			\item $L$ is stochastically continuous: for all bounded and measurable functions $\phi$ the function $t\rightarrow E[\phi(L(t))]$ is continuous on $\R^{+}$;
			\item $L$ has a.s. c\'adl\'ag paths. 
	\end{enumerate}
	
\end{Def}
{\bf Special Cases}: When the increments are Gaussian distributed the process is called Brownian motion (or Wiener process) and when the increments are Poisson distributed then it is called Poisson process.

L\'evy Process and Poisson Random Measure: Let $L$ be a real valued L\'evy process and let $A\in {\mathcal B}(\mR).$ We define the counting measure:
		\begin{displaymath}
			{\mathcal N}(t,A)=\# \left \{ s \in (0,T];\Delta L(s)=L(s)-L(s^{-})\in A\right \} \in \mN \cup \{\infty \}.
		\end{displaymath}
	then we can show that 
	\begin{enumerate}
			\item ${\mathcal N}(t,A) $ is a random variable over $(\Omega, {\mathcal F}, m)$;
			\item ${\mathcal N}(t,A) \sim$ Poisson$(t\nu(A))$ and ${\mathcal N}(t,\emptyset)=0;$
			\item For any disjoint sets $A_{1},\cdots, A_{n}$ the random variables ${\mathcal N}(t, A_{1}),\cdots, {\mathcal N}(t, A_{n})$ are pairwise independent.
	\end{enumerate}	
	Here $\nu(A)={\mathcal E}[{\mathcal N}(1, A)]$ is a Borel measure called L\'evy measure. {\it Compensated Poisson random measure} will then be the martingale:
	\begin{displaymath}
		\bar{{\mathcal N}} (t,A)={\mathcal N}(t,A)-t\nu(A), \hspace{.1in} \forall A\in {\mathcal B}(\mR).
	\end{displaymath}

Let $W(t)$ be an $H$-valued Wiener process with symmetric trace-class covariance operator $Q\in {\mathcal L}(H,H)$:
\begin{displaymath}
	E[(W(t),\psi)(W(\tau),\phi)]=t\wedge \tau (Q\psi,\phi),\hspace{.1in} \forall \phi,\psi\in H,
\end{displaymath}
and $\mbox{ Tr}Q <+\infty$ (finite trace). Using the eigensystem $\{ \lambda_{i}, e_{i}\}_{i=1}^{\infty}, e_{i}\in H, i\geq 1$ of $Q$ we can also express the infinite dimensional Wiener process $W(t)$ as a series expansion \cite{DaPrato2014}:
\begin{displaymath}
	W(t)=\sum_{i=1}^{\infty}\sqrt{\lambda_{i}}e_{i}\beta_{i}(t) \in H, \hspace{.1in} t\geq 0,
\end{displaymath}
where $\beta_{i}$ are standard independent Brownian motions and $\mbox{ Tr}Q=\sum_{i=1}^{\infty} \lambda_{i} <+\infty.$

For the pure jump case we can construct \cite{Curtain1975} $M(t)$ as an $H$-valued real compensated compound Poisson process $\bar{q}(t)\in H$:
\begin{displaymath}
	M(t)= \bar{q}(t)=\sum_{i=0}^{\infty} \bar{q}(t)e_{i}, 
\end{displaymath}
where $\{e_{i}\}_{i=0}^{\infty}\subseteq H$ is a basis in $H$. Here 
\begin{displaymath}
	E \{q_{i}(t)-q_{i}(s)\}=\mu_{i}(t-s),
\end{displaymath}
\begin{displaymath}
	E \left \{ (\bar{q}_{i}(t_{1})-\bar{q}_{i}(s_{1})) (\bar{q}_{j}(t_{2})-\bar{q}_{j}(s_{2}))  \right\}=0, \hspace{.1in} 0\leq s_{1} < t_{1}\leq s_{2} < t_{2}\leq T,
\end{displaymath}
\begin{displaymath}
E \left \{ (\bar{q}_{i}(t)-\bar{q}_{i}(s)) (\bar{q}_{j}(t)-\bar{q}_{j}(s))  \right\}=\lambda_{ij}(t-s), \hspace{.1in} 0\leq s\leq t\leq T
\end{displaymath}
where the compensated Poisson process $\bar{q}_{i}(t)=q_{i}(t)-\mu_{i} t $  and $ \sum_{i=0}^{\infty} \mu_{i}<\infty.	$ We will denote the covariance matrix as $\Lambda =\{\lambda_{ij}\}$:
\begin{displaymath}
\mbox{ Cov}(\bar{q}(t)-\bar{q}(s))	=\Lambda (t-s).
\end{displaymath}

Next we will describe the mathematical structure of operators $A$ and $J$ and will indicate that these properties are satisfied by the specific nonlinear wave models considered in the next section. We will also prove a solvability theorem for a deterministic semilinear evolution equation associated with (\ref{eqn:1.1}). 
\subsection{The Free Propagator and its $m$-Dissipative Generator}	
Let us begin with an important class of operators that cover the linear part of the equation \ref{eqn:1.1}. Although the class of nonlinear dissipative operators (or accretive operators) has led to powerful mathematical developments in nonlinear evolution equations \cite{Barbu1976}, we will only consider the linear case here. A linear operator $A$ with domain $D(A)$ in a Banach space $X$ is called dissipative \cite{Hille1996} if 
\begin{displaymath}
	\|u-\lambda Au\|\geq \|u\|, \forall u\in D(A)\mbox{ and } \forall \lambda >0.	
\end{displaymath}
Let $X$ be a Hilbert space. It can be shown that a linear operator $A$ in $X$ is dissipative if and only if
\begin{displaymath}
	\langle Au, u\rangle \leq 0, \hspace{.1in} \forall u\in D(A).
\end{displaymath}	
\begin{Def}
	An operator $A$ in $X$ is $m$-dissipative if
	
	(i)  $A$ is dissipative,
	
	(ii) $\forall \lambda >0$ and $\forall f\in X$ there exists $u\in D(A)$ such that $u-\lambda Au=f$.
\end{Def}
We will collect some relevant facts on $m$-dissipative operators on a Hilbert space in the theorem below (see also \cite{Cazenave1998}):	
\begin{Thm} \label{Thm:1.1}
	\begin{enumerate}
		\item If $A$ is $m$-dissipative in $X$ then $D(A)$ is dense in $X$.
		\item	If $A$ is a skew-adjoint operator in $X$ then $A$ and $-A$ are $m$-dissipative. 
		\item	If $A$ is a self-adjoint operator and $A\leq 0$, i.e., $\langle Au, u\rangle \leq 0, \forall u\in D(A)$ then $A$ is $m-dissipative$.
	\end{enumerate}
\end{Thm}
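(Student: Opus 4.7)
The plan is to handle the three assertions in order, using throughout the Hilbert-space reformulation of dissipativity recorded in the excerpt: $A$ is dissipative iff $\mathrm{Re}\langle Au,u\rangle\le 0$ on $D(A)$, which is equivalent to $\|(I-\lambda A)u\|\ge\|u\|$ for every $\lambda>0$ and every $u\in D(A)$.

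For part (1), I would argue by contradiction. Let $f\in D(A)^{\perp}$ and, fixing some $\lambda>0$, invoke $m$-dissipativity to produce $u\in D(A)$ with $u-\lambda Au=f$. Pairing with $u$ and taking real parts gives
\begin{equation*}
	\|u\|^{2}-\lambda\,\mathrm{Re}\langle Au,u\rangle=\mathrm{Re}\langle f,u\rangle=0,
\end{equation*}
where the right-hand side vanishes because $u\in D(A)$ while $f\in D(A)^{\perp}$. Since $\mathrm{Re}\langle Au,u\rangle\le 0$, this forces $u=0$ and hence $f=-\lambda Au=0$, so $D(A)^{\perp}=\{0\}$ and $D(A)$ is dense.

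For parts (2) and (3), the dissipativity of the operators in question is immediate. If $A$ is skew-adjoint then $\langle Au,u\rangle=\langle u,-Au\rangle=-\overline{\langle Au,u\rangle}$ is purely imaginary, so both $A$ and $-A$ satisfy the Hilbert-space criterion; if $A$ is self-adjoint with $\langle Au,u\rangle\le 0$ the criterion is the hypothesis itself. The nontrivial task is surjectivity of $I-\lambda A$ for every $\lambda>0$. The a-priori estimate $\|(I-\lambda A)u\|\ge\|u\|$, together with the closedness that is automatic for self- and skew-adjoint operators, makes $I-\lambda A$ injective with closed range, so it suffices to prove the range is dense. Let $v$ be orthogonal to $\mathrm{Range}(I-\lambda A)$; then $\langle(I-\lambda A)u,v\rangle=0$ for every $u\in D(A)$, which shows that the functional $u\mapsto\langle Au,v\rangle=\lambda^{-1}\langle u,v\rangle$ is bounded on $D(A)$, so $v\in D(A^{*})$ and $(I-\lambda A^{*})v=0$. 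In case (2), $A^{*}=-A$ and this reads $(I+\lambda A)v=0$, and dissipativity of $-A$ gives $\|v\|\le\|(I+\lambda A)v\|=0$. In case (3), $A^{*}=A$ and dissipativity of $A$ gives $\|v\|\le\|(I-\lambda A)v\|=0$. In either case $v=0$, so the range is all of $X$.

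The main subtle point is the appeal to self- or skew-adjointness rather than mere symmetry: it is precisely the identification $D(A^{*})=D(A)$ with $A^{*}=\pm A$ that forces the range-annihilator vector $v$ to lie back in $D(A)$ and to satisfy an equation to which the dissipative a-priori bound applies. Once that is observed, the whole argument reduces to the two short algebraic identities above, with no appeal to spectral theory or semigroup generation results.
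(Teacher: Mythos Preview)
Your proof is correct. The paper does not supply its own proof of this theorem; it presents the three statements as standard facts collected from the literature, with a pointer to Cazenave--Haraux. So there is no ``paper's proof'' to compare against, and your direct Hilbert-space argument---density via $D(A)^{\perp}=\{0\}$, and surjectivity of $I-\lambda A$ via closed range plus annihilator of the range landing in $D(A^{*})=D(A)$---is exactly the kind of elementary verification that the cited reference carries out. One cosmetic remark: in part (1) you write $f=-\lambda Au=0$ after concluding $u=0$; strictly this should read $f=u-\lambda Au=0-\lambda A(0)=0$, but of course $A(0)=0$ by linearity, so the conclusion stands.
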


We recall here that if $A$ is self-adjoint then $iA$ is skew-adjoint:
\begin{displaymath}	
	(iA)^{*}=-iA^{*}=-iA.
\end{displaymath}		

\begin{Rem}
	The following examples of skew-ajoint operators are encountered later in the next section.		
	
	(1) Free Schrodinger operator:
	\begin{displaymath}
		A=i\Delta.
	\end{displaymath}
	Here $\Delta$ can be defined as a self-adjoint operator with a suitable choice of domain and hence $i\Delta$ is a skew-adjoint operator and is $m$-dissipative.
	
	(2) Wave operator: let $B=\sqrt{ -\Delta +k_{0}^{2}I}$, $D(B)=\left \{u \in L^{2}(\mR^{3}); Bu\in L^{2}(\mR^{3})\right \}$, and we define 
	\begin{displaymath}
		A= i\begin{bmatrix}
			0 & I\\
			-B^{2} & 0
		\end{bmatrix}.
	\end{displaymath}
	Then $A$ is a self-adjoint operator and $iA$ is skew-adjoint and $m$-dissipative.
	
	Moreover, the propagator associated with $iA$ is given by \cite{Segal1963,Reed1976}:
	\begin{displaymath}
		e^{-itA}=\begin{bmatrix}
			cos (tB) & B^{-1}sin (tB)\\
			-B sin(tB) & cos (tB)
		\end{bmatrix}.	
	\end{displaymath}
	
	(3) Dirac operator
	\begin{displaymath}
		D_{e}=-i \alpha\cdot\nabla + m\beta.	
	\end{displaymath}
	To understand this operator we describe it component-wise by setting $D_{e}u=v \in \mC^{4}$. Here the four dimensional complex vector space $\mC^{4}$ is called spinor space.
	\begin{displaymath}
		v_{j}=(D_{e}u)_{j}=i^{-1} \sum_{l=1}^{3}\sum_{k=1}^{4}(\alpha_{l})_{jk}\frac{\partial u_{k}}{\partial x_{l}}(x) +\sum_{k=1}^{4}\beta_{jk}u_{k}(x),
	\end{displaymath}
	where $\alpha_{k}$ and $\beta$ are Hermitian matrices satisfying the commutation relations
	\begin{displaymath}
		\alpha_{j}\alpha_{k}+\alpha_{k}\alpha_{j}=2\delta_{jk}I, \hspace{.1in} j, k=1,2,3,4.
	\end{displaymath}
	Here $\alpha_{4}=\beta$, and $I$ is the $4\times 4$ unit matrix.
	
	$D_{e}$ is a self-adjoint operator \cite{Kato1976} and hence $iD_{e}$ is a skew-adjoint operator and is $m$-dissipative.

	(4) Maxwell-Dirac operator
	\begin{displaymath}
		A= \begin{bmatrix}
			D_{e} & 0\\
			0 & A_{W}
		\end{bmatrix}
	\end{displaymath}	
	with $	D_{e}$ the Dirac operator and $A_{W}$ the wave operator defined above. Then $A$ is a self-adjoint operator and $iA$ is skew-adjoint and $m$-dissipative.
	
\end{Rem}

Let us now recall three well-known theorems\cite{Hille1996} that are relevant in characterizing the free propagator semigroup (actually group) generated by the linear part $iA$ and its bounded perturbations that we encounter in this paper.

\begin{Thm}\cite{Yosida1991}\label{Thm:1.2} (Hille-Yosida-Phillips) A linear operator $A$ is the generator of a contraction semigroup $S(t)$ in X if and only if $A$ is $m$-dissipative with a dense domain $D(A)$.
\end{Thm}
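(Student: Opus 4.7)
The plan is to prove both directions by reducing to resolvent-based characterizations. For necessity, assume $A$ generates a contraction semigroup $S(t)$. The density of $D(A)$ is standard: for any $x\in X$, the elements $x_t := \frac{1}{t}\int_0^t S(s)x\,ds$ lie in $D(A)$ (since $S$ is strongly continuous and one verifies $(S(h)-I)x_t/h$ converges), and $x_t\to x$ as $t\to 0^+$ by strong continuity. For dissipativity, I would use the resolvent representation $R(\lambda,A)f=\int_0^\infty e^{-\lambda s}S(s)f\,ds$, which converges for $\lambda>0$ since $\|S(s)\|\le 1$, and satisfies $\|R(\lambda,A)\|\le 1/\lambda$. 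Setting $f=(I-\mu A)u$ with $\mu=1/\lambda$ yields $\|u\|\le \|u-\mu Au\|$, so $A$ is dissipative; the existence of $R(\lambda,A)$ on all of $X$ simultaneously gives the range condition $\mathrm{Range}(I-\mu A)=X$ for every $\mu>0$, hence $m$-dissipativity.

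For sufficiency, assume $A$ is $m$-dissipative with $\overline{D(A)}=X$. The idea is to mollify $A$ by Yosida approximations. For each $\lambda>0$, put $J_\lambda:=(I-\lambda A)^{-1}\in \mathcal{L}(X)$ (well-defined by $m$-dissipativity, with $\|J_\lambda\|\le 1$ by dissipativity) and define the bounded operator $A_\lambda:=\frac{1}{\lambda}(J_\lambda-I)=AJ_\lambda$. Each $A_\lambda$ generates a uniformly continuous semigroup $S_\lambda(t):=e^{tA_\lambda}$, and I would check
\[
\|S_\lambda(t)\| = e^{-t/\lambda}\,\|e^{(t/\lambda)J_\lambda}\| \le e^{-t/\lambda}e^{t/\lambda}=1,
\]
so each $S_\lambda(t)$ is a contraction. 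Next I would establish two approximation facts: (i) $J_\lambda u\to u$ and $A_\lambda u\to Au$ as $\lambda\to 0^+$ for $u\in D(A)$, using $\|J_\lambda u-u\|=\lambda\|AJ_\lambda u\|\le \lambda\|Au\|$ and density of $D(A)$ to extend (i) to all of $X$; and (ii) for $u\in D(A)$,
\[
\|S_\lambda(t)u-S_\mu(t)u\| \le \int_0^t \|S_\mu(t-s)S_\lambda(s)(A_\lambda-A_\mu)u\|\,ds \le t\,\|A_\lambda u-A_\mu u\|,
\]
since the $S_\lambda(t)$ and $S_\mu(s)$ commute (both being functions of $J_\lambda$ and $J_\mu$, which commute by the resolvent identity). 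This Cauchy estimate, uniform in $t$ on compact intervals, lets me define $S(t)u:=\lim_{\lambda\to 0^+}S_\lambda(t)u$ first on $D(A)$ and then extend by density and the uniform contraction bound to all of $X$.

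The remaining step is to identify the generator of $S(t)$ with $A$. I would pass to the limit in the integrated identity $S_\lambda(t)u-u=\int_0^t S_\lambda(s)A_\lambda u\,ds$ using $A_\lambda u\to Au$ and $S_\lambda(s)\to S(s)$ strongly, obtaining $S(t)u-u=\int_0^t S(s)Au\,ds$ for $u\in D(A)$, from which $u$ lies in the domain of the generator $B$ of $S(t)$ and $Bu=Au$. To conclude $B=A$, one uses that both $B$ and $A$ are $m$-dissipative (the former by the already-proved necessity direction), so $I-B$ and $I-A$ are both bijective from their domains onto $X$; coincidence on $D(A)$ combined with injectivity of $I-B$ forces $D(A)=D(B)$. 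The principal technical obstacle is the commutativity and uniform-contraction bookkeeping needed for the Cauchy estimate in (ii); once that is in hand, the rest is routine passage-to-the-limit in strongly continuous families, and the theorem follows as recorded in \cite{Yosida1991}.
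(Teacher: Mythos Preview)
Your proof is correct and follows the classical Yosida approximation argument. However, the paper itself does not prove this theorem at all: it is stated as a well-known result with a citation to \cite{Yosida1991}, introduced by the phrase ``Let us now recall three well-known theorems,'' and no proof is supplied. So there is nothing in the paper to compare your argument against; your write-up is essentially the textbook proof one finds in Yosida (or Pazy), and is precisely what the citation points to.
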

If $A$ is a skew-adjoint operator then $S(t)$ can be extendable to a group that is unitary.

\begin{Thm}\label{Thm:1.3} \cite{Kato1976} (Stone's Theorem)  Let ${\mathcal N}$ be a von Neumann algebra and let $\{U_{t}\}_{t\in \R} \subset {\mathcal N}$ be a group of unitary operators that is strongly continuous. Then there is a unique self-adjoint operator $A$ affiliated to ${\mathcal N}$ (i.e., $(A+iI)^{-1}\in {\mathcal N}$), the Stone generator, such that $U_{t}= e^{itA}$.
\end{Thm}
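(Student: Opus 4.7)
The plan is to construct the generator $A$ directly from the unitary group $\{U_t\}$ and then exploit an explicit integral representation of its resolvent, which will simultaneously deliver self-adjointness and affiliation to $\mathcal{N}$. Throughout, let $H$ denote the Hilbert space on which $\mathcal{N}\subset\mathcal{B}(H)$ acts.

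First, I would define the candidate generator by
\begin{equation*}
A\psi = \lim_{t\to 0}\frac{U_t\psi - \psi}{it}
\end{equation*}
on the domain $D(A)$ where this limit exists. Density of $D(A)$ follows by mollification: for $\psi\in H$ and $f\in C_c^\infty(\mR)$, the vector $\psi_f = \int_{\mR} f(t)\,U_t\psi\, dt$ lies in $D(A)$ by strong continuity of $t\mapsto U_t\psi$ and differentiation under the integral sign, and running $f$ through an approximate identity yields $\psi_{f_\varepsilon}\to\psi$. Symmetry of $A$ then follows by differentiating $\langle U_t\psi, U_t\varphi\rangle = \langle\psi,\varphi\rangle$ at $t=0$ for $\psi,\varphi\in D(A)$.

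The main analytic step is self-adjointness, which I would obtain via the resolvent formula
\begin{equation*}
(A+iI)^{-1} = -i\int_0^\infty e^{-t}\,U_t\, dt,
\end{equation*}
together with its conjugate for $(A-iI)^{-1}$; the integrals are understood in the strong (Bochner) sense and converge because $\|U_t\|=1$. For $f\in D(A)$, commuting $A$ past the integral using $\frac{d}{dt}U_t f = iAU_t f$ and integrating by parts in $t$ yields $(A+iI)\psi = f$, where $\psi$ denotes the right-hand integral; density of $D(A)$ then extends the identity to all of $H$ and simultaneously shows that the integral takes values in $D(A)$. With both $A\pm iI$ surjective on a symmetric $A$, self-adjointness follows from the standard von Neumann range criterion.

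Having $A$ self-adjoint, I form $\tilde U_t = e^{itA}$ via the Borel functional calculus and verify $\tilde U_t = U_t$ by noting that for $\psi\in D(A)$ the difference $(\tilde U_t - U_t)\psi$ satisfies a first-order linear ODE with zero initial data, then extending by density; uniqueness of $A$ is then immediate from differentiation at $t=0$. Affiliation to $\mathcal{N}$ is the decisive payoff of the resolvent formula: the Bochner integral $-i\int_0^\infty e^{-t}U_t\, dt$ is the strong limit of Riemann sums $-i\sum_k e^{-t_k}U_{t_k}\Delta t_k$, each of which lies in $\mathcal{N}$, and since a von Neumann algebra is closed in the weak (hence strong) operator topology, $(A+iI)^{-1}\in\mathcal{N}$. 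The main obstacle is the rigorous justification of the resolvent formula — one must carefully commute the unbounded $A$ with the Bochner integral and verify that the integral maps into $D(A)$ — but once this identity is secured, both self-adjointness and affiliation fall out of the same computation.
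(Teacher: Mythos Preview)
The paper does not give its own proof of this theorem; it simply records Stone's theorem as a classical result with a citation to Kato, so there is no argument in the paper to compare against.

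Your outline is the standard proof and is essentially correct. A small point of presentation: in the resolvent step you start with $f\in D(A)$ and then appeal to density to extend to $H$, but this detour is unnecessary and slightly obscures the logic. For \emph{any} $f\in H$ one sets $\psi=-i\int_0^\infty e^{-t}U_t f\,dt$ and computes directly, using only the group law and strong continuity, that $\psi\in D(A)$ with $(A+iI)\psi=f$; no prior assumption $f\in D(A)$ is needed, and this immediately gives surjectivity of $A+iI$ (and similarly $A-iI$ via $\int_0^\infty e^{-t}U_{-t}\,dt$). With that adjustment the argument is clean: symmetry plus surjectivity of $A\pm iI$ gives self-adjointness, the ODE comparison identifies $e^{itA}$ with $U_t$, and the Riemann-sum approximation of the Bochner integral together with strong closedness of the von Neumann algebra yields $(A+iI)^{-1}\in\mathcal{N}$, i.e., affiliation.
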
	
The following theorem is suitable for the linearized wave equation (linearization of \ref{eqn:1.1} about a constant state $X_{0}$) giving a perturbed generator of the form:  ${\mathcal A}=-iA +J'(X_{0})$.
\begin{Thm}\cite{Pazy1983} \label{Thm:1.4}
	Let $X$ be a Banach space and let $A$ be the infinitesimal generator of a $C_{0}$ semigroup denoted by $e^{At}$ on $X$ satisfying $\|S(t)\|\leq Me^{\omega t}$. If $B$ is a bounded linear operator on $X$ 	then ${\mathcal A}=A+B$ is the infinitesimal generator of a $C_{0}$ semigroup denoted by $e^{(A+B)t}$ on $X$ satisfying $\|e^{(A+B)t}\|\leq Me^{(\omega +M\|B\|) t}$.
\end{Thm}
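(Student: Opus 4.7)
The plan is to construct the perturbed semigroup directly via the Dyson (variation of constants) series and verify it has the claimed properties. I would look for $T(t)$ satisfying the integral equation
\begin{equation*}
T(t)x = e^{At}x + \int_0^t e^{A(t-s)} B\, T(s) x \, ds, \quad x \in X,
\end{equation*}
and build it as the sum of the Picard iterates $T_0(t) := e^{At}$ and $T_{n+1}(t) := \int_0^t e^{A(t-s)} B\, T_n(s)\, ds$, each viewed as a bounded operator on $X$ (well-defined since the integrand is strongly continuous).

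The first step is a uniform norm bound: by induction on $n$, using $\|e^{At}\| \leq M e^{\omega t}$ and the boundedness of $B$, I would show
\begin{equation*}
\|T_n(t)\| \leq M^{n+1} \|B\|^n \frac{t^n}{n!} e^{\omega t}, \quad n \geq 0,\ t \geq 0.
\end{equation*}
Summing over $n$ gives absolute convergence of $T(t) := \sum_{n=0}^{\infty} T_n(t)$ in operator norm, uniformly on compact $t$-intervals, and simultaneously the growth estimate $\|T(t)\| \leq M e^{(\omega + M\|B\|)t}$ asserted in the theorem.

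Next I would verify the $C_0$-semigroup axioms. Strong continuity on $[0,\infty)$ with $T(0) = I$ follows from the uniform convergence of the series together with the strong continuity of $e^{At}$, since each $T_n(t)$ is strongly continuous by construction. For the semigroup property $T(t+s) = T(t) T(s)$, I would argue that both $u_1(t) := T(t+s)x$ and $u_2(t) := T(t)T(s)x$ satisfy the same integral equation in $t$ with common value $T(s)x$ at $t=0$, and invoke a Gronwall uniqueness argument on each bounded interval; alternatively one verifies the identity termwise by rearranging the iterated Dyson integrals with Fubini.

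Finally, to identify the generator, for $x \in D(A)$ I would differentiate $T(t)x$ at $t = 0^+$ using the integral equation: the $e^{At}x$ piece contributes $Ax$, and the integral term contributes $Bx$ because its integrand is strongly continuous at $s=0$. This shows the generator extends $A+B$ with domain $D(A)$. For the reverse inclusion, I would use a resolvent argument: for $\lambda > \omega + M\|B\|$ one has $\|B(\lambda - A)^{-1}\| < 1$, so $I - B(\lambda - A)^{-1}$ is boundedly invertible and the factorization
\begin{equation*}
(\lambda - (A+B))^{-1} = (\lambda - A)^{-1}\bigl(I - B(\lambda - A)^{-1}\bigr)^{-1}
\end{equation*}
forces $D(A+B) = D(A)$, identifying the generator as exactly $A+B$. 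The main obstacle I anticipate is the semigroup identity $T(t+s) = T(t)T(s)$: the bookkeeping in either the Fubini rearrangement or the Gronwall uniqueness step requires some care, but once it is in place, the growth bound and generator identification follow immediately from the Dyson estimate and Theorem \ref{Thm:1.2}.
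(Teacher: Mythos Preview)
The paper does not give its own proof of this theorem: it is stated with a citation to Pazy and used as a black box, so there is nothing in the paper to compare against. Your Dyson--Phillips series construction is correct and is essentially the argument in Pazy's book (Chapter~3, Theorem~1.1): the inductive bound $\|T_n(t)\|\le M^{n+1}\|B\|^n t^n e^{\omega t}/n!$ sums to the asserted growth estimate, and your generator identification via the resolvent factorization is the standard way to close the argument.
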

We will consider time dependent perturbations of the form ${\mathcal A}(t)=-iA +J'(X_{0}(t))$ and associated generation theorems later in Section 6.

\subsection{Continuity Properties of the Nonlinearity}	
Let us define a class of nonlinearities and their continuity properties that would enable the solvability theorems of deterministic and stochastic type. Leter we will demonstrate that this class covers the laser generation and interaction models considered in this paper.
\begin{Hyp} \label{Hyp:1.1}
	Let $A$ be a self-adjoint operator on a Hilbert space $H$ and $N$ be a positive integer. Suppose that $J$ is a densely defined nonlinear mapping on $H$ so that $J:D(A^{j})\rightarrow D(A^{j})$ for $0 \leq j\leq N$,:	
	\begin{enumerate}
		\item $\|A^{j}J(\varphi)\|\leq C \left (\|\varphi\|,\cdots, \|A^{j}\varphi\|\right) \|A^{j}\varphi\|,$
		\item $\|A^{j}(J(\varphi)-J(\psi))\|\leq C \left (\|\varphi\|, \|\psi\|,\cdots, \|A^{j}\varphi\|,\|A^{j}\psi\|\right) \|A^{j}(\varphi-\psi)\|,$
	\end{enumerate}
	for $j=0,1,\cdots,N$ and all $\phi, \psi\in D(A^{N})$, with constants $C(\cdot)$ monotonically increasing functions of the norms indicated.
\end{Hyp}	
\subsection{Local and Global Solvability Theory for the Unified Abstract Deterministic Semilinear Equation}
In this section we will recall \cite{Reed1975a, Reed1976} the solvability results for the deterministic abstract semilinear evolution in the mild form. These theorems can serve as guidelines for the stochastic problem. We consider:
\begin{equation}
	\varphi(t)= e^{i A t}\varphi(0) +i\int_{0}^{t}e^{-i A(t-s)}	 J(\varphi(s)) ds, \label{eqn:2.1}
\end{equation}
which corresponds to the strong form
\begin{equation}
	\frac{\partial \varphi}{\partial t} = - iA \varphi +J(\varphi), \label{eqn:2.2}
\end{equation}
\begin{equation}
	\varphi(0)=\varphi_{0}. \label{eqn:2.3}
\end{equation}

\begin{Thm} \label{Thm:1.5} \cite{Reed1975a, Reed1976}
	Let $A$ be a self-adjoint operator on a Hilbert space $H$ with domain $D(A)\subset H$ and let $J:D(A^{j})\rightarrow D(A^{j})$ for $0\leq j\leq N$, be a nonlinear mapping which satisfies Hypothesis \ref{Hyp:1.1}. Then, given $\varphi_{0}\in D(A^{N})$, there is a $T>0$ and a unique $D(A^{N})$-valued function $\varphi(t)$ on $[0,T)$ which satisfies (\ref{eqn:2.2}), (\ref{eqn:2.3}). For each set of the form $\left \{\varphi\vert \|A^{j}\varphi\|\leq a_{j}, j=0, \cdots, N\right \}$, $T$ can be chosen uniformly for all $\varphi_{0}$ in the set. The solution satisfies the estimate:
	\begin{equation}
		\sup_{t\in [0,T)} \sum_{j=0}^{N}\|A^{j}\varphi(t)\| \leq \sum_{j=0}^{N}	\|A^{j}\varphi_{0}\|e^{C(T)}.
	\end{equation}
 Suppose that in addition to the Hypothesis \ref{Hyp:1.1}, $J$ satisfies the following property: If $\varphi(t)$ is a $j$ times strongly differentiable $D(A^{N})$-valued function so that $\varphi^{(k)}(t)\in D(A^{N-k})$ and $A^{N-k}\varphi^{(k)}(t)$ is continuous for all $k\leq j$, then $J(\varphi(t))$ is $j$ times continuously differentiable, $(\frac{d}{dt})^{j}J(\varphi(t))\in D(A^{N-j-1})$ and $A^{N-j-1}(\frac{d}{dt})^{j}J(\varphi(t))$ is continuous. Then the solution is $N$ times strongly continuously differentiable and $\varphi^{(j)}(t)\in D(A^{N-j})$ for all $j\leq N$.

 If instead of the estimate (1) of Hypothesis  \ref{Hyp:1.1} we have the estimate for the nonlinearity:
	\begin{displaymath}
		\|A^{j}J(\varphi)\|\leq C\left (\|\varphi\|, \cdots, \|A^{j-1}\varphi\|\right ) \|A^{j}\varphi\|,	\hspace{.1in} 1\leq j\leq N.
	\end{displaymath}
	Suppose that on any finite interval $[0,T)$ where the strong solution $\varphi(t)$ of equation \ref{eqn:2.1} exists, $\|\varphi(t)\|$ is bounded. Then the strong solution exists globally in $t$ and $\varphi(t)\in D(A^{N})$ for all $t$.

	Let $J$ satisfy the Hypothesis \ref{Hyp:1.1} for all $N$ so that any strong solution of \ref{eqn:2.1} is a-priori bounded on any finite interval. If $\varphi_{0}\in \bigcap_{j=1}^{\infty}D(A^{j})$, then the solution $\varphi(t)$ of \ref{eqn:2.1} is infinitely strongly differentiable in $t$ and each time derivative has values in $\bigcap_{j=1}^{\infty}D(A^{j})$.
\end{Thm}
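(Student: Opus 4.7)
The plan is to build everything on a Banach fixed-point argument for the mild equation in the graph-domain of $A^{N}$, and then derive the remaining assertions (higher time regularity, global-in-time existence under the weaker estimate, and $C^{\infty}$ smoothness) by Gronwall-type bootstraps and induction on the order of $A$. Throughout, the structural fact to exploit is that $A$ is self-adjoint, so $e^{-iAt}$ is a strongly continuous unitary group on $H$ that commutes with each power $A^{j}$ on its domain; consequently $e^{-iAt}$ is an isometry of $D(A^{j})$ onto itself for every $j\leq N$.

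For the local part I would work on $X_{T}:=C([0,T];D(A^{N}))$ with the norm $\|\varphi\|_{T}:=\sup_{t\in[0,T]}\sum_{j=0}^{N}\|A^{j}\varphi(t)\|$ and introduce
\[
\Phi[\varphi](t):=e^{-iAt}\varphi_{0}+i\int_{0}^{t}e^{-iA(t-s)}J(\varphi(s))\,ds.
\]
Applying $A^{j}$ under the integral (legitimate by the commutation) and using unitarity yields $\|A^{j}\Phi[\varphi](t)\|\leq\|A^{j}\varphi_{0}\|+\int_{0}^{t}\|A^{j}J(\varphi(s))\|\,ds$. Summing in $j$ and invoking (1) of Hypothesis \ref{Hyp:1.1}, with $\tilde C$ a monotone majorant of the $C(\cdot)$'s, I obtain $\|\Phi[\varphi]\|_{T}\leq\sum_{j=0}^{N}\|A^{j}\varphi_{0}\|+T\,\tilde C(R)\,R$ on the closed ball $B_{R}\subset X_{T}$ of radius $R$. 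Choosing $R$ a fixed multiple of $\sum_{j}\|A^{j}\varphi_{0}\|$ and $T=T(R)$ small makes $\Phi:B_{R}\to B_{R}$; (2) of the hypothesis then gives contraction after possibly shrinking $T$ further, and Banach's theorem supplies the unique fixed point, with Gronwall on the Lipschitz bound promoting uniqueness from $B_{R}$ to all of $X_{T}$. Since $T=T(R)$ depends only on $R=R(a_{0},\ldots,a_{N})$, it is uniform on the sets $\{\|A^{j}\varphi_{0}\|\leq a_{j}\}$, and the same chain of inequalities, fed into Gronwall, yields the stated estimate with exponential factor $e^{C(T)}$.

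The remaining three assertions I would handle in turn. Higher regularity comes by induction on $j$: the extra hypothesis on $J$ combined with standard $C_{0}$-semigroup calculus gives the strong derivative $\varphi'(t)=-iA\varphi(t)+iJ(\varphi(t))\in D(A^{N-1})$ continuously, and differentiating the evolution equation $j$ more times places $\varphi^{(j+1)}=-iA\varphi^{(j)}+(d/dt)^{j}J(\varphi)$ into $D(A^{N-j-1})$ continuously. For global existence under the weaker structural estimate, the decisive feature is the one-way coupling: $\|A^{j}J(\varphi)\|$ depends only on $\|A^{l}\varphi\|$ for $l<j$. Assuming $\|\varphi(t)\|$ is a priori bounded on $[0,T)$, Gronwall bounds $\|A\varphi(t)\|$; inductively, with all lower $\|A^{l}\varphi\|$ already controlled, the coefficient in the bound on $\|A^{j}J(\varphi)\|$ is bounded on $[0,T)$, and one more Gronwall controls $\|A^{j}\varphi\|$. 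Hence $\|\varphi\|_{T}$ stays finite, so the uniform-in-data local result forbids $T$ from being maximal, extending $\varphi$ globally. The $C^{\infty}$ case follows by applying the preceding parts for every $N$.

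The main obstacle I expect is the one-way cascade in the global-existence step: one must invoke the weaker hypothesis in exactly the order that renders the nonlinear coefficients bounded before they are needed, and make rigorous the finite induction on $j$ at each stage of which the previously controlled norms are treated as frozen data inside $C(\cdot)$. Everything else reduces to standard Picard theory once the unitarity and $A^{j}$-commutation of the free propagator $e^{-iAt}$ are exploited.
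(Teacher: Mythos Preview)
The paper does not supply its own proof of this theorem: it is explicitly stated as a recalled result, with the citation to Reed \cite{Reed1975a, Reed1976} attached to the theorem header and preceded by the sentence ``In this section we will recall \cite{Reed1975a, Reed1976} the solvability results for the deterministic abstract semilinear evolution in the mild form.'' Your outline---contraction mapping on $C([0,T];D(A^{N}))$ exploiting that $e^{-iAt}$ is unitary and commutes with $A^{j}$, followed by Gronwall and an inductive cascade on $j$ for the global and regularity statements---is exactly the argument Reed gives in the cited references, so your proposal is correct and matches the intended proof.
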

\section{Laser Propagation and Generation Models}
In this section we will give formal derivations of various Laser propagation and generation models and discuss the individual mathematical characteristics of each of the models.
\subsection{Continuous and Pulse Wave Models: The Stochastic Paraxial and Klein-Gordon Equations}

The paraxial equation is a very well-known model widely used in the literature on the acoustic wave and electromagnetic wave propagation in random media \cite{Tatarski1961, Papa1973, Tap1977, Strohbehn1978, Sp2002,Sprangle2003, Gustafsson2019, Sritharan2023}. As in these papers, we derive from the Maxwell's equations a wave equation for the electric field $E(x,t)$:
\begin{equation}
	\Delta E(x,t) - \frac{1}{c^{2}} \frac{\partial^{2}}{\partial t^{2}}	(n^{2}(x,t) E(x,t))= -2\nabla (E(x,t)\cdot\nabla \log (n(x,t))), \label{eqn2.1}	
\end{equation}
where $n(x,t)$ is the refractive index of the medium and $c$ is the speed of light. We assume that the time scale of fluctuations in the medium is much slower than the light speed and invoke further simplifications based on this assumption. Thus neglecting the right hand side and also the $n^{2}$ term out of the time derivative we arrive at
\begin{equation}
	\Delta E(x,t) - \frac{n^{2}(x,t)}{c^{2}} \frac{\partial^{2}}{\partial t^{2}}	 E(x,t)=0. \label{eqn2.2}
\end{equation}
Substituting a plane wave solution $E(x_{1},x_{2},x_{3},t)=\psi(x_{1},x_{2},x_{3})\exp(ik x_{3}-i\omega t)$ and neglecting the back-scatter term  $\frac{\partial^{2}\psi(x_{1},x_{2},x_{3})}{\partial x_{3}^{2}}$ (using simple scaling argument, see for example \cite{Papa1973, Strohbehn1978}) we arrive at the nonlinear random paraxial equation:
\begin{displaymath}
	i \frac{\partial \psi(x_{1},x_{2},x_{3})}{\partial x_{3}}+ \Delta_{\bot}\psi(x_{1},x_{2},x_{3}) +\chi(x_{1},x_{2},x_{3})\psi(x_{1},x_{2},x_{3})
\end{displaymath}
\begin{equation}
	\pm
	\vert \psi(x_{1},x_{2},x_{3})\vert^{p-1}\psi(x_{1},x_{2},x_{3})=0. \label{eqn2.3}
\end{equation}
where $\Delta_{\bot} $ is the two dimensional Laplacian in the variables $x_{2},x_{3}$ and $\chi$ is a random field that depends on the medium and the field strength. 
Renaming the time-like variable $x_{3}$ as $t$ and suppressing the actual time variable $t$ we end up with the {\em two dimensional} linear or nonlinear stochastic Schr\"odinger equation:
\begin{displaymath}
	i \frac{\partial \psi(x_{1},x_{2},t)}{\partial t}+ \Delta_{\bot}\psi(x_{1},x_{2},t) +\chi(x_{1},x_{2},t)\psi(x_{1},x_{2},t)
\end{displaymath}
\begin{equation}
	\pm\vert \psi(x_{1},x_{2},t)\vert^{p-1}\psi(x_{1},x_{2},t)=0. \label{eqn2.4}
\end{equation}
On the other hand, using a wave envelop type representation 
\begin{displaymath}
	E(x_{1},x_{2},x_{3},t)=\psi(x_{1},x_{2},x_{3},t)\exp(ik x_{3}-i\omega t)
\end{displaymath}
in \ref{eqn2.1},we arrive at a time dependent random nonlinear wave equation \cite{Sprangle2003}:
\begin{equation}
	\frac{\partial^{2}}{\partial t^{2}}\psi(x,t)	-(\Delta-k_{0}^{2})\psi(x,t)+\chi(x,t)\psi(x,t)\pm\vert\psi (x,t)\vert^{p-1}\psi(x,t)=0,\label{eqn2.5}
\end{equation}
where $\chi$ is a random field that depend on the medium. Here we have also neglected certain first order derivative terms that appear in \cite{Sprangle2003} to arrive at nonlinear random Klein-Gordon type wave equation. Setting $v=\partial_{t}\psi$ we reframe the above dynamics as:
\begin{displaymath}
	\frac{\partial}{\partial t} \begin{bmatrix}
		\psi\\
		v
	\end{bmatrix}
	- \begin{bmatrix}
		0 & I\\
		-B^{2} & 0
	\end{bmatrix}
	\begin{bmatrix}
		\psi\\
		v
	\end{bmatrix}
	+
	\begin{bmatrix}
		0\\
		\pm 	\vert \psi\vert^{p-1}\psi
	\end{bmatrix}
	+
	\begin{bmatrix}
		0\\
		\chi \psi
	\end{bmatrix}
	=0,
\end{displaymath}
where $B=\sqrt{ -\Delta +k_{0}^{2}I}$.

\begin{displaymath}
	H= D(B)\oplus L^{2}(\mR^{3}) \mbox{ and } D(A)= D(B^{2})\oplus D(B).
\end{displaymath}

Then we have the following estimate \cite{Reed1976}:

\begin{Lem}
	Setting $\varphi =\langle \psi, v\rangle$ and $J(\varphi)=\langle 0, \vert \psi\vert^{2}\psi\rangle$, for $\varphi_{1},\varphi_{2}\in H$, $J$ satisfies
	\begin{enumerate}
		\item $\|J(\varphi_{1})\|\leq K\|\varphi_{1}\|^{3}$,
		\item $\|J(\varphi_{1})-J(\varphi_{2})\|\leq C(\|\varphi_{1}\|,\|\varphi_{2}\|) \|\varphi_{1}-\varphi_{2}\|$,
		\item $\|AJ(\varphi_{1})\|\leq K\|\varphi_{1}\|^{2}\|A \varphi_{1}\|$,
		\item $\|A(J(\varphi_{1})-J(\varphi_{2}))\|\leq C(\|\varphi_{1}\|,\|\varphi_{2}\|,\|A \varphi_{1}\|,\|A \varphi_{2}\|) \|A\varphi_{1}-A\varphi_{2}\|$.
	\end{enumerate}
\end{Lem}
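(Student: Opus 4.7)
The plan is to reduce each of the four inequalities to standard Sobolev embedding and Hölder estimates on $\mR^3$, exploiting the identification $\|B^j u\|_{L^2}\sim \|u\|_{H^j(\mR^3)}$ for $j=1,2$ (since $B^2=-\Delta+k_0^2 I$ with $k_0>0$ is strictly positive and self-adjoint). Accordingly, writing $\varphi=\langle\psi,v\rangle$, the energy norm is
\begin{equation*}
\|\varphi\|_H^2 = \|B\psi\|_{L^2}^2 + \|v\|_{L^2}^2 \sim \|\psi\|_{H^1}^2 + \|v\|_{L^2}^2,
\end{equation*}
and since $A\varphi=i\langle v,-B^2\psi\rangle$, one has $\|A\varphi\|_H^2=\|Bv\|_{L^2}^2+\|B^2\psi\|_{L^2}^2\sim \|v\|_{H^1}^2+\|\psi\|_{H^2}^2$. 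I would record at the outset the three ingredients I will use repeatedly: (i) the Sobolev embedding $H^1(\mR^3)\hookrightarrow L^6(\mR^3)$; (ii) the embedding $H^2(\mR^3)\hookrightarrow W^{1,6}(\mR^3)$ (via $\nabla\psi\in H^1\hookrightarrow L^6$); and (iii) the pointwise identity $|z_1|^2 z_1-|z_2|^2 z_2 = \tfrac12(|z_1|^2+|z_2|^2)(z_1-z_2)+\tfrac12(z_1+z_2)(|z_1|^2-|z_2|^2)$ for $z_1,z_2\in\mC$, which gives $\bigl||z_1|^2 z_1-|z_2|^2 z_2\bigr|\le C(|z_1|^2+|z_2|^2)|z_1-z_2|$.

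For \textbf{(1)} and \textbf{(2)} I would compute $\|J(\varphi)\|_H=\||\psi|^2\psi\|_{L^2}$ (the first component vanishes, so only the $L^2$ part contributes). By Hölder with exponents $(6,6,6)$ and $H^1\hookrightarrow L^6$,
\begin{equation*}
\||\psi|^2\psi\|_{L^2}\le \|\psi\|_{L^6}^3 \le C\|\psi\|_{H^1}^3\le C\|\varphi\|_H^3,
\end{equation*}
which gives (1). For (2), the pointwise identity combined with Hölder on $(6,6,6)$ yields
\begin{equation*}
\||\psi_1|^2\psi_1-|\psi_2|^2\psi_2\|_{L^2}\le C(\|\psi_1\|_{L^6}^2+\|\psi_2\|_{L^6}^2)\|\psi_1-\psi_2\|_{L^6}\le C(\|\varphi_1\|_H,\|\varphi_2\|_H)\|\varphi_1-\varphi_2\|_H.
\end{equation*}

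For \textbf{(3)}, since $AJ(\varphi)=i\langle |\psi|^2\psi,0\rangle$, we have $\|AJ(\varphi)\|_H=\|B(|\psi|^2\psi)\|_{L^2}\sim \||\psi|^2\psi\|_{H^1}$. The $L^2$ part is already controlled by (1), and for the gradient I would write
\begin{equation*}
\nabla(|\psi|^2\psi)=2\,\mathrm{Re}(\bar\psi\nabla\psi)\,\psi+|\psi|^2\nabla\psi,
\end{equation*}
both summands dominated pointwise by $C|\psi|^2|\nabla\psi|$. Applying Hölder on $(6,6,6)$ and using $\|\psi\|_{L^6}\le C\|\varphi\|_H$ together with $\|\nabla\psi\|_{L^6}\le C\|\psi\|_{H^2}\le C\|A\varphi\|_H$ gives
\begin{equation*}
\|\nabla(|\psi|^2\psi)\|_{L^2}\le C\|\psi\|_{L^6}^2\|\nabla\psi\|_{L^6}\le C\|\varphi\|_H^2\|A\varphi\|_H.
\end{equation*}
To absorb the $L^2$ part into the same form I use the positivity of $B$ (namely $\|\psi\|_{L^2}\le k_0^{-1}\|B\psi\|_{L^2}$) to dominate $\|\varphi\|_H\le k_0^{-1}\|A\varphi\|_H$, so that the pure $L^6$ cube from (1) is also of the required form $\|\varphi\|_H^2\|A\varphi\|_H$. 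For \textbf{(4)}, I would combine the decompositions from (2) and (3): writing $|\psi_1|^2\psi_1-|\psi_2|^2\psi_2$ and its gradient in terms of quadratic coefficients in $\psi_1,\psi_2$ times linear factors in $\psi_1-\psi_2$ or $\nabla(\psi_1-\psi_2)$, and applying Hölder on $(6,6,6)$, then bounding the quadratic factors in either $L^6$ (via $\|\varphi_j\|_H$) or $W^{1,6}$ (via $\|A\varphi_j\|_H$) and the linear factor in $L^6$ (via $\|A(\varphi_1-\varphi_2)\|_H$).

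The main technical obstacle is \textbf{(3)} and (4): carefully producing an estimate that is \emph{linear} in $\|A\varphi\|_H$ rather than quadratic. The temptation is to put $\|\nabla\psi\|_{L^6}$ onto two factors, but only one factor may cost a second derivative; the rest must stay at the $H^1$ level. The Hölder splitting $(6,6,6)$ with exactly one $L^6$-gradient factor (absorbed into $\|\psi\|_{H^2}$) and two $L^6$-value factors (absorbed into $\|\psi\|_{H^1}^2$) is what makes the estimate close with the correct exponents. In (4), similar care is needed to ensure that when the derivative lands on $\psi_1-\psi_2$, the coefficient $|\psi_j|^2$ is placed in $L^3$ (hence $\|\psi_j\|_{L^6}^2$) rather than demanding $L^\infty$ control, which is unavailable in $H^1(\mR^3)$.
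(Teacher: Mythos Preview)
Your argument is correct and is precisely the standard route: the paper does not supply its own proof but simply cites Reed \cite{Reed1976}, and Reed's treatment of the cubic Klein--Gordon nonlinearity proceeds exactly through the Sobolev embedding $H^1(\mR^3)\hookrightarrow L^6(\mR^3)$ and the H\"older splitting $(6,6,6)$ that you use. Your identification of the norms $\|\varphi\|_H\sim\|\psi\|_{H^1}+\|v\|_{L^2}$ and $\|A\varphi\|_H\sim\|\psi\|_{H^2}+\|v\|_{H^1}$, the computation $AJ(\varphi)=i\langle|\psi|^2\psi,0\rangle$, and the use of $B\ge k_0 I$ to absorb the zeroth-order piece into $\|A\varphi\|_H$ are all in order; the sketch for (4) also closes once written out, since every term in the decomposition of $\nabla(|\psi_1|^2\psi_1-|\psi_2|^2\psi_2)$ carries at most one gradient factor, placed in $L^6$ and controlled by $\|\psi_j\|_{H^2}$ or $\|\psi_1-\psi_2\|_{H^2}$.
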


\subsection{ Maxwell-Dirac Equations}
Maxwell-Dirac equations describing classical quantum electrodynamics can also be regarded as the fundamental governing equations for free electron lasers \cite{Madey2010, Becker1979} and high power microwave generation dynamics. Rigorous theory of these models in the detrministic setting has a long history \cite{Dyson1949, Gross1966, Chadam1973, Flato1987, Strauss1978}. We will give a brief overview of this model and indicate how it fits in the unified mathematical theory and stochastic analysis developed in this paper. We start with the Maxwell's equations:
\begin{displaymath}
	\nabla \cdot E =\rho, \hspace{.1in} \nabla \cdot B=0,
\end{displaymath}
\begin{displaymath}
	\nabla \times E +\partial_{t}B=0, \hspace{.1in} \nabla \times B -\partial_{t}E =J.
\end{displaymath}
Dirac equation
\begin{displaymath}
	\left ( \alpha^{\mu}D_{\mu}+ m\beta\right)\psi=0.
\end{displaymath}
Here $m\geq 0$, $\alpha^{\mu}, \beta$ are $4\times 4 $ Dirac matrices. 

$E,B:\mR^{1+3}\rightarrow \mR$ are electric field and magnetic field respectively and $\psi:\mR^{1+3}\rightarrow \mC^{4}$ is the Dirac spinner field. 
Here $\alpha $ are linear operators in spin space that satisfy $\alpha^{\mu}\alpha^{\nu}+\alpha^{\nu}\alpha^{\mu}=2 g^{\mu \nu}$. $g^{00}=1, g^{11}=-1, g^{\mu\nu}=0$ for $\nu \not = \mu$ and $\alpha^{0*}=\alpha^{0}, \alpha^{1*}=-\alpha^{1}$.

We represent the electromagnetic fields by real four dimensional vector potential $A_{\mu}$, $\mu=0,1,2,3$:
\begin{displaymath}
	B=\nabla A, \hspace{.1in} E=\nabla A_{0}-\partial_{t}A, \mbox{ with  } A=(A_{1},A_{2}, A_{3}).
\end{displaymath}
The couplings are:
\begin{displaymath}
	J^{\mu}=\langle \alpha^{\mu}\psi,\psi\rangle_{\mC^{4}},
\end{displaymath}
\begin{displaymath}
	\rho=J^{0}=\vert \psi\vert^{2},\hspace{.1in} J=(J^{1}, J^{2}, J^{3}),
\end{displaymath}
the gauge covariant derivative
\begin{displaymath}
	D_{\mu}= \partial^{(A)}_{\mu}=\frac{1}{i}\partial_{\mu} -A_{\mu}.
\end{displaymath}
The above system is invariant under the transform:
\begin{displaymath}
	\psi\rightarrow \psi'=e^{i\chi}\psi, \hspace{.1in} A_{\mu}\rightarrow A'_{\mu}=A_{\mu}+\partial_{\mu}\chi,
\end{displaymath}
for any $\chi:\mR^{1+3}\rightarrow \mR$. 

We impose Lorenz gauge condition:
\begin{displaymath}
	\partial^{\mu}A_{\mu}=0    \mbox{  which is the same as   } \partial_{0}A_{0}=\nabla\cdot A.
\end{displaymath}
This results in the Maxwell-Dirac system:
\begin{equation}
	\square A_{\mu} =(\Delta -\partial_{0}^{2})A_{\mu} =  -\langle \alpha_{\mu}\psi, \psi\rangle_{\C^{4}}, \label{eqn:2.9}
\end{equation}

\begin{equation}
	\left ( -i \alpha^{\mu}\partial_{\mu} + m\beta \right )\psi =  A_{\mu}\alpha^{\mu} \psi, \label{eqn:2.10}
\end{equation}
along with the Lorenz gauge condition.
Now denoting the operator:
\begin{displaymath}
	A_{W}=- i\begin{bmatrix}
		0 & I\\
		-B_{0}^{2} & 0
	\end{bmatrix}
\end{displaymath}
where $B_{0}=\sqrt{-\Delta+k_{0}^{2}I}$.

The Dirac operator as $D_{e}=-i \alpha\cdot\nabla + m\beta $, 
\begin{displaymath}
	D_{e}^{2}=\left(
	\begin{array}{ccccc}
		B_{e}^{2}                                    \\
		& B_{e}^{2}              &   & \text{\huge0}\\
		&               & B_{e}^{2}                 \\
		& \text{\huge0} &   & B_{e}^{2}            \\
	\end{array}
	\right),
\end{displaymath}
where $B_{e}=\sqrt{-\Delta+m^{2}I}$.

Now setting $v=(A, \partial_{t}A)$ we can write the Maxwell-Dirac system as a semilinear evolution:
\begin{equation}
	\frac{\partial}{\partial t} \begin{bmatrix}
		\psi\\
		v
	\end{bmatrix}
	+ i\begin{bmatrix}
		D_{e} & 0\\
		0 & A_{W}
	\end{bmatrix}
	\begin{bmatrix}
		\psi\\
		v
	\end{bmatrix}
	+
	\begin{bmatrix}
		-	A_{\mu}\alpha^{\mu} \psi\\
		(0,i\langle \alpha_{\mu}\psi, \psi\rangle_{\mC^{4}} +k_{0}^{2})
	\end{bmatrix}
	=0. \label{eqn:2.11}
\end{equation}
We will take escalated energy spaces \cite{Chadam1972, Chadam1973, Reed1976} as state space:
\begin{displaymath}
	H= (\oplus_{i=0}^{3}D(B_{e}^{3}))\oplus (D(B_{0}^{3}) \oplus D(B_{0}^{2})).
\end{displaymath}
Then the operator:
\begin{displaymath}
	A=
	\begin{bmatrix}
		D_{e} & 0\\
		0 & A_{W}
	\end{bmatrix}	
\end{displaymath}
is self-adjoint on
\begin{displaymath}
	D(A)= (\oplus_{i=0}^{3}D(B_{e}^{4}))\oplus (D(B_{0}^{4}) \oplus D(B_{0}^{3})).
\end{displaymath}
The properties of the source term $J$ as required in the Hypothesis \ref{Hyp:1.1} and the main theorems are verified in \cite{Chadam1972, Chadam1973, Chadam1973b,Chadam1974, Reed1976}.

\subsection{Sine-Gordon Equation}
Sine-Gordon equation is a well-known model in quantum field theory and soliton theory. In \cite{Sritharan2023} we have initiated the stochastic quantization of this model and here we indicate that the hypotheses needed for the main theorems of this paper are satisfied.

\begin{displaymath}
	\frac{\partial^{2}}{\partial t^{2}}u-\Delta u +k_{0}^{2}u -g \mbox{ sin} (u) +V(t)u=0.
\end{displaymath}
Setting $v=\partial_{t}u$ and denoting $B=\sqrt{ -\Delta +k_{0}^{2}I}$ we reframe the above dynamics as:
\begin{displaymath}
	\frac{\partial}{\partial t} \begin{bmatrix}
		u\\
		v
	\end{bmatrix}
	- \begin{bmatrix}
		0 & I\\
		-B^{2} & 0
	\end{bmatrix}
	\begin{bmatrix}
		u\\
		v
	\end{bmatrix}
	+
	\begin{bmatrix}
		0\\
		g \mbox{ sin} (u)
	\end{bmatrix}
	+
	\begin{bmatrix}
		0\\
		V(t)u
	\end{bmatrix}
	=0,
\end{displaymath}
which is in the form \ref{eqn:1.1}. Let $H=D(B)\oplus L^{2}(\mR^{n})$, and $D(A)=D(B^{2})\oplus D(B)$. We then have \cite{Reed1976}:

\begin{Lem} Setting $ \varphi=\langle u, v\rangle$, $J(\varphi)=\langle 0, \mbox{ sin} (u)\rangle$ we have the following estimates: 
	\begin{enumerate}
		\item $\|J(\varphi)\|\leq \|\varphi\|,$
		\item $\|AJ(\varphi)\|^{2}\leq K \|\varphi\|^{2}$,
		\item $\|J(\varphi)-J(\psi)\|\leq K \|\varphi-\psi\|$,
		\item $ \|A(J(\varphi)-J(\psi))\|\leq K \|\varphi-\psi\|\|A\varphi\|+\|\varphi-\psi\|$.
	\end{enumerate}
\end{Lem}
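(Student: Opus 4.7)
The plan is to exploit the fact that $J(\varphi) = \langle 0, \sin u\rangle$ lives entirely in the second slot of $H = D(B) \oplus L^2(\mR^n)$, so that on the graph norm one has $\|J(\varphi)\|_H = \|\sin u\|_{L^2}$, while from the explicit form of $A$ one reads off $A\langle 0,\sin u\rangle = \langle \sin u, 0\rangle$ (up to a sign), so $\|A J(\varphi)\|_H = \|B\sin u\|_{L^2}$. Combined with the spectral bound $k_0\|u\|_{L^2} \le \|Bu\|_{L^2} \le \|\varphi\|_H$, which comes from $B^2 = -\Delta + k_0^2 I$, this reduces the first three inequalities to pointwise facts: $|\sin|, |\cos| \le 1$ and the Lipschitz property of $\sin$.

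First I would dispose of (1)--(3). For (1), $|\sin u| \le |u|$ pointwise gives $\|J(\varphi)\|_H \le \|u\|_{L^2} \le k_0^{-1}\|\varphi\|_H$. For (2) I would use the identity $\nabla \sin u = (\cos u)\nabla u$ together with $|\cos u| \le 1$ to obtain
\begin{equation*}
\|B\sin u\|_{L^2}^2 = \|\nabla\sin u\|_{L^2}^2 + k_0^2\|\sin u\|_{L^2}^2 \le \|\nabla u\|_{L^2}^2 + k_0^2\|u\|_{L^2}^2 = \|Bu\|_{L^2}^2 \le \|\varphi\|_H^2,
\end{equation*}
which is actually slightly sharper than the stated (2). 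For (3), the classical estimate $|\sin a - \sin b|\le |a-b|$ applied pointwise yields $\|J(\varphi)-J(\psi)\|_H \le \|u_1-u_2\|_{L^2} \le k_0^{-1}\|\varphi-\psi\|_H$.

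The main work is in (4). I would start from the integral representation
\begin{equation*}
\sin u_1 - \sin u_2 = (u_1-u_2)\int_0^1 \cos\bigl(\tau u_1 + (1-\tau)u_2\bigr)\, d\tau,
\end{equation*}
and differentiate under the integral. This splits $\nabla(\sin u_1 - \sin u_2)$ into a first piece of the form $\cos(\cdot)\,\nabla(u_1-u_2)$, whose $L^2$ norm is controlled by $\|\nabla(u_1-u_2)\|_{L^2}\le \|\varphi-\psi\|_H$, and a cross piece of the shape $(u_1-u_2)\sin(\cdot)\bigl(\tau\nabla u_1 + (1-\tau)\nabla u_2\bigr)$ whose $L^2$ norm must be absorbed into the term $K\|\varphi-\psi\|_H\|A\varphi\|_H$. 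Together with the $L^2$ bound on $\sin u_1 - \sin u_2$ already obtained in (3), these two pieces combine through the definition of $\|B\cdot\|_{L^2}$ to yield (4).

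The hard part will therefore be the cross-term estimate $\|(u_1-u_2)\nabla u_i\|_{L^2}$. In low spatial dimension ($n \le 3$, which is the physically relevant regime covered by the state spaces chosen for the laser and Maxwell--Dirac models), one has the Sobolev embedding $H^2(\mR^n) \hookrightarrow L^\infty$, and hence $D(B^2) \hookrightarrow L^\infty$, so that $\|u_i\|_{L^\infty}$ is controlled by the $D(A)$-norm of the corresponding $\varphi_i$. Applying H\"older's inequality in the form $\|(u_1-u_2)\nabla u_i\|_{L^2} \le \|u_1-u_2\|_{L^\infty}\|\nabla u_i\|_{L^2}$ (and symmetrically exchanging the roles of $\varphi,\psi$) then delivers the required bound by $\|\varphi-\psi\|_H\|A\varphi\|_H$, with the understanding that the right-hand side of (4) is implicitly symmetric in $\varphi$ and $\psi$. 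The whole argument thus rests on this low-dimensional Sobolev multiplier estimate; in higher dimensions a stronger norm would be needed on the right-hand side of (4).
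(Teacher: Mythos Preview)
Your treatment of (1)--(3) is correct; the gap is in (4), specifically in how you distribute the H\"older exponents on the cross term.

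You write $\|(u_1-u_2)\nabla u_i\|_{L^2} \le \|u_1-u_2\|_{L^\infty}\|\nabla u_i\|_{L^2}$ and then claim this yields a bound by $\|\varphi-\psi\|_H\|A\varphi\|_H$. But by your own observation, controlling a function in $L^\infty$ in dimension $2\le n\le 3$ requires the $H^2=D(B^2)$ norm, which corresponds to the $D(A)$-norm. Applied to the \emph{difference} $u_1-u_2$, this gives a factor $\|A(\varphi-\psi)\|_H$, not $\|\varphi-\psi\|_H$. Meanwhile $\|\nabla u_i\|_{L^2}\le \|Bu_i\|_{L^2}\le \|\varphi_i\|_H$ is only the $H$-norm. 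So your split actually produces $\|A(\varphi-\psi)\|_H\,\|\varphi\|_H$ --- the norms land on the wrong factors, and the inequality you obtain is not (4).

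To match (4) you must reverse the roles: place the higher regularity on $u_1$ (so that $\|A\varphi\|$ appears) and keep only $H^1$ control on $u_1-u_2$ (so that $\|\varphi-\psi\|$ appears). First decompose $\nabla(\sin u_1-\sin u_2)=\cos u_2\,\nabla(u_1-u_2)+(\cos u_1-\cos u_2)\nabla u_1$ so that the cross piece involves $\nabla u_1$ only. Then, for instance in $n=3$,
\[
\|(u_1-u_2)\nabla u_1\|_{L^2}\;\le\;\|u_1-u_2\|_{L^6}\,\|\nabla u_1\|_{L^3}\;\le\;C\,\|u_1-u_2\|_{H^1}\,\|u_1\|_{H^2}\;\le\;C\,\|\varphi-\psi\|_H\,\|A\varphi\|_H,
\]
using $H^1\hookrightarrow L^6$ on the first factor and $H^1\hookrightarrow L^3$ on $\nabla u_1$ (which lies in $H^1$ because $u_1\in D(B^2)=H^2$). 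In $n=1$ your $L^\infty$ split \emph{does} work, since there $H^1\hookrightarrow L^\infty$ already and $\|u_1-u_2\|_{L^\infty}\le C\|\varphi-\psi\|_H$; but for $n=2,3$ the exponents must be shifted as above. The paper itself defers the proof to Reed's lecture notes, where the argument is precisely a Sobolev product estimate of this type.
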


\section{It\^o Calculus of the nonlinear stochastic wave equation}
In this section we will treat the mild solution form \ref{eqn:1.2} using It\^o calculus method combined with method of stochastic semilinear evolution in the spirit of \cite{DaPrato1996, DaPrato2014, Mohan2017}. The main theorem in this section is proven using techniques similar to the paper \cite{Mohan2017}.

Let us now define a stopping time $\tau_{\Lambda}$ as:
\begin{displaymath}
	\tau_{\Lambda}:=\inf\left \{ t>0; \sup_{0\leq j\leq N-1}\|A^{j}\varphi(t)\| >\Lambda \right\}.
\end{displaymath}

\begin{Thm} \label{Thm:3.1}
	Suppose $A$ and $J$ satisfy the Hypothesis \ref{Hyp:1.1}. Then there is a stopping time $\tau_{\Lambda}(\omega)>0$ such that there is a unique $\Sigma_{t}$-adapted $H$-valued stochastic process $\varphi(t,\omega), t\in (0,\tau_{\Lambda}), \omega \in \Omega$ with bound
	\begin{displaymath}
		E [	\sup_{t\in (0,\tau_{\Lambda})}\sum_{j=0}^{N}\|A^{j}\varphi(t)\|^{2}]\leq C(T, Q)E[\sum_{j=0}^{N} \|A^{j}\varphi_{0}\|^{2} ], 
	\end{displaymath}
	that satisfies 
	\begin{displaymath}
		\varphi(t)= e^{-i A t}\varphi_{0} +\int_{0}^{t}e^{-i A(t-s)}	 J(\varphi(s)) ds+\int_{0}^{t}e^{-i A(t-s)}\varphi(s) dW(s)
	\end{displaymath}
\begin{equation}
	+\int_{0}^{t}\int_{Z} e^{-i A(t-s)}\varphi(s) d\bar{\mathcal N}(s)ds, \hspace{.1in} 0\leq t\leq \tau_{\Lambda},
\end{equation}
	\begin{displaymath}
		\varphi(0)=\varphi_{0}\in H,
	\end{displaymath}
	with probability one in the time interval $(0,\tau_{\Lambda})$ and for a given $0 <\rho <1$ 
	\begin{displaymath}
		m\left \{\omega: \tau_{\Lambda}(\omega) >\rho \right\}\geq 1-\rho^{2}M,
	\end{displaymath}
	with $M$ depending on $\varphi_{0}, Q$ and independent of $\rho$. 
\end{Thm}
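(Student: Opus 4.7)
The strategy is a Picard iteration for the mild equation in the Banach space $\mathcal{X}_T := L^2(\Om; C([0,T]; \bigcap_{j=0}^N D(A^j)))$ of $\Sigma_t$-adapted processes, in the spirit of \cite{Mohan2017}. The key structural observation is that since $A$ is self-adjoint, $e^{-iAt}$ is a unitary group that commutes with every power $A^j$ on $D(A^j)$; applying $A^j$ to the mild equation therefore leaves the propagator as an isometry on the norm $\|A^j \cdot\|$, which lets us control all of the norms $\|A^j\varphi(t)\|$, $0 \leq j \leq N$, simultaneously with a single fixed-point argument.

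\textbf{Contraction with cutoff.} Because Hypothesis \ref{Hyp:1.1} is only local, I would introduce a smooth cutoff $\eta_\Lambda$ equal to $1$ on $[0,\Lambda]$ and vanishing on $[2\Lambda,\infty)$, and work with the truncated nonlinearity $J_\Lambda(\varphi) := \eta_\Lambda\bigl(\sum_{j=0}^{N-1}\|A^j\varphi\|\bigr) J(\varphi)$, together with an analogous cutoff on the coefficients of $dW$ and $d\bar{\mathcal N}$. Under this truncation Hypothesis \ref{Hyp:1.1} supplies global Lipschitz bounds in the $\mathcal{X}_T$ norm. Let $\Phi$ denote the right-hand side of the mild equation with these cutoffs. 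Applying $A^j$, taking $\sup_{t \leq T}$ and expectation, I would estimate the four terms by: (i) unitarity for the initial term, $\|A^j e^{-iAt}\varphi_0\| = \|A^j \varphi_0\|$; (ii) Hypothesis \ref{Hyp:1.1} and Cauchy-Schwarz for the drift integral; (iii) the Burkholder-Davis-Gundy inequality in the form
\begin{equation*}
\mE\sup_{t\le T}\Bigl\|\int_0^t A^j e^{-iA(t-s)}\varphi(s)\,dW(s)\Bigr\|^2 \le C\,\mathrm{Tr}\,Q\int_0^T \mE\|A^j\varphi(s)\|^2\,ds
\end{equation*}
for the Wiener integral; and (iv) Kunita's $L^2$-inequality against the L\'evy measure $\nu$ for the compensated Poisson integral. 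Summing over $j$ and invoking Gr\"onwall shows that $\Phi$ is a strict contraction on $\mathcal{X}_T$ for $T$ small enough (depending on $\Lambda$, $\mathrm{Tr}\,Q$ and $\nu$), producing a unique fixed point $\varphi_\Lambda$; iterating over time windows extends it to arbitrary $T$.

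\textbf{Stopping and probability bound.} Define $\tau_\Lambda := \inf\{t > 0 : \sup_{0 \leq j \leq N-1}\|A^j\varphi_\Lambda(t)\| > \Lambda\}$. On $[0,\tau_\Lambda)$ the cutoff $\eta_\Lambda$ equals $1$, so $\varphi := \varphi_\Lambda$ solves the original equation there; uniqueness follows from the contraction estimate applied to the difference of two solutions stopped at $\tau_\Lambda \wedge \tau'_\Lambda$. The a priori bound in the statement is recovered by running the same BDG/Gr\"onwall argument directly on the stopped process, where no cutoff is needed. For the probability estimate, subtracting $e^{-iAt}\varphi_0$ from the mild equation and estimating the three integrals on $[0,\rho]$ yields
\begin{equation*}
\mE\sup_{t\in[0,\rho]}\sum_{j=0}^{N-1}\|A^j(\varphi(t)-e^{-iAt}\varphi_0)\|^2 \le \rho^2\,\widetilde M,
\end{equation*}
where $\widetilde M$ depends on $\varphi_0, Q$ and $\nu$ but not on $\rho$ (one factor of $\rho$ from Cauchy-Schwarz on the drift term and a second from the integration domain inside BDG). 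Chebyshev's inequality, combined with unitarity of the free propagator, then delivers $m\{\tau_\Lambda \le \rho\} \le \rho^2 M$ as claimed.

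\textbf{Main obstacle.} The delicate step is propagating the higher regularity norms through the multiplicative noise at levels $j\ge 1$. Formally $A^j$ commutes with $e^{-iA(t-s)}$ and can be moved under the stochastic integral, but doing so leaves an integrand of the form $A^j(\varphi(s)\,dW(s))$ which equals $(A^j\varphi(s))\,dW(s)$ only when the multiplicative action of $W$ commutes with $A$; in general commutator terms appear that must be reabsorbed via Hypothesis \ref{Hyp:1.1} or controlled through additional structure of $Q$ and $\nu$. The analogous subtlety arises for the compensated Poisson integral. Reconciling these commutators with the available Lipschitz bounds, and making the choice of stopping index $N-1$ (rather than $N$) close the Gr\"onwall loop in the top norm, is the place where the argument is most likely to require careful bookkeeping.
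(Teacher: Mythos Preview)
Your proposal is correct and follows the same overall architecture as the paper's proof: a contraction argument in the space $E[\sup_{t\le T}\sum_{j=0}^N\|A^j\cdot\|^2]$, using unitarity of $e^{-iAt}$ to push $A^j$ through the propagator, Hypothesis~\ref{Hyp:1.1} on the drift, BDG-type stochastic convolution estimates for the Wiener integral, and the analogous maximal inequality for the compensated Poisson integral. The paper likewise defers the tail probability estimate to the argument of \cite{Mohan2018}, which is exactly your Chebyshev step.

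The one genuine methodological difference is in the localization. You truncate the nonlinearity and noise coefficients with a smooth cutoff $\eta_\Lambda$, obtain a global solution of the truncated problem, and then remove the cutoff on $[0,\tau_\Lambda)$. The paper instead works directly with the untruncated map on the closed ball $X_{T,\epsilon,\varphi_0}=\{\varphi:\|\varphi-e^{-iA\cdot}\varphi_0\|_{X_T}\le\epsilon\}$ around the free evolution, in the spirit of Reed's deterministic treatment \cite{Reed1975a,Reed1976}, and shows that ${\mathcal J}$ maps this ball into itself and is a contraction for $T$ small; the stopping time $\tau_\Lambda$ is introduced separately to make the constants in Hypothesis~\ref{Hyp:1.1} uniform. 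Your truncation approach is the more common SPDE device and is slightly more self-contained (it yields a global object before stopping), while the paper's ball-around-the-free-flow approach mirrors the deterministic local theory more closely and avoids introducing an auxiliary cutoff function. As for your ``main obstacle'' about the commutator $A^j(\varphi\,dW)$ versus $(A^j\varphi)\,dW$: the paper simply writes $\|A^j\varphi(r)\sqrt{Q}\|_{HS}^2\le(\mathrm{tr}\,Q)\|A^j\varphi(r)\|^2$ without further comment, so it is implicitly assuming the multiplicative structure of the noise is compatible with $A^j$; your caution here is well placed, but it is not a divergence from the paper so much as a point the paper leaves unexamined.
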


\begin{proof}
	For each $T>0$, let $X_{T}$ denote the set of continuous $D(A^{N})$-valued $\Sigma_{t}$-adapted stochastic processes $\varphi(\cdot)$ in the interval $[0,T)$ such that
	\begin{displaymath}
		\|\varphi(\cdot)\|_{X_{T}}^{2}:= E\left [\sup_{t\in [0,T)} \sum_{j=0}^{N}\|A^{j}\varphi(t)\|^{2}\right ]<\infty.
	\end{displaymath}
	For a fixed $\epsilon>0$, and given $\varphi_{0}\in D(A^{N})$ let
	\begin{displaymath}
		X_{T,\epsilon, \varphi_{0}}:=\left \{ \varphi(\cdot) \vert \varphi(0)=\varphi_{0},\hspace{.1in} \|\varphi(\cdot) -e^{-iA\cdot }\varphi_{0}\|_{X_{T}}\leq \epsilon \right \}.
	\end{displaymath}
	The goal would be to show that for small enough time $T$ the map ${\mathcal J}:X_{T,\epsilon, \varphi_{0}}\rightarrow X_{T,\epsilon, \varphi_{0}}$:
	\begin{equation}
		({\mathcal J}\varphi)(t)= e^{-i A t}\varphi_{0} +\int_{0}^{t}e^{-i A(t-s)}	 J(\varphi(s)) ds+\int_{0}^{t}e^{-i A(t-s)}\varphi(s) dW(s)		+\int_{0}^{t}\int_{Z} e^{-i A(t-s)}\varphi(s) d\bar{\mathcal N}(s)ds,
	\end{equation}
	is a contraction on $X_{T,\epsilon, \varphi_{0}}$.

	The following inequality is a consequence of the well-known stochastic convolution estimate \cite{DaPrato1996, DaPrato2014, vanNeerven2020}:
	\begin{displaymath}
		E\left [ \sup_{0\leq s\leq t} \vert| \int_{0}^{s}e^{-iA(t-r)}\varphi(r)dW(r)\vert|^{2}\right]\leq K E\left [\int_{0}^{t}\vert | \varphi(r)\sqrt{Q}\vert |_{HS}^{2}dr \right] 
	\end{displaymath}
	\begin{displaymath}
		\leq K (\mbox{tr}Q)E\left [\int_{0}^{t}\vert | \varphi(r)\vert |^{2}dr \right].
	\end{displaymath}
	In fact we have
	\begin{displaymath}
		E\left [ \sup_{0\leq s\leq t} \vert| \int_{0}^{s}A^{j} e^{-iA(t-r)}\varphi(r)dW(r)\vert|^{2}\right]
	\end{displaymath}
	\begin{displaymath}
		\leq K E\left [\int_{0}^{t}\vert | A^{j}\varphi(r)\sqrt{Q}\vert |_{HS}^{2}dr \right] 
	\end{displaymath}
	\begin{displaymath}
		\leq K (\mbox{tr}Q)E\left [\int_{0}^{t}\vert |A^{j} \varphi(r)\vert |^{2}dr \right]\leq K T (\mbox{tr}Q) E \left [\sup_{0\leq s\leq t}\vert |A^{j} \varphi(t)\vert^{2}\right ].
	\end{displaymath}
	Hence
	\begin{displaymath}
		\sum_{j=0}^{N}  E\left [ \sup_{0\leq s\leq t} \vert| \int_{0}^{s}A^{j} e^{-iA(t-r)}\varphi(r)dW(r)\vert|^{2}\right]  
	\end{displaymath}
	\begin{displaymath}
		\leq K T (\mbox{tr}Q)\sum_{j=0}^{N} E\left [\sup_{0\leq s\leq t}\vert |A^{j} \varphi(s)\|^{2}\right].	
	\end{displaymath}
The jump integral is estimated similarly \cite{Zhu2017, Mohan2017}
\begin{displaymath}
	E\left [ \sup_{0\leq t\leq \tau}\|\int_{0}^{t}\int_{Z}A^{j} e^{-i A(t-s)}\varphi(s) d\bar{\mathcal N}(s)ds\|^{2}\right ] \leq C E\left [ \int_{0}^{\t}\int_{Z}\|A^{j}\varphi(s)\|^{2}\nu(dz)ds\right ].
\end{displaymath}
	Now noting that due to Hypothesis \ref{Hyp:1.1} on $J$,
	\begin{displaymath}
		E\left [\|\int_{0}^{\tau_{\Lambda}}e^{-A(t-s)}A^{j}J(\varphi(s))ds\|^{2}\right ] \leq C(\Lambda,T ) E\left [\sup_{0\leq t\leq \tau_{N}}\| A^{j}\varphi(t)\|^{2}\right ],
	\end{displaymath}
	we can arrive at
	\begin{displaymath}
		E\left [\sup_{0\leq t\leq \tau_{\Lambda}}\sum_{j=0}^{N}	\|A^{j}({\mathcal J}\varphi)(t)-e^{-iAt}A^{j}\varphi_{0}\|^{2}\right ]
	\end{displaymath}
	\begin{displaymath}
		\leq C (\Lambda,\mbox{tr}Q) E\left [\sup_{t\in [0,\tau_{\Lambda})}\sum_{j=0}^{N}\|A^{j}\varphi(t)\|^{2}\right].	
	\end{displaymath}
	This completes the fixed point argument. The tail probability estimate can be obtained by arguments similar to \cite{Mohan2018}.
\end{proof}

We also recall here that as an $H$-valued square integrable random variable $\varphi(\cdot)\in L^{2}(\Omega, H)$ we have an infinite dimensional multiple Wiener expansion \cite{Alshanskiy2014}:
\begin{displaymath}
	\varphi(t)=\sum_{0}^{\infty}I_{n}(f_{n}),\hspace{.1in} f_{n}\in \hat{L}^{2}((0,\tau)^{n}:{\mathcal L}_{2}^{n}(H^{\otimes n};H)), n\in \mN,
\end{displaymath}
with
\begin{displaymath}
	E[\|\varphi\|]^{2} =\sum_{0}^{\infty}n! \|f_{n}\|^{2}_{L^{2}((0,\tau)^{n}:{\mathcal L}_{2}^{n}(H^{\otimes n};H))}.	
\end{displaymath}
where
\begin{displaymath}
	I_{n}(\zeta)=n! \int_{0}^{\tau}\int_{0}^{t_{n-1}}\cdots \int_{0}^{t_{1}}\zeta(t_{1},t_{2},\cdots, t_{n})dW(t_{1})\cdots dW(t_{n}),
\end{displaymath}
with $\zeta: [0,\tau]^{n}\times \Omega \rightarrow {\mathcal L}_{2}^{n}(H^{\otimes n};H).$ The multiple Wiener integrals satisfy the following orthogonality relations:
\begin{displaymath}
	E[(I_{n}(f), I_{m}(g))_{H}]=
	\left\{
	\begin{array}{ll}
		0  & \mbox{if } n\neq m \\
		(n!)^{2}	(f,g)_{L^{2}((0,\tau)^{n}:{\mathcal L}_{2}^{n}(H^{\otimes n};H))} & \mbox{if } m=n.
	\end{array}
	\right.
\end{displaymath} 

For the case of Levy noise a similar expansion in terms of Poisson random measures (along with Gaussian integrals above) can be developed as in \cite{Ito1956, Holden2010, Sritharan2023}.

\section{Ito calculus formulation of nonlinear filtering}
Nonlinear filtering theory for nonlinear stochastic partial differential equations of fluid mechanics and reacting and diffusing systems was initiated in \cite{Sritharan1995, Hobbs1996} and further developed for stochastic models of fluid mechanics with L\'evy noise in \cite{Sritharan2010, Sritharan2011, Fernando2013}. In \cite{Sritharan2023a} nonlinear filtering equations are derived for a class of classical and quantum spin systems. The nonlinear filtering problem for the class of nonlinear wave equations studied in this paper is formulated as follows. The key mathematical equations of  stochastic calculus method of nonlinear filtering are the Kallianpur-Striebel formula, the Fujisaki-Kallianpur-Kunita (FKK) equation, the Zakai equation and the Kunita's semigroup versions of FKK and the Zakai equation, all will be developed below. We will also establish the equivalency of FKK, Zakai with their Kunita counter parts in the spirit of Szpirglas \cite{Szpirglas1978}. We call the process $X(t)\in H,t\geq 0$ the signal process and it is governed by the evolution formulated in the earlier sections as:
\begin{equation}
	dX=(-iAX+ J(X))dt +XdM, \label{eqn5.1}
\end{equation}
\begin{displaymath}
	X(0)=x.
\end{displaymath}

The measurement process $Y(t)\in \mR^{N}, t\geq 0$ is defined as
\begin{equation}
	dY=h(X)dt + dZ, \label{eqn5.2}
\end{equation}
where $Z(t)\in \mR^{N}$ is a $N$-dimensional Wiener process which may or may not be correlated with $W(t)$. Let us denote by ${\mathcal F}^{Y}_{t}$ the filtration generated by the sensor data over the time period $0\leq s\leq t$ (sigma algebra generated by the back measurement):
\begin{displaymath}
	{\mathcal F}^{Y}_{t}= \sigma \left \{Y_{s}, 0\leq s\leq t\right \}.
\end{displaymath}
Our goal is to study the time evolution of the nonlinear filter characterized by the conditional expectation $E[f(X(t))\vert {\mathcal F}^{Y}_{t}]$, $ t\geq 0$, where $f$ is some measurable function. It is also the least square best estimate of $f(X(t))$ given the back measurements. We will first consider a special case where the signal noise and observation noise are independent. The following Bayes formula known as the Kallianpur-Striebel formula can be derived following \cite{Kallianpur1980}: Let $g$ be ${\mathcal F}^{X}_{t}$-measurable function $g(\cdot):H\rightarrow \mR$ integrable on $(\Omega, \Sigma, m)$, $0\leq t\leq T$. We may assume that processes in \ref{eqn5.2} be defined on the product space $(\Omega\times \Omega_{0}, \Sigma\times \Sigma_{0},m\times m_{0})$ where $Z$ is a Wiener process on $(\Omega_{0},\Sigma_{0}, m_{0})$. Then
\begin{equation}
	E_{m}[g\vert {\mathcal F}^{Y}_{t}] =\frac{\vartheta^{Y}(g)}{\vartheta^{Y}(1)} \label{eqn5.3}
\end{equation}
where 
\begin{displaymath}
	\vartheta^{Y}_{t}(g)=\int_{\Omega} g q_{t}dm
\end{displaymath}
with
\begin{displaymath}
	q_{t}=\exp \left \{ \int_{0}^{t} h(X(s))\cdot d Y(s) -\frac{1}{2} \int_{0}^{t}\vert h(X(s))\vert^{2}ds\right \}.
\end{displaymath}
From the Kallianpur-Striebel formula we can derive the nonlinear filtering equations of Zakai and FKK type using It\'o formula for the special case of signal and noise independent. However we can proceed as follows to get the filtering equations for the general case.

Let us define the formal infinitesimal generator ${\mathcal A}$ of the process $X(t)$ using the transition semigroup $P_{t}$ of $X(t)$ in the following way: for a function $f(\cdot):H\rightarrow \mR$, we set $P_{t}f(x)=E[f(X(t))\vert X(0)=x]$. 

Let us consider the measure space $(H, {\mathcal B}(H)) $ where ${\mathcal B}(H)$ is the Borel algebra generated by closed/open subsets of $H$. Let ${\mathcal B}_{b}(H)$ be the class of bounded ${\mathcal B}(H)$-measurable functions on $H$. For $f_{n},f \in {\mathcal B}_{b}(H)$ we say that $f_{n}\rightarrow f$ weakly if $f_{n}\rightarrow f$ pointwise and the sequence $f_{n}$ is uniformly bounded.

Let ${\mathcal D}({\mathcal A})$ be the class of functions $f$ in ${\mathcal B}_{b}(H)$ such that there exists $f_{0}\in {\mathcal B}_{b}(H)$ satisfying
\begin{equation}
	(P_{t}f)(x)=f(x) + \int_{0}^{t}(P_{s}f_{0})(x)ds, \hspace{.1in} \forall x\in H. \label{eqn5.4}
\end{equation}
Note that $f_{0}$ is uniquely determined by the above equation. Hence we set ${\mathcal A}f=f_{0}$ and formally define the extended generator ${\mathcal A}$ as:
\begin{displaymath}
	{\mathcal A}f(x):= \mbox{  weak-limit}_{t\downarrow 0^{+}}\frac{P_{t}f(x)-f(x)}{t}, \hspace{.1in} f\in D({\mathcal A}),
\end{displaymath}
where  
\begin{displaymath}
	D({\mathcal A})= \left \{f\in {\mathcal B}_{b}(H): \mbox{  weak-limit }_{t\downarrow 0^{+}}\frac{P_{t}f(x)-f(x)}{t} = {\mathcal A}f(x) \mbox{  exists} \right\}.	
\end{displaymath}
Since the domain $D({\mathcal A})$ cannot easily be defined explicitly, we will also work with a sub-class of functions known as cylindrical test functions ${\mathcal D}_{cl}$ defined as follows:
\begin{displaymath}
	{\mathcal D}_{cl} :=\left \{f \in C_{b}(H): f(X)=\varphi (\langle X, e_{1}\rangle_{H},\cdots, \langle X, e_{N}\rangle_{H} )\right \},
\end{displaymath}
where $ e_{i}\in D(A), i=1,\cdots, N$ and $ \varphi\in C^{\infty}_{0}(\mR^{N})$.

Then $D_{x}f(x)\in D(A)$ and also ${\mathcal D}_{cl}\subset D({\mathcal A})$. The formal generator of the stochastic process $X(t)$ is given by
\begin{displaymath}
	{\mathcal A}\Phi(x)=\frac{1}{2}\mbox{Tr}(x^{*}QxD^{2}_{x}\Phi(x))+\langle -iAx+J(x),D_{x}\Phi \rangle
\end{displaymath}
\begin{displaymath}
	+\int_{Z}\left (\Phi(x+\Psi(x, z))-\Phi(x) -(D_{x}\Phi, \Psi(x,z))\right )d\nu(z), \hspace{.1in} \Phi\in {\mathcal D}_{cl}. 
\end{displaymath}

We now note that the solvability theorems established in the previous section imply by It\'o's formula the following result.
\begin{Pro}
	Define 
	\begin{equation}
		M_{t}(f)=f(X(t))-f(x)-\int_{0}^{t}{\mathcal A}f(X(s))ds, \hspace{.1in} f\in {\mathcal D}_{cl}, \label{eqn5.6}
	\end{equation}
	where $X(t)$ is the process defined by the stochastic partial differential equation \ref{eqn5.1}. Then $(M_{t}(f), {\mathcal F}_{t}, m)$ is a martingale.
\end{Pro}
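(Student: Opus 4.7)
The plan is to derive the statement as a direct consequence of It\^o's formula applied to the composite function $f(X(t))$ for $f\in\mathcal{D}_{cl}$, combined with the a priori bounds from Theorem \ref{Thm:3.1}. Since $f(x)=\varphi(\langle x,e_1\rangle_H,\ldots,\langle x,e_N\rangle_H)$ with $\varphi\in C_0^\infty(\mathbb{R}^N)$ and $e_i\in D(A)$, the first two Fr\'echet derivatives $D_x f$ and $D_x^2 f$ exist, are uniformly bounded, and $D_x f(x)\in D(A)$, so all pairings $\langle -iAX+J(X),D_x f(X)\rangle$ and traces $\tr(X^*QX D_x^2 f(X))$ are well-defined along the trajectory.

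First I would invoke a suitable It\^o formula for mild solutions of semilinear SPDEs with jumps (as in \cite{DaPrato2014} in the Gaussian case, extended in \cite{Mohan2017} to the L\'evy setting) applied on the stochastic interval $[0,\tau_\Lambda]$. This gives
\begin{equation*}
f(X(t))=f(x)+\int_0^t \mathcal{A}f(X(s))\,ds + N_t^W(f) + N_t^{\bar{\mathcal N}}(f),
\end{equation*}
where the continuous martingale part is
\begin{equation*}
N_t^W(f)=\int_0^t \langle D_x f(X(s)), X(s)\,dW(s)\rangle
\end{equation*}
and the jump martingale part is the compensated Poisson stochastic integral
\begin{equation*}
N_t^{\bar{\mathcal N}}(f)=\int_0^t\!\!\int_Z \left(f(X(s^-)+\Psi(X(s^-),z))-f(X(s^-))\right) d\bar{\mathcal N}(s,z).
\end{equation*}
Reading off the drift term recovers exactly the extended generator $\mathcal{A}$ exhibited just before the proposition, so $M_t(f)=N_t^W(f)+N_t^{\bar{\mathcal N}}(f)$.

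Next I would verify that each of these stochastic integrals is a genuine (not merely local) $\Sigma_t$-martingale. For $N^W_t(f)$, since $|D_x f|$ is uniformly bounded by $\|\varphi\|_{C^1}\sum_i\|e_i\|$, the It\^o isometry combined with the moment bound
\begin{equation*}
E\!\left[\sup_{t\in(0,\tau_\Lambda)}\sum_{j=0}^N\|A^j X(t)\|^2\right]\le C(T,Q)\,E\!\left[\sum_{j=0}^N\|A^j \varphi_0\|^2\right]
\end{equation*}
from Theorem \ref{Thm:3.1} and $\tr Q<\infty$ yields $E[\langle N^W(f)\rangle_t]<\infty$. For $N_t^{\bar{\mathcal N}}(f)$, the uniform Lipschitz bound on $f$ together with the jump-integral estimate recalled in the proof of Theorem \ref{Thm:3.1} and the hypothesis on $\Psi$ give finite predictable quadratic variation, so it too is a square-integrable martingale.

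Finally, the conditional-expectation identity $E[M_t(f)-M_s(f)\mid\Sigma_s]=0$ for $0\le s\le t$ follows from the martingale property of each stochastic integral. The main obstacle is the justification of It\^o's formula in the mild-solution regime: because $X$ solves the variation-of-constants equation rather than a strong SPDE, one cannot differentiate $f(X(t))$ directly. I would circumvent this by the standard Yosida regularization $X_n(t)=R_n X(t)$ with $R_n=n(nI+A)^{-1}$, applying the classical It\^o formula to $f(X_n(t))$, passing to the limit $n\to\infty$ using the a priori bound above together with the strong resolvent convergence $R_n\to I$ on $D(A)$, and then localizing with $\tau_\Lambda$ to ensure uniform integrability of all terms. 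This extension is essentially identical to the arguments in \cite{DaPrato2014,Mohan2017}, so I would merely sketch it and refer to those sources.
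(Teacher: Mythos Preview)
Your proposal is correct and follows essentially the same approach as the paper: the paper simply invokes It\^o's formula for $f\in\mathcal{D}_{cl}$ (citing \cite{DaPrato2014}) to write $f(X(t))=f(x)+\int_{0}^{t}\mathcal{A}f(X(s))\,ds+\int_{0}^{t}(\partial_{x}f(X(s)),X(s)\,dM(s))$, so that $M_t(f)$ is the residual stochastic integral. Your argument is considerably more detailed---separating the Wiener and compensated Poisson contributions, checking square-integrability via the moment bound of Theorem~\ref{Thm:3.1}, and outlining the Yosida regularization to justify It\^o's formula for mild solutions---but the core idea is identical.
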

Nonlinear filtering for infinite dimensional semimartingale with specific application to stochastic Navier-Stokes equation forced by Levy noise has been developed in \cite{Fernando2013}. Here we will sketch the derivation for continuous semimartingales but the final filtering equations will be given for the jump semimartingale case. In fact applying It\'o formula \cite{DaPrato2014} to the stochastic process defined by \ref{eqn5.1}
\begin{displaymath}
	f(X(t))=f(x)+\int_{0}^{t}{\mathcal A}f(X(s))ds	+\int_{0}^{t}(\frac{\partial f}{\partial x}(X(s)), X(s)dM(s)), \hspace{.1in} f\in {\mathcal D}_{cl}.
\end{displaymath}
Let us now state a general martingale lemma and then specialize it to the filtering problem.
\begin{Lem} Let ${\mathcal F}_{t}$ and ${\mathcal G}_{t}$ be filtrations with ${\mathcal G}_{t}\subset {\mathcal F}_{t}$. Suppose that 
	\begin{displaymath}
		M_{t}^{\mathcal F}=U(t)-U(0)-\int_{0}^{t}V(s)ds
	\end{displaymath}
	is an ${\mathcal F}_{t}$-martingale. Then
	\begin{displaymath}
		M^{\mathcal G}_{t}=	E[U(t)\vert {\mathcal G}_{t}]-E[U(0)\vert {\mathcal G}_{0}]-\int_{0}^{t}E[V(s)\vert {\mathcal G}_{s}]ds
	\end{displaymath}
	is a ${\mathcal G}_{t}$-martingale.
\end{Lem}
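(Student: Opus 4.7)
The plan is a three-step verification that $M^{\mathcal G}_t$ is indeed a $\mathcal G_t$-martingale: adaptedness, integrability, and the conditional-expectation identity. Adaptedness is immediate because each of the three terms $E[U(t)\vert {\mathcal G}_t]$, $E[U(0)\vert {\mathcal G}_0]$, and the integral $\int_0^t E[V(s)\vert {\mathcal G}_s]\,ds$ is $\mathcal G_t$-measurable by construction (for the integral, one needs a progressive measurability version of $s\mapsto E[V(s)\vert {\mathcal G}_s]$, which follows by a standard approximation with right-continuous modifications once we assume the usual conditions on $\mathcal G_t$). Integrability in $L^1$ follows from the integrability hypotheses implicit in the $\mathcal F_t$-martingale assumption, combined with the contraction property of conditional expectation and Fubini's theorem applied to $E\int_0^t\vert V(s)\vert\,ds < \infty$.

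The heart of the argument is the martingale identity. For $0 \leq s \leq t$, I would write
\begin{displaymath}
E[M^{\mathcal G}_t - M^{\mathcal G}_s \vert {\mathcal G}_s]
= E[E[U(t)\vert {\mathcal G}_t] - E[U(s)\vert {\mathcal G}_s]\vert {\mathcal G}_s]
- \int_s^t E\bigl[E[V(r)\vert {\mathcal G}_r]\vert {\mathcal G}_s\bigr]\,dr,
\end{displaymath}
where the interchange of $E[\cdot\vert {\mathcal G}_s]$ and $\int_s^t dr$ uses Fubini. Two applications of the tower property collapse the right-hand side: since ${\mathcal G}_s \subset {\mathcal G}_t$, we have $E[E[U(t)\vert {\mathcal G}_t]\vert {\mathcal G}_s] = E[U(t)\vert {\mathcal G}_s]$, and since ${\mathcal G}_s\subset {\mathcal G}_r$ for $r\geq s$, we get $E[E[V(r)\vert {\mathcal G}_r]\vert {\mathcal G}_s] = E[V(r)\vert {\mathcal G}_s]$. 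The display therefore reduces to
\begin{displaymath}
E\left[ U(t) - U(s) - \int_s^t V(r)\,dr \;\middle\vert\; {\mathcal G}_s\right].
\end{displaymath}

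To finish, I invoke the hypothesis that $M^{\mathcal F}$ is an $\mathcal F_t$-martingale: $E[U(t) - U(s) - \int_s^t V(r)\,dr\vert {\mathcal F}_s] = 0$. Combined with the inclusion ${\mathcal G}_s \subset {\mathcal F}_s$, one more application of the tower property yields
\begin{displaymath}
E\left[ U(t) - U(s) - \int_s^t V(r)\,dr \;\middle\vert\; {\mathcal G}_s\right]
= E\left[ E\left[ U(t) - U(s) - \int_s^t V(r)\,dr \;\middle\vert\; {\mathcal F}_s\right] \;\middle\vert\; {\mathcal G}_s\right] = 0,
\end{displaymath}
which is exactly the desired martingale identity. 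I do not anticipate any substantive obstacle: the only technical care required concerns the progressive measurability of the integrand $r\mapsto E[V(r)\vert {\mathcal G}_r]$ and the applicability of Fubini, both routine under the standard filtration completeness conventions already in force for the filtering problem.
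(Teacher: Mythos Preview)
Your proof is correct and follows essentially the same route as the paper. The paper states the general lemma without proof and then proves the filtering-specific instance (the theorem immediately following) by exactly your tower-property reduction: collapse $E[E[U(t)\vert{\mathcal G}_t]\vert{\mathcal G}_s]$ and $E[E[V(r)\vert{\mathcal G}_r]\vert{\mathcal G}_s]$ to $E[U(t)\vert{\mathcal G}_s]$ and $E[V(r)\vert{\mathcal G}_s]$, recombine into $E[M^{\mathcal F}_t-M^{\mathcal F}_s\vert{\mathcal G}_s]$, and apply one more tower step through ${\mathcal F}_s$ to get zero; your additional attention to adaptedness, integrability, and the Fubini justification only makes the argument more complete.
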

We now state this property in the nonlinear filtering context:
\begin{Thm}
	Define
	\begin{equation}
		M_{t}^{Y}(f)=E[f(X(t))\vert {\mathcal F}_{t}^{Y}]-f(x) -\int_{0}^{t}E[{\mathcal A}f(X(s))\vert {\mathcal F}_{s}^{Y}]ds, \hspace{.1in} f\in {\mathcal D}_{cl}.  \label{eqn5.7}
	\end{equation}	
	Then $(M_{t}^{Y}(f), {\mathcal F}_{t}^{Y}, m)$ is a martingale.
\end{Thm}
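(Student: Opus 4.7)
The plan is to obtain the theorem as a direct corollary of the general martingale lemma stated just above it, using the proposition preceding that lemma to supply the necessary full-filtration martingale.

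First, I would set up the application: take $U(t)=f(X(t))$, $U(0)=f(x)$, $V(s)=\mathcal{A}f(X(s))$, the large filtration $\mathcal{F}_t=\Sigma_t$ (the filtration on $(\Omega\times\Omega_0,\Sigma\times\Sigma_0,m\times m_0)$ to which both $X$ and $Z$ are adapted), and the small filtration $\mathcal{G}_t=\mathcal{F}_t^Y$. The inclusion $\mathcal{F}_t^Y\subset\mathcal{F}_t$ follows at once from $dY=h(X)\,dt+dZ$, since both $X$ and $Z$ are $\Sigma_t$-adapted, so $Y$ is $\Sigma_t$-adapted and hence $\sigma(Y_s,\,0\le s\le t)\subset\Sigma_t$.

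Next, I would invoke the preceding proposition, which asserts that $M_t(f)=f(X(t))-f(x)-\int_0^t\mathcal{A}f(X(s))\,ds$ is an $(\mathcal{F}_t,m)$-martingale for every $f\in\mathcal{D}_{cl}$. This is exactly the hypothesis of the martingale lemma, with the left-hand side equal to $M_t^{\mathcal{F}}$. Since $f\in\mathcal{D}_{cl}$ is bounded (because $\varphi\in C_0^\infty(\mathbb{R}^N)$), both $f(X(t))$ and $\mathcal{A}f(X(t))$ are uniformly bounded in $(t,\omega)$ (the boundedness of $\mathcal{A}f$ on the range of the cylindrical projections follows from $e_i\in D(A)$ together with the smooth compactly supported $\varphi$, so that $D_x\Phi$, $D_x^2\Phi$ and the jump increments are all controlled), so all conditional expectations and time integrals are well defined and finite, and $U(0)=f(x)$ is deterministic, giving $E[U(0)\mid\mathcal{G}_0]=f(x)$.

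Finally, applying the lemma directly yields that $M_t^{\mathcal{G}}=E[f(X(t))\mid\mathcal{F}_t^Y]-f(x)-\int_0^t E[\mathcal{A}f(X(s))\mid\mathcal{F}_s^Y]\,ds$ is an $(\mathcal{F}_t^Y,m)$-martingale, which is precisely the claim $M_t^Y(f)$. If one wishes to prove the lemma itself rather than merely cite it, the argument is the standard tower/Fubini routine: for $s<t$, compute $E[M_t^{\mathcal{G}}-M_s^{\mathcal{G}}\mid\mathcal{G}_s]$, use the tower property $E[E[U(t)\mid\mathcal{G}_t]\mid\mathcal{G}_s]=E[U(t)\mid\mathcal{G}_s]$, apply a conditional Fubini theorem to pull $E[\,\cdot\mid\mathcal{G}_s]$ inside the time integral, and then use the tower once more through $\mathcal{G}_s\subset\mathcal{F}_s$ to reduce the expression to $E[E[M_t^{\mathcal{F}}-M_s^{\mathcal{F}}\mid\mathcal{F}_s]\mid\mathcal{G}_s]=0$.

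The main obstacle is a bookkeeping one rather than a conceptual one: rigorously justifying the conditional Fubini interchange $\int_0^t E[V(s)\mid\mathcal{G}_s]\,ds=E\bigl[\int_0^t V(s)\,ds\,\big|\,\mathcal{G}_s\bigr]$ step, which requires progressive measurability of $s\mapsto E[V(s)\mid\mathcal{G}_s]$ (obtained from optional projection of the bounded process $V(s)=\mathcal{A}f(X(s))$ onto the filtration $\mathcal{F}_\cdot^Y$) and an integrability bound, both of which are secured by the boundedness of $f$ and $\mathcal{A}f$ on $\mathcal{D}_{cl}$ together with the a.s.~finite-time existence of $X$ established in Theorem~\ref{Thm:3.1}.
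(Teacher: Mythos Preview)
Your proposal is correct and follows essentially the same approach as the paper: you invoke the preceding general martingale lemma with $U(t)=f(X(t))$, $V(s)=\mathcal{A}f(X(s))$, $\mathcal{F}_t=\Sigma_t$, $\mathcal{G}_t=\mathcal{F}_t^{Y}$, while the paper instead reproves that lemma in this specific instance via the same tower/Fubini computation you sketch in your final paragraph. The underlying argument---reducing $E[M_t^Y(f)-M_s^Y(f)\mid\mathcal{F}_s^Y]$ to $E[E[M_t(f)-M_s(f)\mid\mathcal{F}_s]\mid\mathcal{F}_s^Y]=0$---is identical.
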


\begin{proof}
	First note that
	\begin{displaymath}
		E\left [M^{Y}_{f}(f)\vert {\mathcal F}^{Y}_{s}\right ]-M^{Y}_{s}(f)=E\left [M^{Y}_{f}(f)-M^{Y}_{s}(f)\vert {\mathcal F}^{Y}_{s}\right ].	
	\end{displaymath}
	Noting that ${\mathcal F}_{t}^{Y}\subset {\mathcal F}_{t}$, we consider, for $s<t$,
	\begin{displaymath}
		E\left [M^{Y}_{f}(f)-M^{Y}_{s}(f)\vert {\mathcal F}^{Y}_{s}\right ]
	\end{displaymath}
	\begin{displaymath}
		= E\left [E[f(X(t))\vert {\mathcal F}_{t}^{Y}]-E[f(X(s))\vert {\mathcal F}_{s}^{Y}]\vert {\mathcal F}^{Y}_{s}\right ]- E\left [\int_{s}^{t}E[{\mathcal A}f(X(r))\vert {\mathcal F}_{r}^{Y}]dr\vert {\mathcal F}^{Y}_{s}\right ].
	\end{displaymath}
	Using the tower property of the conditional expectation we conclude that
	\begin{displaymath}
		E\left [M^{Y}_{f}(f)-M^{Y}_{s}(f)\vert {\mathcal F}^{Y}_{s}\right ]
	\end{displaymath}
	\begin{displaymath}
		= E[f(X(t))\vert {\mathcal F}_{s}^{Y}]-E[f(X(s))\vert {\mathcal F}_{s}^{Y}]-  \int_{s}^{t}E[{\mathcal A}f(X(r))\vert {\mathcal F}_{s}^{Y}]dr
	\end{displaymath}
	\begin{displaymath}
		=E\left [ f(X(t))-f(X(s))-\int_{s}^{t}{\mathcal A}f(X(r))dr \vert {\mathcal F}_{s}^{Y}\right ]
	\end{displaymath}
	\begin{displaymath}
		= E\left [M_{t}(f)-M_{s}(f)\vert {\mathcal F}^{Y}_{s}\right]= E\left [E\left [M_{t}(f)-M_{s}\vert{\mathcal F}_{s}\right] \vert {\mathcal F}^{Y}_{s}\right ]=0,
	\end{displaymath}
	since $M_{t}(f)$ is a $({\mathcal F}_{t},m)$- martingale. Hence $E\left [M^{Y}_{f}(f)\vert {\mathcal F}^{Y}_{s}\right ]= M^{Y}_{s}(f)$.
\end{proof}

Fujisaki-Kallianpur-Kunita (1972) \cite{Fujisaki1972}, in particular Theorem 3.1, Lemma 4.2, Theorem 4.1 and equation 6.11 in that paper, allows us to characterize $M_{t}^{Y}$ explicitly using the next set of arguments.

\begin{Def} Let the innovation process be:
	\begin{equation}
		d\nu^{Y}(t)=  dY(t)-E[h(X(t))\vert {\mathcal F}_{t}^{Y}]dt, \hspace{.1in} t\in [0,T]. \label{eqn5.8}
	\end{equation}	
	
\end{Def} 
\begin{Lem} \cite{Fujisaki1972}  
	The innovation process $(\nu^{Y}, {\mathcal F}^{Y}_{t},m)$ is an $N$-vector standard Wiener process. Furthermore ${\mathcal F}^{Y}_{s}$ and $\sigma\left \{\nu_{v}^{Y}-\nu_{u}^{Y}; s\leq u\leq v \leq T\right \}$  are independent.
\end{Lem}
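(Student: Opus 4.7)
My plan is to apply Lévy's characterization theorem: I will show that $\nu^{Y}$ is a continuous $N$-dimensional $({\mathcal F}^{Y}_{t},m)$-martingale with quadratic covariation matrix $tI_{N}$, from which the Wiener property follows. The independence clause will then be a direct consequence of the increment-independence property of Brownian motion relative to the filtration against which it is a martingale.

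First I would rewrite $\nu^{Y}$ in a form amenable to conditioning. Substituting $dY = h(X)dt + dZ$ into the definition (\ref{eqn5.8}) gives
\begin{equation*}
\nu^{Y}(t) = Z(t) + \int_{0}^{t}\bigl(h(X(s)) - E[h(X(s))\vert {\mathcal F}_{s}^{Y}]\bigr)\,ds,
\end{equation*}
which is manifestly continuous and adapted to ${\mathcal F}^{Y}_{t}$. To verify the martingale property, I would fix $s<t$ and evaluate $E[\nu^{Y}(t)-\nu^{Y}(s)\vert {\mathcal F}^{Y}_{s}]$. For the drift term, Fubini's theorem combined with the tower property (using $s<r$, so ${\mathcal F}^{Y}_{s}\subset {\mathcal F}^{Y}_{r}$) gives $E[E[h(X(r))\vert {\mathcal F}^{Y}_{r}]\vert {\mathcal F}^{Y}_{s}] = E[h(X(r))\vert {\mathcal F}^{Y}_{s}]$, so the two integrands cancel after conditioning. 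For the Wiener term, the hypothesis that $Z$ has independent increments relative to the joint filtration $\Sigma_{t}$, together with ${\mathcal F}^{Y}_{s}\subset \Sigma_{s}$, yields $E[Z(t)-Z(s)\vert {\mathcal F}^{Y}_{s}] = 0$.

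Next I would compute the $N\times N$ matrix of quadratic covariations $\langle \nu^{Y,i},\nu^{Y,j}\rangle_{t}$. The drift integral in the representation of $\nu^{Y}$ has finite variation and therefore contributes nothing to the quadratic variation; hence $\langle \nu^{Y,i},\nu^{Y,j}\rangle_{t} = \langle Z^{i},Z^{j}\rangle_{t} = \delta_{ij}\,t$. Lévy's characterization then certifies that $(\nu^{Y},{\mathcal F}^{Y}_{t},m)$ is a standard $N$-dimensional Wiener process. For the final clause, since $\nu^{Y}$ is a Brownian motion with respect to the filtration $({\mathcal F}^{Y}_{t})$, the increments $\nu_{v}^{Y}-\nu_{u}^{Y}$ for $s\le u \le v\le T$ are measurable with respect to the $\sigma$-algebra generated by $\{\nu_{r}^{Y}-\nu_{s}^{Y}:s\le r\le T\}$, which is independent of ${\mathcal F}^{Y}_{s}$ by the standard fact that Brownian increments past time $s$ are independent of the $\sigma$-field at time $s$ relative to which it is a martingale.

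The main technical obstacle is the second step above, specifically verifying $E[Z(t)-Z(s)\vert {\mathcal F}^{Y}_{s}]=0$ when the signal noise $W$ and observation noise $Z$ may be correlated, as allowed in the setup preceding (\ref{eqn5.2}). In the uncorrelated case the argument is immediate; in the correlated case one must work with the joint filtration $\Sigma_{t}$ and exploit that $Z$ remains a Wiener process on $(\Omega,\Sigma,\Sigma_{t},m)$, so $Z(t)-Z(s)$ is independent of $\Sigma_{s}$ and hence of ${\mathcal F}^{Y}_{s}\subset \Sigma_{s}$. This is precisely the technical content of Lemma 4.2 in \cite{Fujisaki1972}, and I would invoke it at the appropriate point rather than redoing the computation; the rest of the argument is the textbook Lévy-characterization derivation.
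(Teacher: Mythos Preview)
The paper does not supply its own proof of this lemma; it is stated with a bare citation to \cite{Fujisaki1972} (where it is Lemma~4.2). Your sketch via L\'evy's characterization is precisely the argument given in the original Fujisaki--Kallianpur--Kunita paper and is correct in substance.

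One small slip worth fixing: after you rewrite $\nu^{Y}(t)=Z(t)+\int_{0}^{t}\bigl(h(X(s))-E[h(X(s))\vert{\mathcal F}^{Y}_{s}]\bigr)\,ds$, you say this form is ``manifestly \ldots\ adapted to ${\mathcal F}^{Y}_{t}$.'' It is not, because $Z(t)$ itself need not be ${\mathcal F}^{Y}_{t}$-measurable. Adaptedness should instead be read off directly from the defining relation (\ref{eqn5.8}), in which both $Y$ and $E[h(X(\cdot))\vert{\mathcal F}^{Y}_{\cdot}]$ are ${\mathcal F}^{Y}_{t}$-adapted by construction. The rewritten form is the right vehicle for continuity and for the martingale and quadratic-variation computations, but not for adaptedness.
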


Let us now state an important martingale representation theorem (Theorem 3.1 \cite{Fujisaki1972}):
\begin{Thm}
	Every square integrable martingale $(M^{Y}_{t},{\mathcal F}^{Y}_{t},m)$ is sample continuous and has the representation
	\begin{equation}
		M_{t}^{Y}-M^{Y}_{0}=\int_{0}^{t}\Xi(s)\cdot d\nu^{Y}(s).	\label{eqn5.9}
	\end{equation}
	where $\nu(t)$ is the innovation process and $\Xi(s)=(\Xi_{s}^1, \cdots,\Xi_{s}^{N})	$ is jointly measurable and adapted to ${\mathcal F}^{Y}_{s}$.	
\end{Thm}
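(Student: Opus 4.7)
The plan is to follow the reference measure (Girsanov) approach of Fujisaki, Kallianpur and Kunita, which converts the ${\mathcal F}^{Y}_{t}$-martingale representation problem under $m$ into a classical Brownian representation problem under a tilted measure $\tilde m$, and then transports the representation back via the Kallianpur-Striebel formula already developed in this section.

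First I would introduce a reference probability measure $\tilde m$ on $(\Omega,\Sigma)$ by setting $d\tilde m/dm = q_T^{-1}$, where $q_t$ is the exponential density in the Kallianpur-Striebel formula. Assuming a Novikov-type integrability condition on $h(X(\cdot))$ so that $q_t$ is a uniformly integrable $m$-martingale, Girsanov's theorem gives that under $\tilde m$ the observation process $Y(t)$ is an $N$-dimensional standard Wiener process which is independent of the signal $X$. Consequently $({\mathcal F}^{Y}_{t})$ is a Brownian filtration under $\tilde m$, and the classical It\^o martingale representation theorem applies: every square-integrable $({\mathcal F}^{Y}_{t},\tilde m)$-martingale $\tilde M_t$ admits an integral representation $d\tilde M_t = \tilde \Xi(t)\cdot dY(t)$ with a continuous modification.

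Next, given a square-integrable $({\mathcal F}^{Y}_{t},m)$-martingale $M^{Y}_{t}$, I would pass to the auxiliary process $\tilde M_t := M^{Y}_{t}\, q_t^{-1}$, or equivalently use Bayes' rule, and verify from the tower property that $\tilde M_t$ is an $({\mathcal F}^{Y}_{t},\tilde m)$-martingale; square integrability under $\tilde m$ is inherited from the $L^2(m)$ hypothesis together with integrability of $q_t^{-1}$. The Brownian representation under $\tilde m$ yields $d\tilde M_t = \tilde\Xi(t)\cdot dY(t)$. Applying the It\^o product rule to $M^{Y}_{t} = \tilde M_t\, q_t$ and invoking the definition of the innovation $d\nu^{Y}(t) = dY(t) - E[h(X(t))\vert {\mathcal F}^{Y}_{t}]dt$, the $dY$ term and the martingale part of $dq_t$ recombine so that the finite variation contributions cancel, leaving a clean stochastic integral $dM^{Y}_{t} = \Xi(t)\cdot d\nu^{Y}(t)$ with the integrand $\Xi$ identifiable in terms of $\tilde\Xi$, $q_t$ and $E[h(X)\vert {\mathcal F}^{Y}_{t}]$. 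Sample continuity of $M^{Y}_{t}$ follows automatically from the continuity of $Y$ (hence of $\nu^{Y}$) and of the stochastic integral against it.

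The main obstacle I anticipate is the justification of the Girsanov change of measure in this infinite-dimensional semimartingale setting. Because the signal equation is driven by both an $H$-valued Wiener process and a compensated Poisson component, and $h(X(\cdot))$ is a nonlinear functional of the Hilbert space valued state, showing that $q_t$ is a true uniformly integrable martingale rather than merely a local martingale requires either boundedness of $h$ along trajectories or a Novikov/Kazamaki-type estimate that leverages the a priori moment bound from Theorem \ref{Thm:3.1}. A secondary technical point is the joint measurability and predictability of $\Xi(s,\omega)$, which I would establish by a standard monotone class or optional projection argument on finite intervals $[0,T]$ and then take $T\to\infty$.
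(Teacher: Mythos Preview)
The paper does not supply its own proof of this theorem; it is quoted verbatim from Theorem~3.1 of Fujisaki--Kallianpur--Kunita (1972) and used as a black box. So the only benchmark is the original FKK argument.

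Your reference-measure (Girsanov) route is a standard alternative, but it is \emph{not} the FKK proof and it does not match the hypotheses actually in force here. FKK work entirely under the physical measure $m$: having already shown that the innovation $\nu^{Y}$ is an $({\mathcal F}^{Y}_{t},m)$-Brownian motion, they prove the representation by showing that the closed linear span of stochastic integrals $\int_{0}^{t}\Xi\cdot d\nu^{Y}$ exhausts the square-integrable $({\mathcal F}^{Y}_{t},m)$-martingales, via a density/approximation argument that uses only the integrability condition $\int_{0}^{T}\vert h(X(t))\vert^{2}\,dt<\infty$ a.s., i.e.\ exactly condition (\ref{eqn5.10}) of this paper. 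Your approach, by contrast, needs $q_{t}^{-1}$ to be a true uniformly integrable martingale, hence a Novikov/Kazamaki bound on $h(X(\cdot))$; the linear-growth assumption on $h$ together with the second-moment estimate of Theorem~\ref{Thm:3.1} does not give exponential moments, so in the generality of this paper the measure change is not justified. This is a genuine gap, not a technicality.

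There is also a concrete error in your transport step. The process $q_{t}$ is ${\mathcal F}_{t}$-adapted (it depends on the signal $X$), not ${\mathcal F}^{Y}_{t}$-adapted, so $\tilde M_{t}:=M^{Y}_{t}\,q_{t}^{-1}$ is not ${\mathcal F}^{Y}_{t}$-measurable and cannot be the $({\mathcal F}^{Y}_{t},\tilde m)$-martingale you want. The correct Radon--Nikodym density on ${\mathcal F}^{Y}_{t}$ is $E_{\tilde m}[q_{t}\mid{\mathcal F}^{Y}_{t}]=\vartheta^{Y}_{t}(1)$, and the Bayes transformation reads $\tilde M_{t}=M^{Y}_{t}\,\vartheta^{Y}_{t}(1)$. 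With this correction the It\^o-product computation you sketch does go through, but only once the Girsanov step is legitimate, which brings you back to the integrability obstacle above.
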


\begin{Lem} \cite{Fujisaki1972}  Let $(M_{t}(f), {\mathcal F}_{t},m)$ be the square integrable martingale. Then there exists unique sample continuous process $\langle \langle M(f), Z^{i}\rangle\rangle_{t}, (i=1,\cdots,N)$ adapted to ${\mathcal F}_{t}$ such that almost all sample functions are of bounded variation and $M_{t}(f)-\langle\langle M(f), Z^{i}\rangle\rangle_{t}$ are ${\mathcal F}_{s}$-martingales. Furthermore each $\langle\langle M(f), Z^{i}\rangle\rangle_{t}$ has the following properties: it is absolutely continuous with respect to Lebesgue measure in $[0,T]$. There exists a modification of the Radon-Nikodym derivative which is $(t,\omega)$-measurable and adapted to ${\mathcal F}_{t}$ and which shall denote by $\tilde{D}_{t}^{i}f(\omega)$. Then using the vector notation $\tilde{D}_{t}f=(\tilde{D}_{t}^{1}f,\cdots, \tilde{D}_{t}^{N}f)$,
	\begin{displaymath}
		\langle\langle M(f),Z \rangle\rangle_{t} =\int_{0}^{t}\tilde{D}_{s}fds, \hspace{.1in} \mbox{ a.s},
	\end{displaymath}
	where
	\begin{displaymath}
		\int_{0}^{T}E\vert \tilde{D}_{s}f\vert^{2}ds <\infty.
	\end{displaymath}
	If the process $X(t)$ and $Z(t)$ are independent then 
	\begin{displaymath}
		\langle\langle M(f),Z \rangle\rangle_{t}=0, \mbox{ a.s}.
	\end{displaymath}
\end{Lem}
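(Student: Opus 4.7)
The plan is to obtain $\langle\langle M(f),Z^i\rangle\rangle_t$ as the unique continuous finite-variation part of the Doob--Meyer decomposition of the product martingale $M_t(f)Z^i_t$, then use the Wiener structure of $Z$ to extract an adapted, square-integrable Radon--Nikodym density, and finally exploit uniqueness to handle the independent case. First I would note that $M_t(f)$ is a continuous square-integrable $\mathcal{F}_t$-martingale (continuity inherited from the mild solution $\varphi$ of Theorem \ref{Thm:3.1} in the continuous-noise case) and each $Z^i_t$ is a component of a standard Wiener process, so $M_t(f)Z^i_t$ is a continuous special semimartingale. The Doob--Meyer / Kunita--Watanabe theorem then yields a unique continuous $\mathcal{F}_t$-adapted process $\langle\langle M(f),Z^i\rangle\rangle_t$ of locally bounded variation, vanishing at zero, such that $M_t(f)Z^i_t-\langle\langle M(f),Z^i\rangle\rangle_t$ is an $\mathcal{F}_t$-martingale (the statement as written in the lemma apparently omits the $Z^i_t$ factor). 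Uniqueness follows because a continuous martingale of bounded variation starting at zero must vanish identically.

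Next I would establish absolute continuity via the Kunita--Watanabe inequality: for any Borel $A\subset[0,T]$,
\begin{displaymath}
\left|\int_A d\langle\langle M(f),Z^i\rangle\rangle_s\right|^2 \leq \int_A d\langle\langle M(f)\rangle\rangle_s\cdot\int_A d\langle Z^i\rangle_s = |A|\int_A d\langle\langle M(f)\rangle\rangle_s,
\end{displaymath}
using $\langle Z^i\rangle_s=s$; hence the cross-variation is absolutely continuous with respect to Lebesgue measure almost surely. A Radon--Nikodym density exists pathwise, and an $(s,\omega)$-measurable $\mathcal{F}_s$-adapted version $\tilde{D}^i_s f$ is obtained by taking the $\limsup$ of difference quotients of the continuous adapted process $\langle\langle M(f),Z^i\rangle\rangle$ along a countable dense set of mesh sizes. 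The differential form of the Kunita--Watanabe inequality yields $|\tilde{D}^i_s f|^2\,ds\leq d\langle\langle M(f)\rangle\rangle_s$ as measures, whence
\begin{displaymath}
\int_0^T E\bigl[|\tilde{D}^i_s f|^2\bigr]\,ds \leq E\bigl[\langle\langle M(f)\rangle\rangle_T\bigr]<\infty
\end{displaymath}
by the square integrability of $M(f)$.

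For the independence case, I would show directly that $M_t(f)Z^i_t$ is already an $\mathcal{F}_t$-martingale, which by the uniqueness in the first step forces $\langle\langle M(f),Z^i\rangle\rangle_t\equiv 0$. Writing $\mathcal{F}_t=\mathcal{F}^X_t\vee\mathcal{F}^Z_t$ with $\mathcal{F}^X$ and $\mathcal{F}^Z$ independent, for $s<t$ one has
\begin{displaymath}
E\bigl[Z^i_t\,\big|\,\mathcal{F}^X_t\vee\mathcal{F}^Z_s\bigr]=E[Z^i_t\mid\mathcal{F}^Z_s]=Z^i_s,
\end{displaymath}
by independence of $Z$ from $X$. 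Conditioning first on $\mathcal{F}^X_t\vee\mathcal{F}^Z_s$ and then using the $\mathcal{F}^X_t$-measurability of $M_t(f)$ yields $E[M_t(f)Z^i_t\mid\mathcal{F}_s]=Z^i_s M_s(f)$, as required. The main technical obstacle is the passage from the a.s. absolute continuity in the second step to a jointly measurable, $\mathcal{F}_t$-adapted density $\tilde{D}^i_s f$: the pathwise Radon--Nikodym derivatives are only defined up to $\omega$-dependent Lebesgue-null sets, and one must verify that the adapted difference-quotient construction both produces a progressively measurable object and coincides with the true density Lebesgue-a.e. for almost every $\omega$.
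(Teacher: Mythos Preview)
The paper does not prove this lemma; it is quoted verbatim from Fujisaki--Kallianpur--Kunita \cite{Fujisaki1972} (specifically their Lemma 4.2) and stated without argument. Your outline is the standard route taken in that reference: existence and uniqueness of the cross-variation via the Doob--Meyer decomposition of $M_t(f)Z^i_t$, absolute continuity from the Kunita--Watanabe inequality together with $\langle Z^i\rangle_t=t$, and the independence case by checking directly that the product is already a martingale. Your observation that the displayed condition ``$M_t(f)-\langle\langle M(f),Z^i\rangle\rangle_t$ is a martingale'' should read ``$M_t(f)Z^i_t-\langle\langle M(f),Z^i\rangle\rangle_t$'' is correct; this is a transcription slip in the paper. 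The one place where your sketch is slightly loose is the claim that $M_t(f)$ is continuous: in the paper's general setting the signal noise $M$ may contain a compensated Poisson component, so $M_t(f)$ need only be c\`adl\`ag, but this does not affect the argument since $Z^i$ is continuous and the cross-variation of any square-integrable martingale with a Brownian motion is still continuous and absolutely continuous. The measurable-selection issue you flag for $\tilde D^i_s f$ is genuine but is handled in \cite{Fujisaki1972} exactly as you propose, via adapted difference quotients.
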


The following theorem characterizes $\Xi$ explicitly:
\begin{Thm}
	Let $f\in D({\mathcal A})$ and 
	\begin{equation}
		\int_{0}^{T}\vert h(X(t))\vert^{2}dt<\infty, \hspace{.1in} \mbox{ a.s.}\label{eqn5.10}
	\end{equation}
	Then the evolution of the conditional expectation $	E[f(X(t))\vert {\mathcal F}_{t}^{Y}]$ (the nonlinear filter) is characterized by the Fujisaki-Kallianpur-Kunita equation:
	\begin{displaymath}
		E[f(X(t))\vert {\mathcal F}_{t}^{Y}]=	E[f(X(0))\vert {\mathcal F}_{0}^{Y}] + \int_{0}^{t}E[{\mathcal A}f(X(s))\vert {\mathcal F}_{s}^{Y}]ds 
	\end{displaymath}
	\begin{displaymath}
		+	\int_{0}^{t} \left \{E[f(X(s))h(X(s))\vert {\mathcal F}_{s}^{Y}] -E[f(X(s))\vert {\mathcal F}_{s}^{Y}]E[h(X(s))\vert {\mathcal F}_{s}^{Y}]	\right \}\cdot d\nu^{Y}(s)
	\end{displaymath}
	\begin{equation}	
		+	\int_{0}^{t} \left \{ E[\tilde{D}_{s}f(X(s))\vert {\mathcal F}_{s}^{Y}]\right \}\cdot d\nu^{Y}(s), \label{eqn5.11}
	\end{equation}
	where  $\tilde{D}_{s}f(X(s))$ is given by
	\begin{displaymath}
		\langle\langle M(f),Z \rangle\rangle_{t} = \int_{0}^{t}\tilde{D}_{s}f ds.
	\end{displaymath}
	In particular $\tilde{D}_{t}f =0$ if $W$ and $Z$ are independent. Moreover, if 
	\begin{displaymath}
		BW=\sigma_{1}W_{1}+\sigma_{2}Z,
	\end{displaymath}
	then 
	\begin{displaymath}
		\tilde{D}_{t}f=\sigma_{2}^{*}\frac{\partial f}{\partial x}.
	\end{displaymath}
	
\end{Thm}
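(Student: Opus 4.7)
The plan is to invoke the martingale representation theorem just stated, applied to the $\mathcal{F}_t^Y$-martingale $M_t^Y(f)$ of the preceding theorem, and then to identify the integrand by computing a single cross-variation in two independent ways. Under the hypothesis $\int_0^T|h(X(s))|^2\,ds<\infty$ together with $f\in D(\mathcal{A})$ and the a priori bounds of Theorem~\ref{Thm:3.1} (localized via the stopping times $\tau_\Lambda$ to guarantee the square integrability required by the representation), one obtains
\begin{equation*}
M_t^Y(f)=\int_0^t \Xi(s)\cdot d\nu^Y(s)
\end{equation*}
for some $\mathcal{F}_t^Y$-predictable $N$-vector $\Xi$, and hence $\langle M^Y(f),\nu^Y\rangle_t=\int_0^t \Xi(s)\,ds$.

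Next I compute the same bracket by a direct route. Since $\nu^Y-Y=-\int_0^\cdot E[h(X(s))\,|\,\mathcal{F}_s^Y]\,ds$ has bounded variation, $\langle M^Y(f),\nu^Y\rangle_t=\langle M^Y(f),Y\rangle_t$. Using the It\^o decomposition $f(X(t))=f(x)+\int_0^t\mathcal{A}f(X(s))\,ds+M_t(f)$ guaranteed by the earlier Proposition, the finite-variation contributions in both $M_t^Y(f)$ and $Y$ drop out of the bracket, reducing the calculation to a cross-bracket with $M(f)$. Testing against bounded $\mathcal{F}_t^Y$-predictable processes and invoking the tower property together with a Fubini interchange produces two contributions: one coming from $dY=h(X)\,ds+dZ$ whose absolutely continuous piece yields $E[f(X(s))h(X(s))\,|\,\mathcal{F}_s^Y]$ after recentering by $E[f(X(s))\,|\,\mathcal{F}_s^Y]\,E[h(X(s))\,|\,\mathcal{F}_s^Y]$ (this recentering is dictated by the martingale property of $M^Y(f)$ against the innovation), and one coming from $\langle M(f),Z\rangle_t=\int_0^t \tilde D_s f\,ds$ supplied by the Fujisaki-Kallianpur-Kunita lemma stated above. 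Equating the two expressions gives
\begin{equation*}
\Xi(s)=E[f(X(s))h(X(s))\,|\,\mathcal{F}_s^Y]-E[f(X(s))\,|\,\mathcal{F}_s^Y]\,E[h(X(s))\,|\,\mathcal{F}_s^Y]+E[\tilde D_s f\,|\,\mathcal{F}_s^Y],
\end{equation*}
which upon substitution into the definition of $M_t^Y(f)$ produces the FKK equation as stated. The independent case then follows from the final assertion of the quoted lemma ($\tilde D_s f\equiv 0$), while the specialization $\tilde D_s f=\sigma_2^*\,\partial f/\partial x$ under $BW=\sigma_1 W_1+\sigma_2 Z$ follows by a short It\^o computation on cylinder test functions $f\in\mathcal{D}_{cl}$ together with the polarization identity for the stochastic It\^o integral.

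The main obstacle I expect is the rigorous justification of the projected bracket identity, because $E[f(X(\cdot))\,|\,\mathcal{F}_\cdot^Y]$ is not a priori a pathwise semimartingale, so the reduction $\langle M^Y(f),Y\rangle_t=\langle M(f),Z\rangle_t+\text{conditional drift terms}$ has to be read through expectation pairings $E[M_t^Y(f)\,\zeta_t]$ against $\mathcal{F}_t^Y$-martingales $\zeta$ built from $\nu^Y$, rather than via a pathwise It\^o formula for the filter. I plan to handle this by first establishing the identity for $f\in\mathcal{D}_{cl}$, where the cylindrical gradient $D_xf\in D(A)$ makes all brackets classical, then extending to $f\in D(\mathcal{A})$ by a weak-limit density argument using the weak-closability of $\mathcal{A}$ recorded after \eqref{eqn5.4}, and finally removing the localization at $\tau_\Lambda$ by means of the tail estimate $m\{\tau_\Lambda>\rho\}\geq 1-\rho^2 M$ of Theorem~\ref{Thm:3.1}.
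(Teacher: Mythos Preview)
Your proposal is correct and follows the same approach as the paper. The paper does not give an explicit proof of this theorem; instead it assembles the building blocks from \cite{Fujisaki1972} (the innovation lemma, the martingale representation theorem, and the lemma identifying $\langle\langle M(f),Z\rangle\rangle_t=\int_0^t\tilde D_sf\,ds$) and then states the result as the explicit characterization of the integrand $\Xi$, deferring the actual identification to ``Theorem 3.1, Lemma 4.2, Theorem 4.1 and equation 6.11'' of that reference. Your sketch reproduces precisely that argument: represent $M_t^Y(f)$ against the innovation, then identify $\Xi$ by computing the covariation with $Y$ through expectation pairings against $\mathcal F_t^Y$-predictable test processes and the tower property. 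The obstacle you flag---that the bracket computation must go through expectations rather than a pathwise It\^o formula for the filter---is exactly the point where FKK's Lemma 4.2/Theorem 4.1 do the work, so your plan matches the cited source. The localization via $\tau_\Lambda$ and the density extension from $\mathcal D_{cl}$ to $D(\mathcal A)$ are reasonable technical additions that the paper does not spell out.
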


Note that if $h$ has a growth condition
\begin{displaymath}
	\vert h(x)\vert \leq C \vert | x\vert |,  \hspace{.1in} x\in H,
\end{displaymath}
then by theorem \ref{Thm:3.1} the condition \ref{eqn5.10} is satisfied.

\subsubsection{Existence and Uniqueness of Measure-valued solutions to Filtering Equations}

A theorem of Getoor \cite{Getoor1975} provides the existence of a random measure $\pi_{t}^{Y}$ that is measurable with respect to ${\mathcal F}_{t}^{Y}$ such that 
\begin{displaymath}
	E[f(X(t))\vert {\mathcal F}_{t}^{Y}]=\pi_{t}^{Y}[f] =\int_{H}f(\zeta)\pi_{t}^{Y}(d\zeta).
\end{displaymath}

Substituting in \ref{eqn5.11}, the probability measure-valued process $\pi_{t}^{Y}[\cdot]$ satisfies the  Fujisaki-Kallianpur-Kunita (FKK) type equation: 
\begin{equation}
	d\pi_{t}^{Y}[f]=\pi_{t}^{Y}[{\mathcal A}f]dt +\left(\pi_{t}^{Y}[hf+\tilde{D}_{t}f]-\pi_{t}^{Y}[h]\pi_{t}^{Y}[f]\right)\cdot d\nu^{Y}(t), \hspace{.1in} \mbox{ for } f\in {\mathcal D}_{cl}. \label{eqn5.12}
\end{equation}	
If we set 
\begin{equation}
	\vartheta_{t}^{Y}[f]:= \pi_{t}^{Y}[f]\exp\left \{\int_{0}^{t}\pi_{t}^{Y}[h]\cdot dY(s) -\frac{1}{2}\int_{0}^{t}\vert \pi_{t}^{Y}[h]\vert^{2}ds\right \},\label{eqn5.13}
\end{equation}	
then the measure-valued process  $\vartheta_{t}^{y}[\cdot]$ satisfies the Zakai type equation:
\begin{equation}
	d\vartheta_{t}^{Y}[f]=\vartheta_{t}^{Y}[{\mathcal A}f]dt +\vartheta_{t}^{Y}[hf+\tilde{D}_{t}f]\cdot dY(t), \mbox{ for } f\in {\mathcal D}_{cl}. \label{eqn5.14}
\end{equation}	
Such measure-valued evolutions were first derived by Kunita \cite{Kunita1971} in the context of compact space valued signal processes.

One method of proving uniqueness of measure-valued solutions to filtering equations is to start with the unique solution of the backward Kolmogorov equation similar to the papers in the case of nonlinear filtering of stochastic Navier-Stokes equation \cite{Sritharan1995} or for the case of stochastic reaction diffusion equation \cite{Hobbs1996}.  Here we use the solution $\Phi(t)$ of the backward Kolmogorov equation with initial data $\Phi$ and express the Zakai equation as:
\begin{equation} 
	\vartheta_{t}^{Y}[\Psi]=\vartheta_{0}^{Y}[\Phi(t)]+ \int_{0}^{t} \vartheta_{s}^{Y}[ (\frac{\partial}{\partial s} +{\mathcal A})\Phi(s)]ds 
	+ \int_{0}^{t} \left(\vartheta_{s}^{Y}[(h+\tilde{D}_{t})\Phi(s)]\right)\cdot d\nu_{s}, \label{eqn5.15}
\end{equation}	
and utilize the fact the $\Phi$ satisfies $(\frac{\partial}{\partial s} +{\mathcal A})\Phi(s)=0$ in the above equation (eliminating the integral involving the generator ${\mathcal A})$ and then proceed to prove the uniqueness of the random measures $\vartheta_{t}^{Y}$ as well as  $\pi_{t}^{Y}$. 

We now specialize to the case of signal noise $W$ and observation noise $Z$ being independent. In this setting we can use the results of Kunita \cite{Kunita1971} and Szpirglas\cite{Szpirglas1978} to obtain equivalent nonlinear filtering equation that do not explicitly involve the generator ${\mathcal A}$ of the signal Markov process $X$.

Uniqueness theorem relies on the following lemma \cite{Szpirglas1978, Kallianpur1988}:
\begin{Lem}
	Let us define a subclass of ${\mathcal D}({\mathcal A})$ as:
	\begin{displaymath}
		{\mathcal D}_{2}({\mathcal A}):= \left \{ f\in {\mathcal D}({\mathcal A}): {\mathcal A}f \in {\mathcal D}({\mathcal A}) \right \}.	
	\end{displaymath}
	Let $\mu_{1},\mu_{2}\in {\mathcal M}(H)$ be finite measures on $(H, {\mathcal B}(H))$ such that
	\begin{equation}
		\langle \mu_{1}, f\rangle =\langle \mu_{2}, f\rangle, 	\hspace{.2in} \forall f\in {\mathcal D}_{2}({\mathcal A}), \label{eqn5.16}
	\end{equation}
	where $\langle \mu, f\rangle = \int_{H} f(x)d \mu(x).$ Then $\mu_{1}=\mu_{2}\in {\mathcal M}(H)$.
\end{Lem}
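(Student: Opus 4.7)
The plan is to exploit the resolvent of the transition semigroup $P_t$ of the signal process $X(t)$ to manufacture enough test functions in $\mathcal{D}_2(\mathcal{A})$ to determine a finite measure on $(H,\mathcal{B}(H))$. For $\lambda>0$ define
$$R_\lambda g(x) := \int_0^\infty e^{-\lambda t} P_t g(x)\, dt, \qquad g\in \mathcal{B}_b(H).$$
Standard semigroup calculus (identical to the derivation of (\ref{eqn5.4}), integrated against $e^{-\lambda t}$) shows that $R_\lambda: \mathcal{B}_b(H)\to \mathcal{D}(\mathcal{A})$ with $(\lambda I - \mathcal{A})R_\lambda g = g$, so in particular $\mathcal{A} R_\lambda g = \lambda R_\lambda g - g \in \mathcal{B}_b(H)$. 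Iterating the resolvent, for every $g\in \mathcal{B}_b(H)$,
$$f_{\lambda,g} := \lambda^2 R_\lambda^2 g = \lambda^2 R_\lambda(R_\lambda g) \in \mathcal{D}(\mathcal{A}),\qquad \mathcal{A}f_{\lambda,g} = \lambda^2\bigl(\lambda R_\lambda^2 g - R_\lambda g\bigr)\in \mathcal{D}(\mathcal{A}),$$
so $f_{\lambda,g}\in \mathcal{D}_2(\mathcal{A})$.

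Plug these test functions into the hypothesis (\ref{eqn5.16}) to obtain
$$\int_H f_{\lambda,g}(x)\, d\mu_1(x) = \int_H f_{\lambda,g}(x)\, d\mu_2(x),\qquad \forall\, g\in \mathcal{B}_b(H),\ \lambda>0.$$
Rewriting $\lambda R_\lambda g(x) = \int_0^\infty e^{-s} P_{s/\lambda}g(x)\, ds$ makes it manifest that $\|\lambda R_\lambda g\|_\infty \le \|g\|_\infty$, and pointwise continuity of the transition semigroup at $t=0$ (first verified on the point-separating class $\mathcal{D}_{cl}$ and then extended by a monotone class / bounded convergence argument) yields $\lambda R_\lambda g(x)\to g(x)$ as $\lambda\to\infty$. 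Applying this twice gives $f_{\lambda,g}(x)\to g(x)$ pointwise with a uniform bound $\|g\|_\infty$. Since $\mu_1,\mu_2$ are finite, the dominated convergence theorem passes to the limit and yields $\int_H g\, d\mu_1 = \int_H g\, d\mu_2$ for every $g\in \mathcal{B}_b(H)$, whence $\mu_1 = \mu_2$.

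The principal obstacle is the pointwise (bp)-continuity $\lambda R_\lambda g\to g$ for arbitrary bounded Borel $g$ on $H$. Because $P_t$ is not a $C_0$-semigroup on $\mathcal{B}_b(H)$ in operator norm, one cannot use an off-the-shelf Hille–Yosida argument. Instead, one works within the weak (bounded pointwise) topology: verify the convergence first for $g\in \mathcal{D}_{cl}$ using the mild solvability theorem \ref{Thm:3.1} together with the It\^o formula representation in Proposition \ref{eqn5.6} to control $|P_tg(x) - g(x)|$ as $t\downarrow 0$; then use the Dynkin $\pi$-$\lambda$ theorem (or a monotone class argument) to pass the convergence from the point-separating algebra $\mathcal{D}_{cl}$ to all of $\mathcal{B}_b(H)$. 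Once this bp-continuity is in place, the rest of the argument is mechanical.
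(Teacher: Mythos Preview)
Your resolvent-iteration strategy is exactly the classical route (the paper itself does not prove this lemma but cites \cite{Szpirglas1978, Kallianpur1988}, where precisely this technique appears). The gap is in your final paragraph: the set of $g\in\mathcal{B}_b(H)$ for which $P_tg(x)\to g(x)$ as $t\downarrow 0$ (equivalently $\lambda R_\lambda g(x)\to g(x)$) is \emph{not} closed under bounded pointwise limits, so no monotone-class or $\pi$--$\lambda$ argument can push this convergence from $\mathcal{D}_{cl}$ to all of $\mathcal{B}_b(H)$. Concretely, take $H=\mathbb{R}$, $X$ Brownian motion, and $g=1_{\{0\}}$: then $P_tg(0)=0$ for every $t>0$ while $g(0)=1$, even though $g$ is a bp-limit of smooth cylinder functions. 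The obstruction is an interchange of the limits $t\downarrow 0$ and $n\to\infty$ that simply fails in general.

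The repair is that you never needed convergence for every Borel $g$. Once you have $\langle\mu_1,\lambda^2R_\lambda^2 g\rangle=\langle\mu_2,\lambda^2R_\lambda^2 g\rangle$ and $\lambda^2R_\lambda^2 g\to g$ pointwise and boundedly for $g\in C_b(H)$ (which follows from a.s.\ right-continuity of paths supplied by Theorem~\ref{Thm:3.1}, or directly from (\ref{eqn5.4}) for $g\in\mathcal{D}(\mathcal{A})$), dominated convergence yields $\langle\mu_1,g\rangle=\langle\mu_2,g\rangle$ for all $g\in C_b(H)$. Since $H$ is separable Hilbert and hence Polish, $C_b(H)$ is measure-determining for finite Borel measures, and the conclusion follows. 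Equivalently, apply the monotone-class theorem to the set $\{g\in\mathcal{B}_b(H):\langle\mu_1,g\rangle=\langle\mu_2,g\rangle\}$, which \emph{is} bp-closed, after you have shown it contains the multiplicative point-separating class $\mathcal{D}_{cl}$; this is where the $\pi$--$\lambda$ machinery belongs, not at the level of the semigroup convergence.
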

The uniqueness of measure-valued solutions for the nonlinear filtering equations and equivalence of two different forms, one involving the formal generator and other using Feller semigroup of the system Markov process eliminating the term involving the generator started with Kunita\cite{Kunita1971} and Szpirglas \cite{Szpirglas1978} and the following series of theorems can be proven by the same methods as in those original works for signal state space compact, locally compact and general Hilbert space cases as in \cite{Kunita1971,Fernando2013}: 

\begin{Thm}
	The random probability measure valued process $\pi^{Y}_{t} \in {\mathcal P}(H)$ uniquely solves the evolution equation:
	\begin{equation}
		\pi_{t}^{Y}[f]=\pi_{0}^{Y}[f]+\int_{0}^{t}\pi_{s}^{Y}[{\mathcal A}f]ds + \int_{0}^{t}\left(\pi_{s}^{Y}[hf]-\pi_{s}^{Y}[h]\pi_{s}^{Y}[f]\right)\cdot d\nu^{Y}(s), \hspace{.05in}  \forall f\in D({\mathcal A}). \label{eqn5.17}
	\end{equation}
	Equivalently, $\pi^{Y}_{t}$ uniquely solves the evolution equation:
	\begin{equation} 
		\pi_{t}^{Y}[f]=\pi_{0}[P_{t}f]+ \int_{0}^{t} \left(\pi_{s}^{Y}((P_{t-s}f) h)-\pi_{s}^{Y}(P_{t-s}f) \pi_{s}^{Y}(h)\right)\cdot d\nu^{Y}_{s},	\hspace{.05in}  \forall f\in {\mathcal B}_{b}(H). \label{eqn5.18}
	\end{equation}
	
\end{Thm}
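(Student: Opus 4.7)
The existence of a solution to \eqref{eqn5.17} is already in hand: when the signal noise $W$ and the observation noise $Z$ are independent the correction $\tilde D_s f$ vanishes, and the random measure $\pi_t^Y(d\zeta)$ guaranteed by Getoor's theorem satisfies \eqref{eqn5.12} which is precisely \eqref{eqn5.17}. My task is therefore to (a) prove the equivalence of \eqref{eqn5.17} and \eqref{eqn5.18}, and (b) establish uniqueness in each form, by adapting the arguments of Kunita \cite{Kunita1971} and Szpirglas \cite{Szpirglas1978} to the present infinite-dimensional setting, in the spirit of \cite{Fernando2013}.

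\textbf{Step 1: from FKK to the semigroup form.} Fix $t>0$ and $f\in {\mathcal B}_b(H)$ and introduce the time-dependent test function $\Phi(s,x):=(P_{t-s}f)(x)$ for $0\le s\le t$. By the weak-generator relation \eqref{eqn5.4} one has $(\partial_s + {\mathcal A})\Phi(s,\cdot)=0$. I would then apply the measure-valued It\^o product rule to the semimartingale $s\mapsto \pi_s^Y[\Phi(s,\cdot)]$, substituting $\Phi(s,\cdot)$ for $f$ in \eqref{eqn5.17}: the deterministic drift $\pi_s^Y[{\mathcal A}\Phi(s,\cdot)]\,ds$ cancels exactly against $\pi_s^Y[\partial_s\Phi(s,\cdot)]\,ds$, leaving
\[
\pi_t^Y[f]=\pi_0^Y[P_tf]+\int_0^t\left(\pi_s^Y[(P_{t-s}f)h]-\pi_s^Y[P_{t-s}f]\,\pi_s^Y[h]\right)\cdot d\nu^Y(s),
\]
which is exactly \eqref{eqn5.18}. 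Running the computation in reverse, differentiating \eqref{eqn5.18} in $t$ and invoking $\partial_u P_u f|_{u=0}={\mathcal A}f$ for $f\in{\mathcal D}({\mathcal A})$, recovers \eqref{eqn5.17}. Because ${\mathcal D}_{cl}\subset{\mathcal D}({\mathcal A})$ is measure-determining, the two equations are equivalent on the full class.

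\textbf{Step 2: uniqueness.} Given two probability-measure-valued solutions $\pi^{(1)}$ and $\pi^{(2)}$ of \eqref{eqn5.18} with the same initial law $\pi_0$, fix $f\in{\mathcal B}_b(H)$ and consider $\Delta_t:=\pi_t^{(1)}[f]-\pi_t^{(2)}[f]$. Using $\|P_{t-s}f\|_\infty\le\|f\|_\infty$ and boundedness of $h$, the difference $\Delta_t$ satisfies a linear stochastic integral equation against the innovation $\nu^Y$ whose coefficients are Lipschitz in the paired measures. The It\^o isometry applied to $E[|\Delta_t|^2]$ then yields a Gronwall inequality forcing $\Delta_t\equiv 0$ for every $f\in{\mathcal B}_b(H)$. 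The Szpirglas-type Lemma recalled above then promotes this pointwise equality to $\pi_t^{(1)}=\pi_t^{(2)}$ as random measures. Uniqueness for \eqref{eqn5.17} is inherited by passing each solution through Step 1 to the semigroup form \eqref{eqn5.18} and invoking uniqueness there.

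\textbf{Main obstacle.} The principal technical difficulty is justifying the It\^o product rule for the semimartingale $s\mapsto \pi_s^Y[P_{t-s}f]$ when $f$ is only bounded measurable, since ${\mathcal D}({\mathcal A})$ is defined through a weak limit and is not a priori preserved in a strong topology by the Feller semigroup $P_u$. I would handle this by first proving the identity for $f\in {\mathcal D}_2({\mathcal A})$, where the iterated action of ${\mathcal A}$ guarantees the requisite smoothness in $s$, and then extending to $f\in{\mathcal B}_b(H)$ via bounded-pointwise approximation together with the uniform $L^\infty$ control on $P_{t-s}f$ and the separating property provided by the Szpirglas-type Lemma. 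The remaining bookkeeping (Burkholder--Davis--Gundy, Gronwall, measurability of the conditional kernel) is routine.
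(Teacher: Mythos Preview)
The paper does not supply its own proof of this theorem: it explicitly defers to Kunita \cite{Kunita1971}, Szpirglas \cite{Szpirglas1978} and \cite{Fernando2013}, after sketching (for the Zakai equation, around \eqref{eqn5.15}) the key idea of inserting the backward-Kolmogorov solution $P_{t-s}f$ so that the generator term drops out. Your Step~1 is exactly this device applied directly to the FKK equation, and your appeal to ${\mathcal D}_2({\mathcal A})$ is precisely the Szpirglas lemma the paper quotes. So the approach is the same as what the paper invokes.

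Two points worth tightening. First, you assume $h$ is bounded in Step~2, but the paper only imposes the linear growth $|h(x)|\le C\|x\|$; in that regime the It\^o-isometry/Gronwall estimate on $E[|\Delta_t|^2]$ does not close directly, and the standard route (which is what Szpirglas and \cite{Fernando2013} actually do) is to prove uniqueness for the \emph{linear} Zakai equation first and then transfer it to $\pi$ via the normalisation \eqref{eqn5.13}, rather than attacking the nonlinear FKK difference head-on. Second, in Step~2 the difference $\Delta_t$ obeys a genuinely bilinear equation because of the product $\pi_s[h]\pi_s[f]$; calling the coefficients ``Lipschitz in the paired measures'' hides that one needs a uniform a~priori bound on $\pi_s^{(i)}[h]$, which again is immediate if $h$ is bounded but not otherwise. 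None of this is fatal---it just means your Step~2 should be rerouted through Zakai uniqueness, exactly as the paper's sketch before the theorem suggests.
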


\begin{Thm}
	the random measure valued process $\vartheta^{Y}_{t} \in {\mathcal M}(H)$ uniquely solves the evolution equation:
	\begin{equation}
		\vartheta_{t}^{Y}[f]=\vartheta_{0}^{Y}[f]+\int_{0}^{t}\vartheta_{s}^{Y}[{\mathcal A}f]ds + \int_{0}^{t}\vartheta_{s}^{Y}[hf]\cdot dY(s), \hspace{.05in} \forall f\in D({\mathcal A}). \label{5.19}
	\end{equation}
	Equivalently, $\vartheta^{Y}_{t} \in {\mathcal M}(H)$ uniquely solves the evolution equation:
	\begin{equation} 
		\vartheta_{t}^{Y}[f]=\vartheta_{0}[P_{t}f]+ \int_{0}^{t} \vartheta_{s}^{Y}((P_{t-s}f) h)\cdot dY({s}),	\hspace{.05in} \forall f\in {\mathcal B}_{b}(H). \label{5.20}
	\end{equation}
	
\end{Thm}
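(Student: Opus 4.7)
The plan is to prove both halves of the theorem in tandem: first establish that the constructed $\vartheta_t^Y$ is a solution of \eqref{5.19} (and hence, by reformulation, of \eqref{5.20}), and then prove uniqueness by passing through the backward Kolmogorov equation associated with the transition semigroup $P_t$, exactly as flagged right before the theorem statement. Existence on $\mathcal{D}_{cl}$ has essentially been produced already by the definition \eqref{eqn5.13} combined with the FKK equation \eqref{eqn5.12} and It\^o's formula applied to the exponential; to promote this from $f \in \mathcal{D}_{cl}$ to arbitrary $f \in D(\mathcal{A})$, I would use that $\mathcal{D}_{cl} \subset D(\mathcal{A})$ is a core in the weak sense given by \eqref{eqn5.4}, so both sides of \eqref{5.19} extend by weak-bounded approximation, using that $\vartheta_t^Y$ is a finite measure (the normalizing exponential in \eqref{eqn5.13} is in $L^1$).

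For uniqueness of \eqref{5.19}, I would follow the backward-Kolmogorov strategy outlined in the paper around \eqref{eqn5.15}. Given a terminal datum $\Phi \in \mathcal{D}_2(\mathcal{A})$, let $\Phi(s) := P_{t-s}\Phi$ so that $(\partial_s + \mathcal{A})\Phi(s) = 0$ for $0 \le s \le t$ (this is where the requirement $\mathcal{A}\Phi \in D(\mathcal{A})$ from the uniqueness lemma is used to justify the differentiability and membership in $D(\mathcal{A})$ of $\Phi(s)$). Suppose $\vartheta_t^{Y,(1)}$ and $\vartheta_t^{Y,(2)}$ both satisfy \eqref{5.19} with the same initial condition; substituting the test function $\Phi(s)$ into each and subtracting, the $\int_0^t \vartheta_s^Y[\mathcal{A}\Phi(s)]\,ds$ term is cancelled against $\partial_s \Phi(s)$ (formally by an It\^o--Wentzell application to the product $\vartheta_s^Y[\Phi(s)]$, rigorously by the standard product rule for semimartingales paired with a time-dependent test function). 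Taking expectations in the resulting identity kills the stochastic integral against $Y$, leaving $E[\vartheta_t^{Y,(1)}[\Phi] - \vartheta_t^{Y,(2)}[\Phi]] = 0$ for every $\Phi \in \mathcal{D}_2(\mathcal{A})$. The measure-theoretic lemma preceding the theorem then upgrades this to equality of the two random measures, after localizing over a countable determining subclass of $\mathcal{D}_2(\mathcal{A})$ to handle the almost-sure statement.

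The equivalence of \eqref{5.19} and \eqref{5.20} is then short. Starting from \eqref{5.19}, plug in the test function $P_{t-s}f$ for $f \in \mathcal{B}_b(H)$: since $\mathcal{A}P_{t-s}f = -\partial_s P_{t-s}f$ on $D(\mathcal{A})$, the time-integral against $\mathcal{A}$ telescopes with the $s$-differentiation of $\vartheta_s^Y[P_{t-s}f]$ coming from the generalized It\^o formula, so that
\begin{equation*}
\vartheta_t^Y[f] - \vartheta_0^Y[P_t f] = \int_0^t \vartheta_s^Y[(P_{t-s}f)\, h]\cdot dY(s),
\end{equation*}
which is exactly \eqref{5.20}. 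The reverse direction uses that \eqref{5.20} determines $\vartheta_t^Y[f]$ for $f \in \mathcal{B}_b(H)$, in particular for $f \in D(\mathcal{A})$, and recovering \eqref{5.19} is a matter of differentiating in $t$ and using the defining relation \eqref{eqn5.4} for the generator.

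The main obstacle I anticipate is the rigorous justification of the It\^o--Wentzell step needed to combine the time-dependent test function $\Phi(s) = P_{t-s}\Phi$ with the measure-valued semimartingale $\vartheta_s^Y$; this requires enough regularity of $s \mapsto P_{t-s}\Phi$ as a $D(\mathcal{A})$-valued curve together with the jump-integral contributions inherited from the L\'evy component of $M(t)$. Handling the jump term will demand the compensated Poisson integral bound used in the proof of Theorem \ref{Thm:3.1} together with the L\'evy version of the generator $\mathcal{A}$ displayed above Proposition~5.1, so that the passage from $\mathcal{D}_{cl}$ to $D(\mathcal{A})$ and the density argument for the uniqueness lemma both remain valid in the pure-jump and mixed cases.
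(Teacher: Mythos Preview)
Your overall architecture matches the paper's own plan: derive \eqref{5.19} from the FKK equation via It\^o's rule applied to the exponential in \eqref{eqn5.13}, pass to \eqref{5.20} by inserting the backward Kolmogorov solution $\Phi(s)=P_{t-s}\Phi$, and invoke the Szpirglas machinery for uniqueness. The paper does not write out a proof either, deferring to \cite{Szpirglas1978, Kunita1971, Fernando2013}, so on the existence and equivalence parts you are in line with what is intended.

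There is, however, a genuine gap in your uniqueness step. After the It\^o--Wentzell substitution the difference $\delta_t := \vartheta_t^{Y,(1)} - \vartheta_t^{Y,(2)}$ satisfies
\[
\delta_t[\Phi] \;=\; \int_0^t \delta_s\bigl[h\,P_{t-s}\Phi\bigr]\cdot dY(s).
\]
Taking expectations (under the reference measure in which $Y$ is a Wiener process) only yields $E[\delta_t[\Phi]]=0$, i.e.\ the \emph{expected} signed measure vanishes. This does \emph{not} give $\delta_t[\Phi]=0$ a.s.: a mean-zero scalar random variable times any fixed nonzero measure already furnishes a counterexample. Consequently the measure-determination lemma on $\mathcal{D}_2(\mathcal{A})$ cannot be invoked from this conclusion, and your ``countable determining subclass'' patch addresses a different issue (swapping ``for all $\Phi$'' with ``a.s.'') rather than this one.

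The Szpirglas argument the paper cites closes the gap differently. From the difference equation above one computes, via the It\^o isometry under the reference measure,
\[
E\bigl[\,|\delta_t[\Phi]|^2\,\bigr] \;=\; \int_0^t E\bigl[\,|\delta_s[h\,P_{t-s}\Phi]|^2\,\bigr]\,ds,
\]
and then either iterates this identity (using $\|h\,P_{t-s}\Phi\|_\infty\le C\|\Phi\|_\infty$ together with an a~priori bound on the total mass of each $\vartheta_s^{Y,(i)}$) to obtain the $C^n t^n/n!$ decay, or applies a Gronwall argument, concluding $E[|\delta_t[\Phi]|^2]=0$ and hence $\delta_t[\Phi]=0$ a.s.\ for each $\Phi$. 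Only at that stage does the $\mathcal{D}_2(\mathcal{A})$ lemma, combined with a countable determining class, finish the identification $\vartheta_t^{Y,(1)}=\vartheta_t^{Y,(2)}$ as random measures.
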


\begin{Thm}
	The  FKK equation (\ref{eqn5.18}) for arbitrary initial condition $\pi_{0}$ has a unique solution $\pi^{Y}_{t}$. Furthermore,
	\begin{enumerate}
		\item The solution $\pi^{Y}_{t}$ is $\sigma(Y(s)-Y(0); 0\leq s\leq t)\vee\sigma(\pi_{0})$ -measurable.
		\item Let $\pi_{t}^{Y(\nu)}$ and $\pi_{t}^{Y(\mu)}$ be solutions with initial conditions $\pi_{0}=\nu$ and $\pi_{0}=\mu$ respectively, where $\mu,\nu \in {\mathcal M}(H)$.  
		Then for every $t>0$ 
		\begin{equation}
			\lim_{\nu\rightarrow \mu}E\left [ \vert\pi_{t}^{Y(\nu)}(f)-\pi_{t}^{Y(\mu)} (f)\vert \right] =0, \hspace{.1in} \forall f \in C_{b}(H).\label{eqn5.21}
		\end{equation}
	\end{enumerate}
\end{Thm}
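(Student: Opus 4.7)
The plan is to reduce the nonlinear FKK equation (\ref{eqn5.18}) to the linear Zakai equation (\ref{5.20}) via the Kallianpur--Striebel transformation (\ref{eqn5.13}), following the classical arguments of Kunita \cite{Kunita1971} and Szpirglas \cite{Szpirglas1978}. Given any solution $\pi^{Y}_{\cdot}$ of (\ref{eqn5.18}), set
\begin{equation*}
\vartheta^{Y}_{t}[f]:=\pi^{Y}_{t}[f]\exp\left\{\int_{0}^{t}\pi^{Y}_{s}[h]\cdot dY(s)-\frac{1}{2}\int_{0}^{t}|\pi^{Y}_{s}[h]|^{2}ds\right\}.
\end{equation*}
An application of It\^o's formula, combined with the innovation decomposition $dY=d\nu^{Y}+\pi^{Y}_{s}[h]\,ds$, shows that $\vartheta^{Y}_{\cdot}$ satisfies (\ref{5.20}); conversely, the normalization $\pi^{Y}_{t}=\vartheta^{Y}_{t}/\vartheta^{Y}_{t}(1)$ recovers a solution of (\ref{eqn5.18}) from one of (\ref{5.20}). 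Since this correspondence is bijective, it suffices to prove uniqueness, measurability, and continuous dependence at the level of the linear equation.

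For uniqueness of (\ref{5.20}), I would set up a Picard iteration on the semigroup form. Starting with $\vartheta^{(0)}_{t}[f]=\vartheta_{0}[P_{t}f]$, define recursively
\begin{equation*}
\vartheta^{(n+1)}_{t}[f]=\vartheta_{0}[P_{t}f]+\int_{0}^{t}\vartheta^{(n)}_{s}[(P_{t-s}f)h]\cdot dY(s).
\end{equation*}
Using $\|P_{t-s}f\|_{\infty}\leq\|f\|_{\infty}$ for $f\in{\mathcal B}_{b}(H)$, together with the growth bound $|h(x)|\leq C\|x\|$ and the a priori moment estimate from Theorem \ref{Thm:3.1}, the Burkholder--Davis--Gundy inequality and a Gr\"onwall argument in $t$ show that this sequence is Cauchy in an appropriate $L^{2}(\Omega)$-based norm on ${\mathcal M}(H)$-valued processes. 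The same estimates show that any two solutions of (\ref{5.20}) agree on the dense class ${\mathcal D}_{2}({\mathcal A})$; the separability lemma stated just before the theorem then extends equality to all of ${\mathcal M}(H)$.

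The measurability assertion in part (1) follows because each Picard iterate is, by inspection, $\sigma(Y(s)-Y(0);\,0\leq s\leq t)\vee\sigma(\pi_{0})$-measurable (each step only adds a stochastic integral against the $Y$-increments to data that is already $\sigma(\pi_{0})$-measurable), and this property is preserved in the $L^{2}$-limit. For the continuity statement (2), I would exploit the linearity of (\ref{5.20}) in the initial datum: the difference $\delta^{Y}_{t}:=\vartheta^{Y(\nu)}_{t}-\vartheta^{Y(\mu)}_{t}$ satisfies an affine stochastic integral equation with initial datum $\nu-\mu$, yielding an estimate of the shape $E[|\delta^{Y}_{t}[f]|^{2}]\leq C(t)\|f\|^{2}_{\infty}\|\nu-\mu\|^{2}_{{\mathcal M}(H)}$ by Gr\"onwall.

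The main obstacle, and the only genuinely delicate step, is transferring this linear estimate through the nonlinear normalization $\pi^{Y}_{t}=\vartheta^{Y}_{t}/\vartheta^{Y}_{t}(1)$: the ratio is only locally Lipschitz in $\vartheta$, so one must show that $\vartheta^{Y(\mu)}_{t}(1)$ stays bounded below in probability uniformly as $\nu\to\mu$. This is handled by exploiting the explicit exponential representation of $\vartheta^{Y}_{t}(1)$ and the uniform integrability arguments developed in \cite{Fernando2013}. Once almost-sure convergence along a subsequence is obtained, the uniform bound $|\pi^{Y}_{t}[f]|\leq\|f\|_{\infty}$ for $f\in C_{b}(H)$ allows dominated convergence to upgrade to the $L^{1}(\Omega)$ convergence (\ref{eqn5.21}), completing the argument.
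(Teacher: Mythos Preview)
The paper does not actually prove this theorem: immediately before the block of results containing it, the authors state that ``the following series of theorems can be proven by the same methods as in those original works'' and defer entirely to Kunita \cite{Kunita1971}, Szpirglas \cite{Szpirglas1978}, and Fernando--Sritharan \cite{Fernando2013} for the Hilbert-space-valued signal case. Your outline---reduce FKK to Zakai via the Kallianpur--Striebel exponential, establish uniqueness and measurability for the linear equation by Picard iteration on the semigroup form (\ref{5.20}), then transfer continuity in the initial law back through the normalization---is precisely the Kunita/Szpirglas program the paper invokes, so there is nothing to compare against and your approach is the intended one.

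One caution on the sketch itself: the Picard/Gr\"onwall step as you wrote it leans on $|h(x)|\leq C\|x\|$ together with the moment bound from Theorem \ref{Thm:3.1}, but that bound is only up to the stopping time $\tau_{\Lambda}$, and unbounded $h$ means $\vartheta^{Y}_{s}[(P_{t-s}f)h]$ is not controlled by $\|f\|_{\infty}$ alone. Kunita's original argument assumed bounded $h$; the extension to linearly growing $h$ in infinite dimensions is exactly the content of \cite{Fernando2013}, which you correctly flag for the lower-bound-on-$\vartheta_{t}(1)$ step but should also invoke earlier for the contraction estimate. With that adjustment the outline is sound.
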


\begin{Thm}
	The filtering process $(\pi^{Y}, {\mathcal F}_{t}^{Y}, P_{\mu})$, $\mu \in {\mathcal M}(H)$ are Markov process associated with the transition probabilities $\Pi_{t}(\nu, \Gamma)$ defined by 
	\begin{displaymath}
		\Pi_{t}(\nu,\Gamma)=P\{\pi_{t}^{Y}\in \Gamma : \hspace{.1in} \Gamma \in {\mathcal B} ({\mathcal M}(H))\},
	\end{displaymath}
	where ${\mathcal B} ({\mathcal M}(H))$ is the Borel algebra generated by the open (or closed) sets in ${\mathcal M}(H)$.  
	
	Furthermore, the transition probabilities $\Pi_{t}(\nu,\Gamma)$ define a Feller semigroup in $C_{b}({\mathcal M}(H))$, where $C_{b}({\mathcal M}(H))$ is the space of all real continuous functions over ${\mathcal M}(H)$.
\end{Thm}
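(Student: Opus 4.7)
The plan is to establish the Markov property first via a time-shift/uniqueness argument, then extract the Feller semigroup from the continuous dependence statement (\ref{eqn5.21}) of the preceding theorem. Throughout, I will lean on the uniqueness of measure-valued solutions for the FKK equation \eqref{eqn5.18} just proved, and on the fact that the innovations process $\nu^Y$ is a standard Wiener process whose increments after time $s$ are independent of $\mathcal{F}_s^Y$.

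First I would fix $s\leq t$ and an initial law $\mu\in{\mathcal M}(H)$. Starting the FKK dynamics at time $s$ with random initial condition $\pi_s^Y$, one constructs a measure-valued process $\tilde\pi_r$, $r\in[s,t]$, driven by the shifted innovations $\{\nu^Y(r)-\nu^Y(s): r\geq s\}$. Since these increments are independent of $\mathcal{F}_s^Y$, the conditional law of $\tilde\pi_t$ given $\mathcal{F}_s^Y$ depends only on $\pi_s^Y$, and hence defines a kernel $\Pi_{t-s}(\pi_s^Y,\cdot)$. But the original $\pi^Y$ solves the same FKK equation on $[s,t]$ with initial condition $\pi_s^Y$, so by the uniqueness part of the preceding theorem we have $\tilde\pi_t=\pi_t^Y$ almost surely. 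Taking conditional expectation gives
\begin{equation*}
E\left[F(\pi_t^Y)\,\big|\,\mathcal{F}_s^Y\right]=\int_{{\mathcal M}(H)} F(\eta)\,\Pi_{t-s}(\pi_s^Y,d\eta)=:(T_{t-s}F)(\pi_s^Y),
\end{equation*}
for every $F\in C_b({\mathcal M}(H))$, which is precisely the Markov property. The semigroup property $T_{t+r}=T_t T_r$ then follows from iterating this identity together with uniqueness.

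Next I would verify the Feller property of $\{T_t\}$ on $C_b({\mathcal M}(H))$. Fix $F\in C_b({\mathcal M}(H))$ and $t>0$; I need $\nu\mapsto (T_tF)(\nu)=E[F(\pi_t^{Y(\nu)})]$ to be continuous and bounded. Boundedness is immediate from $\|F\|_\infty$. For continuity, take $\nu_n\to\mu$ in ${\mathcal M}(H)$. By the continuous dependence estimate \eqref{eqn5.21} of the previous theorem, for every $f\in C_b(H)$ we have $\pi_t^{Y(\nu_n)}(f)\to\pi_t^{Y(\mu)}(f)$ in $L^1$, hence along a subsequence almost surely, which yields weak convergence $\pi_t^{Y(\nu_n)}\Rightarrow\pi_t^{Y(\mu)}$ a.s. in ${\mathcal M}(H)$. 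Since $F$ is continuous on ${\mathcal M}(H)$, $F(\pi_t^{Y(\nu_n)})\to F(\pi_t^{Y(\mu)})$ a.s., and dominated convergence (using $\|F\|_\infty$) gives $(T_tF)(\nu_n)\to(T_tF)(\mu)$. Thus $T_t:C_b({\mathcal M}(H))\to C_b({\mathcal M}(H))$. Strong continuity at $t=0$ follows analogously from sample continuity of $\pi_\cdot^Y$ (inherited from the sample continuity of $\nu^Y$ and of solutions to the FKK equation).

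The main obstacle I expect is the rigorous verification that $\tilde\pi_t=\pi_t^Y$ almost surely: one must check that both are indeed solutions to the FKK equation \eqref{eqn5.18} on the interval $[s,t]$ with the same initial datum $\pi_s^Y$, so that the uniqueness statement applies pathwise (or at least for almost every realization of $\pi_s^Y$). This requires a regular-conditional-probability construction of the shifted filtering dynamics and a measurable selection to ensure that the kernel $\Pi_{t-s}(\cdot,\cdot)$ is genuinely a stochastic kernel on ${\mathcal M}(H)$, exactly as carried out in the original works of Kunita \cite{Kunita1971} and Szpirglas \cite{Szpirglas1978}; once this measurability issue is resolved, the remainder of the argument is standard.
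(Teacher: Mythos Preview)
Your proposal is correct and follows essentially the same approach as the paper invokes. In fact, the paper does not supply its own proof of this theorem: the sentence immediately preceding the block of four theorems (of which this is the last) states that ``the following series of theorems can be proven by the same methods as in those original works'' and cites Kunita~\cite{Kunita1971}, Szpirglas~\cite{Szpirglas1978}, and Fernando~\cite{Fernando2013}. Your outline---Markov property via the time-shift/uniqueness argument for the FKK equation combined with independence of the innovation increments, and Feller property via the continuous-dependence estimate~\eqref{eqn5.21}---is precisely the Kunita--Szpirglas strategy the paper is pointing to, and you have even flagged the measurable-selection/regular-conditional-probability step that those references handle in detail.

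One minor tightening: in the Feller argument, the passage from $L^1$ convergence of $\pi_t^{Y(\nu_n)}(f)$ for each $f\in C_b(H)$ to almost-sure weak convergence of the random measures requires a diagonalization over a countable determining class of test functions (available since $H$ is separable), and then the ``every subsequence has a further subsequence'' trick to conclude convergence of the full sequence $(T_tF)(\nu_n)$. This is routine, but worth stating rather than eliding via ``along a subsequence.''
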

\subsection{Evolution Equation for Error Covariance: It\^o calculus case }

For linear systems the paper of Kalman and Bucy \cite{Kalman1961} gives a characterization of the evolution of error covariance using a Riccati equation. This concept was generalized by Kunita\cite{Kunita1991} for nonlinear filters. Let us describe the time evolution of error covariance slightly generalizing the results of Kunita\cite{Kunita1991}. The error covariance of general moments $f(X(t))$ and $g(X(t))$ is given by
\begin{equation}
	{\mathcal P}_{t}[f,g]=E\left [\left (f(X(t))-\pi_{t}(f)\right )\left(g(X(t))-\pi_{t}(g)\right )\right]. \label{eqn5.25}
\end{equation}
\begin{Thm} Let $f,g\in D({\mathcal A})$. Then ${\mathcal P}_{t}[f,g]$ is differentiable with respect to $t$ and the derivative satisfies
	\begin{displaymath}
		\frac{d}{dt} {\mathcal P}_{t}[f,g]= {\mathcal P}_{t}[f,{\mathcal A}g] + {\mathcal P}_{t}[{\mathcal A}f,g]+{\mathcal Q}_{t}[f,g] 
	\end{displaymath}
	\begin{equation}
		-	\sum_{i=1}^{N}\int_{0}^{t} E \left [ \Theta_{i}(s)\right ]ds, \label{eqn5.26}
	\end{equation}
	where
	\begin{displaymath}
		\Theta_{i}= \pi_{s} \left \{(f-\pi_{s}(f))(h^{i}-\pi_{s}(h^{i}))+\tilde{D}f\right \} \pi_{s}\left \{  (g-\pi_{s}(g))(h^{i}-\pi_{s}(h^{i}))+\tilde{D}g\right\},
	\end{displaymath}
	and	
	\begin{equation}
		{\mathcal Q}_{t}[f,g] =E\left [\mbox{ tr} (x^{*}Q x D_{x}f \otimes D_{x}g)\right].	 \label{eqn5.27}
	\end{equation}
\end{Thm}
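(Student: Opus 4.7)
The plan is to decompose $\mathcal{P}_t[f,g]$ into a difference of two expectations and differentiate each piece separately. By the tower property of conditional expectation, $E[f(X(t))\pi_t(g)] = E[\pi_t(f)\pi_t(g)]$, and symmetrically for the cross term involving $\pi_t(f)g(X(t))$, so
\begin{equation*}
\mathcal{P}_t[f,g] \;=\; E\bigl[(fg)(X(t))\bigr] \;-\; E\bigl[\pi_t(f)\pi_t(g)\bigr].
\end{equation*}
The two terms will then be handled respectively by the It\^o formula for the signal $X(t)$ and by the It\^o product rule applied to the FKK equation \eqref{eqn5.12}.

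For the first term I would apply the martingale identity \eqref{eqn5.6} to the product $fg$, using that $\mathcal{D}_{cl}$ is an algebra so that $fg \in D(\mathcal{A})$ whenever $f,g\in\mathcal{D}_{cl}$. The explicit form of the generator $\mathcal{A}$ displayed just before Proposition 5.1 produces a carr\'e du champ decomposition $\mathcal{A}(fg) = f\,\mathcal{A}g + g\,\mathcal{A}f + \Gamma(f,g)$ with $\Gamma(f,g)(x) = \mathrm{Tr}\bigl(x^*QxD_xf \otimes D_xg\bigr)$; taking expectations and differentiating in $t$ gives
\begin{equation*}
\tfrac{d}{dt}E\bigl[(fg)(X(t))\bigr] \;=\; E\bigl[(f\mathcal{A}g)(X(t))\bigr] + E\bigl[(g\mathcal{A}f)(X(t))\bigr] + \mathcal{Q}_t[f,g].
\end{equation*}

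For the second term I would apply It\^o's product rule to $\pi_t(f)\pi_t(g)$ with both factors evolving according to \eqref{eqn5.12}. Since the innovation $\nu^Y$ is a standard $N$-dimensional Brownian motion, the quadratic covariation reads
\begin{equation*}
d\langle\pi(f),\pi(g)\rangle_t \;=\; \sum_{i=1}^N \bigl(\pi_t(fh^i+\tilde{D}^i f)-\pi_t(f)\pi_t(h^i)\bigr)\bigl(\pi_t(gh^i+\tilde{D}^i g)-\pi_t(g)\pi_t(h^i)\bigr)\,dt,
\end{equation*}
which coincides with $\sum_{i=1}^N \Theta_i(t)\,dt$ once each factor is rewritten in the form $\pi_t\{(f-\pi_t f)(h^i-\pi_t h^i)+\tilde{D}^i f\}$. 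Taking expectations annihilates the innovation martingale and yields
\begin{equation*}
\tfrac{d}{dt}E\bigl[\pi_t(f)\pi_t(g)\bigr] \;=\; E\bigl[\pi_t(f)\pi_t(\mathcal{A}g)\bigr] + E\bigl[\pi_t(g)\pi_t(\mathcal{A}f)\bigr] + \sum_{i=1}^N E\bigl[\Theta_i(t)\bigr].
\end{equation*}

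Subtracting the two derivatives and invoking once more the tower identity $E[(f\mathcal{A}g)(X(t))] - E[\pi_t(f)\pi_t(\mathcal{A}g)] = \mathcal{P}_t[f,\mathcal{A}g]$ (together with its symmetric counterpart) delivers the claimed formula, with the correction term $\sum_{i=1}^N E[\Theta_i(t)]$; the integration $\int_0^t\!\cdot\,ds$ appearing in the statement is presumably a typographical artifact, since the left-hand side is a time derivative. The main obstacle is justifying the carr\'e du champ identity and the It\^o product rule at the infinite-dimensional level when $f,g$ lie only in $D(\mathcal{A})$ rather than in $\mathcal{D}_{cl}$: the cleanest route is to prove the formula first on the algebra $\mathcal{D}_{cl}$, where all Fr\'echet derivatives, products and traces involving $\sqrt{Q}$ are immediately well defined, and then pass to the weak-convergence closure in \eqref{eqn5.4} used to define the extended generator.
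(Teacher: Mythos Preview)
Your proposal is correct and follows essentially the same route as the paper: both arguments rest on the carr\'e du champ identity $\mathcal{A}(fg)=f\mathcal{A}g+g\mathcal{A}f+\mathrm{tr}(x^{*}Qx\,D_xf\otimes D_xg)$, the It\^o product rule for $\pi_t(f)\pi_t(g)$ driven by the FKK equation, and the observation $\mathcal{P}_t[f,g]=E[\pi_t(fg)]-E[\pi_t(f)\pi_t(g)]$ (equivalently $E[(fg)(X_t)]-E[\pi_t(f)\pi_t(g)]$ via the tower property). Your remark that the $\int_0^t\cdot\,ds$ in the stated formula should simply be the instantaneous term $\sum_i E[\Theta_i(t)]$ is also consistent with what the paper's own derivation actually produces.
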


\begin{proof}
	
	Note that we have:
	\begin{displaymath}
		{\mathcal A}(fg)=f{\mathcal A}g + g{\mathcal A}f +\mbox{ tr} (x^{*}Q x D_{x}f \otimes D_{x}g).
	\end{displaymath}
	We first write the FKK equation for $\pi_{t}(fg)$ as
	\begin{displaymath}
		\pi_{t}(fg)=\pi_{0}(fg) +\int_{0}^{t} \pi_{s}(f {\mathcal A}g)ds +\int_{0}^{t}\pi_{s}(g{\mathcal A}f)ds
	\end{displaymath}
	\begin{displaymath}
		+\int_{0}^{s}\pi_{s}( \mbox{ tr} (x^{*}Q x D_{x}f \otimes D_{x}g))ds +M^{\pi}_{t},
	\end{displaymath}
	where $M^{\pi}_{t}$ is a ${\mathcal F}^{Y}_{t}$-martingale with mean zero and $f,g\in D({\mathcal A})$. Similarly since $\pi_{t}(f)$ and $\pi_{t}(g)$ satisfy FKK equation we have by Ito formula for the product $\pi_{t}(f)\pi_{t}(g)$:
	\begin{displaymath}
		\pi_{t}(f)\pi_{t}(g)=\pi_{0}(f)\pi_{0}(g)+ \int_{0}^{t}\pi_{s}(f)\pi_{s}({\mathcal A}g)ds+\int_{0}^{t}\pi_{s}(g)\pi_{s}({\mathcal A}f)ds+
	\end{displaymath}
	\begin{displaymath}
		\sum_{i}\int_{0}^{t} \left \{ \pi_{s}(fh^{i})-\pi_{s}(f)\pi_{s}(h^{i})+\pi_{s}(\tilde{D}f)\right \}\left \{ \pi_{s}(gh^{i})-\pi_{s}(g)\pi_{s}(h^{i})+\pi_{s}(\tilde{D}g)\right \}ds 
	\end{displaymath}
	\begin{displaymath}
		+\tilde{M}^{\pi}_{t},
	\end{displaymath}
	where $\tilde{M}^{\pi}_{t}$ is a ${\mathcal F}^{Y}_{t}$-martingale with mean zero. Now noting that
	\begin{displaymath}
		\pi_{s}(fg)-\pi_{s}(f)\pi_{s}(g)=\pi_{s}\left \{(f-\pi_{s}(f))(g-\pi_{s}(g))\right\},
	\end{displaymath}
	we get 
	\begin{displaymath}
		{\mathcal P}_{t}(f,g)=E[\pi_{t}(fg)-\pi_{t}(f)\pi_{t}(g)].
	\end{displaymath}
	Substituting for $\pi_{t}(fg)$ and $\pi_{t}(f)\pi_{t}(g)$ from  the previous two equations and taking expectation we arrive at
	\begin{displaymath}
		{\mathcal P}_{t}(f,g)={\mathcal P}_{0}(f,g) +\int_{0}^{t}\left \{ {\mathcal P}_{s}[f,{\mathcal A}g] + {\mathcal P}_{s}[{\mathcal A}f,g]+{\mathcal Q}_{s}[f,g] \right \}ds -\sum_{i=1}^{N}\int_{0}^{t} E \left [ \Theta_{i}\right ]ds,	
	\end{displaymath}
	where
	\begin{displaymath}
		\Theta_{i}= \pi_{s} \left \{(f-\pi_{s}(f))(h^{i}-\pi_{s}(h^{i}))+\tilde{D}f\right \} \pi_{s}\left \{  (g-\pi_{s}(g))(h^{i}-\pi_{s}(h^{i}))+\tilde{D}g\right\}.
	\end{displaymath}
\end{proof}

\section{White Noise Theory of Nonlinear Filtering}
White noise theory of nonlinear filtering was initiated by A. V. Balakrishnan \cite{ Bala1980} and systematically developed by G. Kallianpur and R. Karandikar \cite{ Kallianpur1988}. In \cite{Sritharan2023a} we developed such filtering method for classical and quantum spin systems. In this paper we will further develop this method to be applicable to the nonlinear stochastic wave equations discussed in this paper. Key mathematical equations of white noise nonlinear filtering theory are essentially parallel to stochastic calculus counterpart but of deterministic in nature. The Kallianpur-Striebel formula, the Fujisaki-Kallianpur-Kunita equation, the Zakai equation and the Kunita's semigroup versions of FKK and the Zakai equation all have white noise counterparts and will be developed below for the nonlinear stochastic wave equations. We will also establish the equivalency of FKK, Zakai with their Kunita counter parts in the spirit of Szpirglas\cite{Szpirglas1978}. 

We will now describe the theory of Segal \cite{Segal1956} and Gross \cite{Gross1960, Gross1962} of finitely additive cylindrical measures on separable Hilbert spaces. Let ${\mathcal H}$ be a separable Hilbert space and ${\mathcal P}$ be the set of orthogonal projections on ${\mathcal H}$ having finite dimensional range. For $P\in {\mathcal P}$, let ${\mathcal C}_{P}=\{P^{-1}B: B \mbox{ a Borel set in the range of } P\}$. Let ${\mathcal C}=\cup_{P\in {\mathcal P}} {\mathcal C}_{P}$. A cylinder measure $\bn$ on ${\mathcal H}$ is a finitely additive measure on $({\mathcal H}, {\mathcal C})$ such that its restriction to ${\mathcal C}_{P}$ is countably additive for each $P\in {\mathcal P}$.

Let $L$ be a representative of the weak-distribution corresponding to the cylinder measure $\bn$. This means that $L$ is a linear map from ${\mathcal H^{*}}$ (identified with ${\mathcal H}$) in to ${\mathcal L}_{0}(\Omega_{1}, \Sigma_{1},m_{1})$ -the space of all random variables on a $\sigma$-additive probability space $(\Omega_{1}, \Sigma_{1},m_{1})$ such that 
\begin{displaymath}
	\bn(h\in {\mathcal H}: (\langle h,h_{1}\rangle, \langle h,h_{2}\rangle,\cdots, \langle h,h_{k}\rangle)\in B)
\end{displaymath}
\begin{equation}
	= m_{1}(\omega\in \Omega_{1}; \left(L(h_{1})(\omega), L(h_{2})(\omega), \cdots, L(h_{k})(\omega)\right)\in B) \label{eqn6.1}
\end{equation}
for all Borel sets $B$ in $\mR^{k}$, $h_{1}, \cdots, h_{k}\in {\mathcal H}$, $k\geq 1$. Two maps $L, L'$ are said to be equivalent if both satisfy \ref{eqn6.1} and the equivalence class of such maps is the weak distribution corresponding to the cylindrical measure $\bn$.

A function $f$ on ${\mathcal H}$ is called a tame function (or cylindrical function) if it is of the form
\begin{equation}
	f(y)=\varphi(\langle y,h_{1}\rangle,\cdots, \langle y,h_{k}\rangle ), \mbox{ for some } k\geq 1, h_{1}, \cdots, h_{k}\in {\mathcal H}  \label{eqn6.2}
\end{equation}	
and a Borel function $\varphi:\mR^{k}\rightarrow \mR$. For a tame function $f$ given by \ref{eqn6.2}, we associate the random variable $\varphi(L(h_{1}), \cdots, L(h_{k}))$ on $(\Omega_{1},\Sigma_{1},m_{1})$ and denote it by $\tilde{f}$. We can extend this map $f\rightarrow \tilde{f}$ to a larger class of functions as follows \cite{Gross1962}:
\begin{Def} \label{D6.1} Let ${\mathcal L}({\mathcal H}, {\mathcal C}, \bn)$ be the class of continuous functions $f$ on ${\mathcal H}$ such that the net $\{\tilde{(f\circ P)}: P\in {\mathcal P}\}$ is Cauchy in $m_{1}$-measure. Here $P_{1}<P_{2}$ if the range $P_{1} \subseteq$ range $P_{2}$. For $f\in {\mathcal L}({\mathcal H}, {\mathcal C}, \bn)$, we define:
	\begin{displaymath}
		\tilde{f}={\mbox{Lim in Prob}}_{P\in {\mathcal P}} \tilde{(f\circ P)	}.
	\end{displaymath}
	The map $f\rightarrow \tilde{f}$ is linear, multiplicative and depends only on $f$ and $\bn$ and is independent of the representative of the weak distribution.	
\end{Def}

\begin{Def} \label{D6.2} The function $f\in {\mathcal L}({\mathcal H}, {\mathcal C}, \bn)$ is integrable with respect to $\bn$ if $\int_{\Omega_{1}}\vert \tilde{f}\vert dm_{1}<\infty$ and then for $C\in {\mathcal C}$ define the integral $f$ with respect to $\bn$ over $C$ denoted by $\int_{C}fd\bn$ by
	\begin{displaymath}
		\int_{C}fd{\boldmath n}=\int_{\Omega_{1}} \tilde{1_{C}} \tilde{f} dm_{1}.		
	\end{displaymath}
\end{Def}	
Let us now consider the class of all Gauss measures on ${\mathcal H}$ defined by
\begin{displaymath}
	\mu\{y\in {\mathcal H}: \langle y,h\rangle \leq a\}=\frac{1}{\sqrt{2\pi v(h)}}\int_{-\infty}^{a} \exp{(-\frac{x^{2}}{2v(h)})}dx, \hspace{.1in} \forall h\in {\mathcal H}.
\end{displaymath}
The special case of $v(h)=\|h\|^{2}$ is called the canonical Gauss measure $\bm$. The following Lemma from Sato \cite{Sato1969} (in particular, Lemma 6 in that paper)clarifies the $\sigma$-additivity of Gauss measures on separable Hilbert spaces.	
\begin{Lem} \label{L6.1} let ${\mathcal H}$ be a separable Hilbert space and let $\mu$ be a Gaussian cylinder measure on $({\mathcal H}, {\mathcal C})$ with variance $v(h)$. Then $\mu$ has a $\sigma$-additive extension to $({\mathcal H}, \bar{\mathcal C})$ (here $\bar{\mathcal C}$ is the minimal $sigma$-algebra containing ${\mathcal C})$ if and only if the characteristic functional of $\mu$ is of the form:
	\begin{displaymath}
		\int_{\mathcal H}e^{i \langle h,x\rangle }d\mu(x)	=\exp {\left [ i\langle h,m_{e}\rangle -\frac{1}{2}\|Sh\|^{2}\right]}, \hspace{.1in} \forall h\in {\mathcal H},
	\end{displaymath}
	where $	m_{e}$ is an element of ${\mathcal H}$, and $S$ is a nonnegative selfadjoint Hilbert-Schmidt operator on ${\mathcal H}$.
\end{Lem}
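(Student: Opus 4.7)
The plan is to prove this characterization as the Hilbert space Gaussian analogue of the Bochner-Minlos theorem, with the key bridge being the equivalence: $S$ is Hilbert-Schmidt $\iff$ $T := S^{*}S$ is nonnegative selfadjoint trace class. Thus the stated form of the characteristic functional is equivalent to saying the covariance of $\mu$ is trace class. I would prove both implications separately, with the necessity direction carrying the substantive content and the sufficiency direction following from an explicit series construction.

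For sufficiency, assume $v(h)=\|Sh\|^{2}$ with $S$ nonnegative selfadjoint Hilbert-Schmidt. Diagonalize $T=S^{2}$ by $Te_{i}=\lambda_{i}e_{i}$ in an orthonormal basis $\{e_{i}\}_{i\geq 1}$ of ${\mathcal H}$ with $\lambda_{i}\geq 0$ and $\sum_{i}\lambda_{i}=\|S\|_{HS}^{2}<\infty$. On a $\sigma$-additive probability space $(\Omega_{1},\Sigma_{1},m_{1})$ carrying independent standard real Gaussians $\{\beta_{i}\}$, define
\begin{equation*}
X := m_{e} + \sum_{i=1}^{\infty}\sqrt{\lambda_{i}}\,\beta_{i}\,e_{i}.
\end{equation*}
The partial sums are $L^{2}(\Omega_{1};{\mathcal H})$-Cauchy since $E\|X_{n}-X_{p}\|^{2}=\sum_{i=n+1}^{p}\lambda_{i}\to 0$, so the series converges in $L^{2}$ and $m_{1}$-almost surely in ${\mathcal H}$. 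A direct calculation using independence of the $\beta_{i}$'s shows that the law of $X$ on the Borel $\sigma$-algebra $\bar{\mathcal C}$ has characteristic functional $\exp[i\langle h,m_{e}\rangle-\tfrac{1}{2}\|Sh\|^{2}]$, so it is a $\sigma$-additive extension of $\mu$.

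For necessity, assume $\mu$ extends to a $\sigma$-additive Borel probability measure $\bar\mu$ on $({\mathcal H},\bar{\mathcal C})$. By the cylindrical Gaussian hypothesis, each linear functional $x\mapsto\langle h,x\rangle$ is one-dimensional Gaussian with some mean $m(h)$ and variance $v(h)$, hence $\hat{\bar\mu}(h)=\exp[i\,m(h)-\tfrac{1}{2}v(h)]$ with $m$ linear and $v$ a nonnegative quadratic form. Fernique's exponential integrability theorem for Gaussian Radon measures on separable Banach spaces yields $\int_{\mathcal H}\|x\|^{2}\,d\bar\mu(x)<\infty$. Hence $m(h)=\int\langle h,x\rangle\,d\bar\mu(x)=\langle h,m_{e}\rangle$ for a unique $m_{e}\in{\mathcal H}$, and the covariance operator $T$ defined by $\langle Th,k\rangle:=\int\langle x-m_{e},h\rangle\langle x-m_{e},k\rangle\,d\bar\mu(x)$ satisfies
\begin{equation*}
\mathrm{Tr}\,T \;=\; \sum_{i}\langle Te_{i},e_{i}\rangle \;=\; \int_{\mathcal H}\|x-m_{e}\|^{2}\,d\bar\mu(x) \;<\; \infty
\end{equation*}
for any orthonormal basis $\{e_{i}\}$, by monotone convergence and Fubini. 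Thus $T$ is trace class, and $S:=T^{1/2}$ is a nonnegative selfadjoint Hilbert-Schmidt operator with $v(h)=\langle Th,h\rangle=\|Sh\|^{2}$, giving the claimed form.

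The main obstacle is the passage from bare $\sigma$-additivity of $\bar\mu$ to finite second moment. Prokhorov tightness alone gives only concentration on compact sets, not integrability of $\|x\|^{2}$, and without such integrability the covariance operator is not even well-defined, let alone trace class. The Gaussian structure is essential here: Fernique's theorem upgrades tightness to exponential integrability of $\|x\|$, and hence to all polynomial moments, which is precisely what is needed. Everything else --- diagonalization of $T$, $L^{2}$-convergence of the series representation, and identification of the characteristic functional via independence --- is routine once this integrability step is in hand.
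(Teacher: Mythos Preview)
The paper does not supply its own proof of this lemma; it simply attributes the result to Sato \cite{Sato1969} (specifically Lemma~6 there) and states it without argument. So there is no in-paper proof to compare against.

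Your argument is correct and complete. The sufficiency direction via the Karhunen--Lo\`eve-type series $m_{e}+\sum_{i}\sqrt{\lambda_{i}}\beta_{i}e_{i}$ is the standard construction, and the necessity direction via Fernique's theorem is clean: once $\bar\mu$ is a Borel probability on the Polish space ${\mathcal H}$ it is automatically Radon, the cylindrical Gaussian hypothesis makes $\bar\mu$ Gaussian in the Banach-space sense, and Fernique then gives $\int\|x\|^{2}\,d\bar\mu<\infty$, forcing the covariance to be trace class and $S=T^{1/2}$ to be Hilbert--Schmidt. One chronological remark worth knowing: Sato's paper is 1969 while Fernique's inequality is 1970, so the original proof necessarily proceeds differently --- typically through Sazonov's theorem (continuity of the characteristic functional in the Sazonov topology is equivalent to $\sigma$-additivity, and for a Gaussian characteristic functional this continuity is exactly the Hilbert--Schmidt condition on $S$) or through a direct tightness argument on finite-dimensional projections. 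Your Fernique route is shorter and more conceptual, at the cost of invoking a nontrivial black box; the Sazonov route is more elementary and self-contained but requires setting up the $S$-topology machinery.
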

The canonical Gauss measure $\bm$ corresponds to $m_{e}=0$ and $S=I$ and since the identity operator $I$ is not Hilbert-Schmidt, the cannonical Gauss measure $\bm$ is only finitely additive (See also Chapter-I, Proposition 4.1 of Kuo\cite{Kuo1975}). 

The identity map $e$ on ${\mathcal H}$, considered as a map from $({\mathcal H}, {\mathcal C}, \bn)$ in to $({\mathcal H}, {\mathcal C})$ is called the Gaussian white noise.

Let us now start with the abstract version of the white noise filtering model as formulated by Kallianpur and Karandikar \cite{Kallianpur1988}:
\begin{equation}
	y=\zeta + e \label{eqn6.3}
\end{equation}
where $\zeta$ is an ${\mathcal H}$-valued random variable defined on a countably additive probability space $(\Omega, {\mathcal F}, m)$, independent of $e$. To formulate this mathematically precisely let ${\mathcal E}={\mathcal H}\times \Omega$ and
\begin{displaymath}
	\Sigma=\cup_{P\in {\mathcal P}} {\mathcal C}_{P}\otimes {\mathcal F}  \mbox{ where } {\mathcal C}_{P}\otimes {\mathcal F} \mbox{ is the product sigma field}.
\end{displaymath}
For $P\in {\mathcal P}$, let $\alpha_{P}$ be the product measure $(\bm\vert {\mathcal C}_{P})\otimes m$ which is countably additive. This defines a unique finitely additive probability measure $\alpha$ on $({\mathcal E}, \Sigma)$ such that $\alpha=\alpha_{P}$ on ${\mathcal C}_{P}\otimes {\mathcal F} $.

Let $e, \zeta, y$ be ${\mathcal H}$-valued maps on ${\mathcal E}$ defined by
\begin{displaymath}
	e(h, \omega)=h,
\end{displaymath}
\begin{displaymath}
	\zeta(h,\omega)=\zeta(\omega),
\end{displaymath}
\begin{equation}
	y(h,\omega)=e(h,\omega)+ \zeta(h,\omega), \hspace{.1in} (h,\omega)\in {\mathcal H}\times \Omega. \label{eqn6.4}
\end{equation}
The model \ref{eqn6.4} is the abstract version of the white noise filtering on $({\mathcal E}, \Sigma, \alpha)$. Our goal is to first characterize the conditional expectation $E[g\vert y]$ in this finitely additive setting. 
\begin{Def}
	If there exists a $v\in {\mathcal L}({\mathcal H}, {\mathcal C}, \bn)$ such that 
	\begin{equation}
		\int_{{\mathcal H}\times \Omega}g(\omega) 1_{C}(y(h,\omega))d\alpha(h,\omega)=\int_{C}v(y)d\bn(y),\hspace{.1in} \forall C\in {\mathcal C},\label{eqn6.5}
	\end{equation}
	then we define $v:=E[g\vert y].$
\end{Def}

Note that the integrand $g(\omega) 1_{C}(y(h,\omega))$ is ${\mathcal C}_{P}\otimes {\mathcal F}$-measurable for $C\in {\mathcal C}_{P}$ and hence the integrals in \ref{eqn6.5} are well-defined.

Let us now state the elegant Bayes formula for the finitely additive white noise framework proved in \cite{Kallianpur1988}:
\begin{Thm} \label{Thm6.5}
	Let $y, \zeta$ be as in \ref{eqn6.4}. Let $g$ be an integrable function on $(\Omega, {\mathcal F}, m)$. Then 
	\begin{equation}
		E[g\vert y] =\frac{\int_{\Omega}g(\omega)exp{\left \{\langle y,\zeta(\omega)\rangle-\frac{1}{2}\|\zeta(\omega)\|^{2}\right\}}dm(\omega)}{\int_{\Omega}exp{\left \{\langle y,\zeta(\omega)\rangle -\frac{1}{2}\|\zeta(\omega)\|^{2}\right\}}dm(\omega)}. \label{eqn6.6}
	\end{equation}
\end{Thm}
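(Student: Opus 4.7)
The plan is to verify the defining property (6.5) directly for the candidate
\[
v(y) := \Phi(y)/\Phi_{1}(y), \qquad \Phi(y) := \int_{\Omega} g(\omega)\, e^{\langle y, \zeta(\omega)\rangle - \frac{1}{2}\|\zeta(\omega)\|^{2}}\, dm(\omega),
\]
with $\Phi_{1}$ the same integral taken with $g \equiv 1$, by combining a Fubini argument on finite-dimensional cylinders with the classical Cameron--Martin translation and a limiting procedure in the Gross integral framework of Definitions \ref{D6.1}--\ref{D6.2}.

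First, fix a cylinder set $C = P^{-1}B \in \mathcal{C}_{P}$. On $\mathcal{C}_{P} \otimes \mathcal{F}$ the measure $\alpha$ coincides with the countably additive product $(\bm|_{\mathcal{C}_{P}}) \otimes m$, so ordinary Fubini yields
\[
\int_{\mathcal{E}} g(\omega)\, 1_{C}(h + \zeta(\omega))\, d\alpha(h,\omega) = \int_{\Omega} g(\omega) \Bigl(\int_{\mathcal{H}} 1_{B}(Ph + P\zeta(\omega))\, d\bm(h)\Bigr) dm(\omega).
\]
The inner integral depends only on $Ph$ and lives on the finite-dimensional space $\mathrm{range}(P)$, where $\bm$ restricts to a standard Gaussian, and the classical Cameron--Martin translation gives
\[
\int_{\mathcal{H}} 1_{C}(h + \zeta(\omega))\, d\bm(h) = \int_{\mathcal{H}} 1_{C}(h) \exp\bigl\{\langle Ph, \zeta(\omega)\rangle - \tfrac{1}{2}\|P\zeta(\omega)\|^{2}\bigr\}\, d\bm(h).
\]

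Next, pass to the limit along the net $P \uparrow I$ in $\mathcal{P}$. Since $P\zeta(\omega) \to \zeta(\omega)$ in $\mathcal{H}$, one has $\|P\zeta(\omega)\|^{2} \to \|\zeta(\omega)\|^{2}$, and under any representative $L$ of the weak distribution of $\bm$ the linear functional $h \mapsto \langle Ph, \zeta(\omega)\rangle$ corresponds to $L(P\zeta(\omega))$, which is Cauchy in $L^{2}(m_{1})$ with limit $L(\zeta(\omega))$. Hence the tame exponentials are Cauchy in $m_{1}$-measure, placing $y \mapsto \exp\{\langle y, \zeta(\omega)\rangle - \tfrac{1}{2}\|\zeta(\omega)\|^{2}\}$ in $\mathcal{L}(\mathcal{H}, \mathcal{C}, \bm)$ (Definition \ref{D6.1}) and identifying the Cameron--Martin right-hand side with its Gross integral $\int_{C} e^{\langle y, \zeta(\omega)\rangle - \frac{1}{2}\|\zeta(\omega)\|^{2}} d\bm(y)$. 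A second (monotone) Fubini, obtained after truncating on $\{\|\zeta(\omega)\| \le N\}$ and letting $N \to \infty$, then produces
\[
\int_{\mathcal{E}} g \cdot 1_{C}\, d\alpha = \int_{C} \Phi(y)\, d\bm(y).
\]
Specialising $g \equiv 1$ gives $\bn(C) = \int 1_{C}\, d\alpha = \int_{C} \Phi_{1}(y)\, d\bm(y)$, which identifies the cylinder distribution $\bn$ of $y$ as $\Phi_{1}\bm$; comparing $\int_{C} \Phi\, d\bm = \int_{C} v(y) \Phi_{1}(y)\, d\bm(y)$ for every $C \in \mathcal{C}$ forces $v = \Phi/\Phi_{1}$, which is exactly (6.6).

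The main obstacle lies in making the two limit/exchange steps rigorous inside the Segal--Gross finitely additive framework, where the usual dominated- and monotone-convergence theorems are not immediately available. One must verify that the dominating envelopes obtained after truncating on $\{\|\zeta\| \le N\}$ are tame-approximable in $m_{1}$-measure so that the Gross integral of Definition \ref{D6.2} commutes with $m$-integration, and confirm that $\Phi_{1}(y) > 0$ for $\bm$-a.e.\ $y$ (which is immediate, since the integrand is strictly positive and $m$ is a probability) so that the division defining $v$ is well-posed; both are essentially standard in the Kallianpur--Karandikar white noise calculus on which the final step relies.
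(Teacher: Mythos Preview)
The paper does not prove this theorem; it merely states the Bayes formula and cites Kallianpur--Karandikar \cite{Kallianpur1988}, so there is no ``paper's own proof'' to compare against. Your outline is in fact the standard route taken in that reference, and the overall strategy---finite-dimensional Cameron--Martin translation followed by a Fubini/limit argument in the Gross framework---is correct.

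One technical simplification is worth pointing out. You present the identification of the Cameron--Martin right-hand side with the Gross integral $\int_{C} e^{\langle y,\zeta(\omega)\rangle-\frac{1}{2}\|\zeta(\omega)\|^{2}}\,d\bm(y)$ as requiring a limit along $P\uparrow I$, and then flag the interchange of limit and integral as the main obstacle. In fact no limit is needed for this step. Fix $C\in\mathcal{C}_{P_{0}}$. Under any representative $L$ of the canonical Gauss weak distribution, $\tilde{1_{C}}$ is a function of $L$ restricted to $\mathrm{range}(P_{0})$, and $L(\zeta(\omega))=L(P_{0}\zeta(\omega))+L((I-P_{0})\zeta(\omega))$ with the two summands independent. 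Since $E_{m_{1}}\bigl[e^{L((I-P_{0})\zeta(\omega))-\frac{1}{2}\|(I-P_{0})\zeta(\omega)\|^{2}}\bigr]=1$, the Gross integral factorises exactly as
\[
\int_{C} e^{\langle y,\zeta(\omega)\rangle-\frac{1}{2}\|\zeta(\omega)\|^{2}}\,d\bm(y)
=\int_{\Omega_{1}}\tilde{1_{C}}\,e^{L(P_{0}\zeta(\omega))-\frac{1}{2}\|P_{0}\zeta(\omega)\|^{2}}\,dm_{1},
\]
which is precisely your finite-dimensional Cameron--Martin expression. This removes the delicate net-limit inside the integral and reduces the argument to a single genuine Fubini on the countably additive product $(\bm|_{\mathcal{C}_{P_{0}}})\otimes m$; the only limiting step that remains is the (easy) verification that $y\mapsto e^{\langle y,\zeta(\omega)\rangle-\frac{1}{2}\|\zeta(\omega)\|^{2}}$ lies in $\mathcal{L}(\mathcal{H},\mathcal{C},\bm)$, which you already handled correctly.
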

We now return to our stochastic wave equation \ref{eqn:1.1} where the state variable $X(t)$ is an $(H, {\mathcal B}(H))$-valued stochastic process (where ${\mathcal B}(H)$ is the Borel algebra generated by the closed or open subsets of $H$) that is defined on the probability space $(\Omega, {\mathcal F},m)$. Let ${\mathcal K}$ be a separable Hilbert space (including $\mR^{N}$) and let the observation vector $\zeta=\zeta(X(t))$ be a measurable function from $H\rightarrow {\mathcal K}$ such that 
\begin{equation}
	\int_{0}^{T}\|\zeta(X(t))\|_{\mathcal K}^{2}dt<\infty, \mbox{ a.s.}. \label{eqn6.7}
\end{equation}
The Hilbert space ${\mathcal H} $ we started with in the abstract white noise formulation will be in this case ${\mathcal H}=L^{2}(0,T;{\mathcal K})$.

The white noise sensor measurement model:

\begin{equation}
	Y(t) =\zeta(X(t))+ e(t) \in {\mathcal K},\hspace{.1in} t>0, \label{equn6.8}
\end{equation}
where $e(t)\in {\mathcal K}$ is a  finite or infinite dimensional white noise. 

Let us define a Borel measure $\rho^{Y}_{t}(\cdot)\in {\mathcal M}(H)$ and a probability measure $\pi_{t}^{Y}(\cdot)\in {\mathcal P}(H)$ as follows:
\begin{displaymath}
	\rho^{Y}_{t}(B)	=E\left [ 1_{B}(X(t))\exp{\int_{0}^{t}\left \{\langle \zeta(X(s)), Y(s)\rangle_ {\mathcal K}-\frac{1}{2}\|\zeta(X(s))\|^{2}_{\mathcal K} \right \}ds}\right ],
\end{displaymath}
for $B\in {\mathcal B}(H)$, and
\begin{displaymath}
	\pi^{Y}_{t}(B)=\frac{\rho^{Y}_{t}(B)}{\rho^{Y}_{t}(H)}.
\end{displaymath}
Then the measures $\rho^{Y}_{t}\in {\mathcal M}(H)$ and $\pi_{t}^{Y}\in {\mathcal P}(H)$ satisfy (by Theorem \ref{Thm6.5}): 
\begin{equation}
	\langle\pi_{t}^{Y},f\rangle=\int_{H}f(x)\pi^{Y}_{t}(dz)=E\left [ f(X(t))\vert Y(s), 0\leq s\leq t \right ], \label{eqn6.9}
\end{equation}

\begin{displaymath}
	\langle\pi_{t}^{Y}, f\rangle = \frac{\langle\rho^{Y}_{t}, f\rangle}{\langle\rho_{t}^{Y},1\rangle},
\end{displaymath}

\begin{displaymath}
	\langle\rho_{t}^{Y}, f\rangle= E\left \{ f(X(t)) \exp{\int_{0}^{t}{\mathfrak{C}}_{s}^{Y}(X(s))ds }\right \},
\end{displaymath}

where  
\begin{displaymath}
	{\mathfrak C}_{s}^{Y}(X)=\langle\zeta(X(s)), Y(s)\rangle_{\mathcal K}-\frac{1}{2}\|\zeta(X(s))\|^{2}_{\mathcal K}.
\end{displaymath}

The following series of theorems involve equivalence of finitely additive nonlinear filtering equations, uniqueness of measure-valued filter, Markov property and robustness. The proofs of these theorems are similar to those of Szpirglas \cite{Szpirglas1978} and those of Kallianpur and Karandikar \cite{Kallianpur1988} and hence not repeated here.

\begin{Thm}
	For $Y\in C([0,T];{\mathcal H})$ and the class of measures $\rho_{t}^{Y}\in {\mathcal M}(H)$, $0\leq t\leq T$ uniquely solves
	\begin{equation}
		\langle \rho_{t}^{Y},f\rangle=\langle\rho_{0},f\rangle+\int_{0}^{t}\langle\rho_{s}^{Y},{\mathcal A}f+{\mathfrak C}_{s}^{Y}f\rangle ds, \hspace{.05in} \forall f\in D({\mathcal A}),\label{eqn6.10}
	\end{equation}	
	or equivalently, $\rho_{t}^{Y}\in {\mathcal M}(H)$, $0\leq t\leq T$ uniquely solves	
	\begin{equation}
		\langle\rho_{t}^{Y},f\rangle=\langle \rho_{0},P_{t}f\rangle+\int_{0}^{t}\langle\rho_{s}^{Y},{\mathfrak C}_{s}^{Y}(P_{t-s}f)\rangle ds, \hspace{.05in}\forall f\in {\mathcal B}_{b}(H). \label{eqn6.11}
	\end{equation}	
	
\end{Thm}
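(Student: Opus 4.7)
The plan is to derive both forms in parallel from the definition
$\langle \rho_t^Y, f\rangle = E\!\left[f(X(t))\,\Lambda_t^Y\right]$ with $\Lambda_t^Y := \exp\!\left(\int_0^t \mathfrak{C}_s^Y(X(s))\,ds\right)$, and then reduce uniqueness of (6.10) to uniqueness of the mild-form Volterra equation (6.11). A crucial simplification compared to the It\^o-calculus filter of Section 5 is that, in the white-noise framework, $Y$ is a \emph{fixed} element of $C([0,T];\mathcal{H})$ and is not integrated against in the expectations, so $\Lambda_t^Y$ is, conditionally on the realization of $Y$, a process of bounded variation in $t$.

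First, to derive (6.10), I would apply the It\^o product rule to $f(X(t))\Lambda_t^Y$ for $f\in D(\mathcal{A})$. Because $\Lambda_t^Y$ is absolutely continuous with $d\Lambda_t^Y = \Lambda_t^Y\,\mathfrak{C}_t^Y(X(t))\,dt$ and has no martingale component, the quadratic covariation with $f(X(t))$ vanishes, and Proposition (on $M_t(f)=f(X(t))-f(x)-\int_0^t\mathcal{A}f(X(s))\,ds$) gives
\[
f(X(t))\Lambda_t^Y = f(X(0)) + \int_0^t \Lambda_s^Y\bigl(\mathcal{A}f(X(s))+\mathfrak{C}_s^Y(X(s))f(X(s))\bigr)\,ds + \int_0^t \Lambda_s^Y\,dM_s(f).
\]
Taking expectation kills the martingale and produces exactly (6.10). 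To get (6.11), I would instead integrate $\Lambda_t^Y = 1 + \int_0^t \Lambda_s^Y \mathfrak{C}_s^Y(X(s))\,ds$ against $f(X(t))$, apply Fubini, and condition with respect to $\mathcal{F}_s^X$, using the Markov semigroup identity $E[f(X(t))\mid \mathcal{F}_s^X]=(P_{t-s}f)(X(s))$. Equivalence of the two forms then follows from the backward Kolmogorov identity $(\partial_s + \mathcal{A})P_{t-s}f = 0$: substituting the time-dependent test function $\phi_s := P_{t-s}f$ into (6.10) eliminates the $\mathcal{A}$ term and yields (6.11), and the converse follows by formal differentiation in $t$.

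For uniqueness, I would start with the mild form (6.11), which is a linear Volterra equation in the unknown $\rho^Y \in C([0,T];\mathcal{M}(H))$. Since $P_{t-s}$ is a contraction on $\mathcal{B}_b(H)$ and, under the integrability hypothesis (6.7) on $\zeta(X(\cdot))$, the map $s\mapsto \|\mathfrak{C}_s^Y(\cdot)\|$ is locally controlled, a standard iterated estimate of the form $|\langle \rho^{(1)}_t - \rho^{(2)}_t, f\rangle| \leq C(T,Y)\,\|f\|_\infty \int_0^t \|\rho^{(1)}_s - \rho^{(2)}_s\|_{TV}\,ds$ combined with Gronwall's inequality in total-variation norm forces the two candidate solutions to coincide on all test functions $f\in\mathcal{B}_b(H)$. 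Uniqueness for the strong form (6.10) is then imported from (6.11) via the equivalence step: given two solutions of (6.10) with the same initial datum, plugging in $\phi_s = P_{t-s}f$ (which lies in the appropriate domain for each $s\in[0,t]$, by the Feller-continuity of $P_s$ inherited from Theorems \ref{Thm:1.2}--\ref{Thm:1.4}) shows each of them satisfies (6.11), and Lemma (the density statement on $\mathcal{D}_2(\mathcal{A})$) upgrades equality on $D(\mathcal{A})$ to equality as Borel measures on $H$.

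The main obstacle will be the legality of using the time-dependent test function $\phi_s = P_{t-s}f$ in (6.10) when $f$ is merely bounded Borel. One must verify that $\phi_s \in D(\mathcal{A})$ for a.e.\ $s$, that $s\mapsto\mathcal{A}\phi_s$ is sufficiently integrable to justify Fubini, and that the resulting identity extends by weak-convergence approximation (e.g.\ along resolvent smoothings $f \leadsto \lambda(\lambda - \mathcal{A})^{-1}f$) from $D(\mathcal{A})$ to all of $\mathcal{B}_b(H)$. Because $\mathfrak{C}_s^Y$ depends on $Y$ only through a continuous $\mathcal{H}$-valued path, this last step is handled by approximating $Y$ by smooth paths $Y^\varepsilon$, applying the above arguments pathwise, and passing to the limit using continuity of $Y\mapsto \rho_t^Y$ inherited from the exponential form of $\Lambda_t^Y$ and dominated convergence, in the style of Szpirglas and Kallianpur--Karandikar.
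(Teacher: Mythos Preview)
Your proposal is correct and matches the approach the paper adopts by reference: the paper does not supply its own proof of this theorem but states explicitly that ``the proofs of these theorems are similar to those of Szpirglas \cite{Szpirglas1978} and those of Kallianpur and Karandikar \cite{Kallianpur1988} and hence not repeated here.'' Your outline---derive (6.10) by the product rule on $f(X(t))\Lambda_t^Y$ with $\Lambda_t^Y$ of bounded variation for fixed $Y$, derive (6.11) via the Markov property and Fubini, pass between the two forms through the time-dependent test function $\phi_s=P_{t-s}f$ and the density lemma on $\mathcal{D}_2(\mathcal{A})$, and establish uniqueness by a Volterra--Gronwall argument on the mild form (6.11)---is precisely the Szpirglas/Kallianpur--Karandikar route the paper is invoking.
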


\begin{Thm}
	For $Y\in C([0,T];{\mathcal H})$ the probability measure valued process $\pi_{t}^{Y}\in {\mathcal P}(H)$, $0\leq t\leq T$, uniquely solves
	\begin{displaymath}
		\langle\pi_{t}^{Y},f\rangle=\langle\pi_{0},f\rangle
	\end{displaymath}
	\begin{equation}
		+\int_{0}^{t}\left [\langle \pi_{s}^{Y},{\mathcal A}f 	+{\mathfrak C}_{s}^{Y}f\rangle -\langle \pi_{s}^{Y},{\mathfrak C}_{s}^{Y}\rangle \langle \pi_{s}^{Y},f\rangle\right ] ds,\forall 	 f\in D({\mathcal A}), \label{eqn6.12}
	\end{equation}
	or equivalently $\pi_{t}^{Y}\in {\mathcal P}(H)$ uniquely solves	
	\begin{displaymath}
		\langle\pi_{t}^{Y},f\rangle =\langle\pi_{0},P_{t}f\rangle +
	\end{displaymath}
	\begin{equation}
		\int_{0}^{t}\left [\langle \pi_{s}^{Y},{\mathfrak C}_{s}^{Y}(P_{t-s}f)\rangle-\langle\pi_{s}^{Y},{\mathfrak C}_{s}^{Y}\rangle \langle \pi_{s}^{Y},(P_{t-s}f)\rangle\right ] ds, \forall 	f\in {\mathcal B}_{b}(H). \label{eqn6.13}
	\end{equation}
\end{Thm}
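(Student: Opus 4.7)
The plan is to reduce both the existence-verification and the uniqueness assertion for the probability measure-valued filter $\pi_t^Y$ to the corresponding facts already established for the unnormalized measure $\rho_t^Y$, by means of the pathwise normalization $\pi_t^Y(f)=\rho_t^Y(f)/\rho_t^Y(1)$. Since the observation path $Y\in C([0,T];{\mathcal H})$ is fixed and the white-noise framework is pathwise deterministic, the identity
\[
\rho_t^Y(1)=1+\int_0^t \rho_s^Y({\mathfrak C}_s^Y)\,ds,
\]
combined with Gronwall applied to (6.10) (using the local boundedness of ${\mathfrak C}_s^Y$ guaranteed by (6.7)), delivers a positive, continuous denominator, so the quotient is well-defined on $[0,T]$.

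To verify that this $\pi_t^Y$ solves (6.12), I would apply the product rule to $\pi_t^Y(f)=\rho_t^Y(f)/\rho_t^Y(1)$, substituting (6.10) for the numerator and (6.10) with $f\equiv 1$ (noting ${\mathcal A}1=0$) for the denominator, to obtain
\[
\frac{d}{dt}\pi_t^Y(f)=\pi_t^Y({\mathcal A}f+{\mathfrak C}_t^Y f)-\pi_t^Y(f)\,\pi_t^Y({\mathfrak C}_t^Y),
\]
whose integrated form is precisely (6.12).

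For uniqueness, suppose $\pi^{(1)}_t$ and $\pi^{(2)}_t$ are two $\mathcal{P}(H)$-valued solutions of (6.12) sharing the initial condition $\pi_0$. I would invert the above passage by defining
\[
\rho^{(i)}_t(f):=\pi^{(i)}_t(f)\exp\Bigl\{\int_0^t \pi^{(i)}_s({\mathfrak C}_s^Y)\,ds\Bigr\},\qquad i=1,2,
\]
so that a direct product-rule calculation using (6.12) for $\pi^{(i)}$ shows that each $\rho^{(i)}$ satisfies the unnormalized equation (6.10) with initial datum $\pi_0$. The uniqueness part of the preceding theorem then forces $\rho^{(1)}=\rho^{(2)}$, and dividing by the common denominator $\rho^{(i)}_t(1)$ recovers $\pi^{(1)}=\pi^{(2)}$.

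The equivalence between (6.12) and (6.13) follows the scheme of (5.15): for $f\in \mathcal{B}_b(H)$, the backward semigroup orbit $\Phi_s:=P_{t-s}f$ satisfies $(\partial_s+{\mathcal A})\Phi_s=0$, so substituting it as a time-dependent test function into the differentiated form of (6.12) cancels the generator term and leaves exactly the integrand of (6.13). The principal technical obstacle is legitimizing $\Phi_s$ as a test function when $f\notin D({\mathcal A})$. My plan is to establish the identity first for $f\in D({\mathcal A}^2)$, where $P_{t-s}f\in D({\mathcal A})$ and $s\mapsto \pi_s^Y(\Phi_s)$ is manifestly differentiable, and then extend to $f\in \mathcal{B}_b(H)$ by approximating with $D({\mathcal A})$-valued functions and invoking dominated convergence via the uniform total-mass bound $\pi_s^Y(1)=1$ and the local boundedness of ${\mathfrak C}_s^Y$.
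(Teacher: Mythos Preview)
Your proposal is correct and is precisely the Szpirglas--Kallianpur--Karandikar normalization/de-normalization argument that the paper invokes. Note that the paper itself supplies no proof of this theorem: it states only that the proofs ``are similar to those of Szpirglas \cite{Szpirglas1978} and those of Kallianpur and Karandikar \cite{Kallianpur1988} and hence not repeated here,'' so your outline is a faithful reconstruction of the omitted argument along the lines the authors point to.
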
	

Let
\begin{displaymath}
	{\mathcal H}_{t}=\left \{ \eta \in {\mathcal H}: \int_{t}^{\infty}\|\eta(r)\|^{2}_{\mathcal K}dr=0\right \}.
\end{displaymath}
Note that ${\mathcal H}_{t}$ is a closed subspace of ${\mathcal H}$. Let $Q_{t}$ be the orthogonal projection onto ${\mathcal H}_{t}$ and let ${\mathcal C}_{Q_{t}}={\mathcal C}({\mathcal H}_{t})$.

\begin{Thm}
	$\pi^{Y}_{t}$ and $\rho^{Y}_{t}$ are respectively ${\mathcal P}(H)$ and ${\mathcal M}(H)$ -valued $\{{\mathcal C}_{Q_{t}}\}$ Markov processes on $({\mathcal H}, {\mathcal C}, \bn)$.
\end{Thm}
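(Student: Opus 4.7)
The plan is to establish the Markov property through a restarting (cocycle) identity for the filter, combined with the orthogonal-decomposition independence of the Gaussian white noise $e$ and the Markov property of the signal $X$. Throughout, I mirror the Szpirglas \cite{Szpirglas1978} and Kallianpur--Karandikar \cite{Kallianpur1988} scheme already invoked for the preceding theorems.

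First, I would establish a restarting identity: for any $0\leq s\leq t\leq T$ and $f\in\mathcal{B}_{b}(H)$,
\begin{equation*}
\langle\rho_{t}^{Y},f\rangle=\langle\rho_{s}^{Y},P_{t-s}f\rangle+\int_{s}^{t}\langle\rho_{r}^{Y},\mathfrak{C}_{r}^{Y}(P_{t-r}f)\rangle\,dr,
\end{equation*}
and analogously for $\pi_{t}^{Y}$ using \eqref{eqn6.13}. This is obtained by observing that the right-hand side solves the same semigroup-form integral equation \eqref{eqn6.11} on $[s,t]$ with initial datum $\rho_{s}^{Y}$ and driving observation $Y|_{[s,t]}$, so by the uniqueness established in the previous theorem it coincides with $\rho_{t}^{Y}$. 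This yields measurable cocycle maps $\Psi_{s,t}^{\rho}$ and $\Psi_{s,t}^{\pi}$ with $\rho_{t}^{Y}=\Psi_{s,t}^{\rho}(\rho_{s}^{Y},Y|_{[s,t]})$ and $\pi_{t}^{Y}=\Psi_{s,t}^{\pi}(\pi_{s}^{Y},Y|_{[s,t]})$; in particular, these future values depend on the observation only through $(I-Q_{s})Y$.

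Second, I would invoke the independence structure of the canonical Gauss measure $\mathbf{n}$. Since $\mathcal{H}=\mathcal{H}_{s}\oplus\mathcal{H}_{s}^{\perp}$, the characteristic functional of $\mathbf{n}$ factorizes across the projections $Q_{s}$ and $I-Q_{s}$, so the cylindrical random variables $Q_{s}e$ and $(I-Q_{s})e$ are independent in the weak-distribution sense of Definition \ref{D6.1}. Coupled with the Markov property of $X$ on the countably additive factor $(\Omega,\mathcal{F},m)$---namely that the law of $X|_{[s,T]}$ given $\mathcal{F}_{s}$ depends only on $X(s)$---this shows that the conditional law of $(I-Q_{s})Y$ given $\mathcal{C}_{Q_{s}}\otimes\mathcal{F}_{s}$ is a measurable function of $X(s)$ alone.

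Third, I would assemble these pieces via the finitely additive Bayes formula of Theorem \ref{Thm6.5}. Conditional on $\mathcal{C}_{Q_{s}}$ (that is, on the observation increments in $\mathcal{H}_{s}$), the posterior law of $X(s)$ is by construction $\pi_{s}^{Y}$, while $\rho_{s}^{Y}$ differs from $\pi_{s}^{Y}$ only through a $\mathcal{C}_{Q_{s}}$-measurable normalising constant. Substituting $\rho_{t}^{Y}=\Psi_{s,t}^{\rho}(\rho_{s}^{Y},Y|_{[s,t]})$ into $E[F(\rho_{t}^{Y})\vert\mathcal{C}_{Q_{s}}]$ for $F\in C_{b}(\mathcal{M}(H))$, and integrating out $Y|_{[s,t]}$ against its conditional law (which, by the previous paragraph, is a functional of $\pi_{s}^{Y}$), produces a measurable functional of $\rho_{s}^{Y}$ alone. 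Hence
\begin{equation*}
E\bigl[F(\rho_{t}^{Y})\,\big|\,\mathcal{C}_{Q_{s}}\bigr]=\Pi_{s,t}F(\rho_{s}^{Y}),\qquad E\bigl[F(\pi_{t}^{Y})\,\big|\,\mathcal{C}_{Q_{s}}\bigr]=\widetilde{\Pi}_{s,t}F(\pi_{s}^{Y}),
\end{equation*}
which is the Markov property for the $\{\mathcal{C}_{Q_{t}}\}$-adapted processes $\rho_{\cdot}^{Y}$ and $\pi_{\cdot}^{Y}$.

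The main obstacle is the finitely additive nature of $(\mathcal{H},\mathcal{C},\mathbf{n})$: conditional expectations and ``independent increments'' cannot be used off-the-shelf and must be handled through Definition \ref{D6.1}--\ref{D6.2} and the Bayes formula \eqref{eqn6.6}. The cleanest way to circumvent this is the reduction to the Szpirglas framework already cited, where the identical cocycle-plus-independence argument is executed within the weak-distribution formalism; the novelty here is only that the signal space is an infinite-dimensional Hilbert space $H$ and the semigroup $P_{t}$ is that of the mild solution constructed in Theorem \ref{Thm:3.1}, but the Markov-property argument is insensitive to these changes once the uniqueness and restarting identity are in hand.
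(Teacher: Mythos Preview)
The paper does not actually supply a proof of this theorem; it states beforehand that the proofs of this block of results ``are similar to those of Szpirglas \cite{Szpirglas1978} and those of Kallianpur and Karandikar \cite{Kallianpur1988} and hence not repeated here.'' Your proposal is precisely a sketch of the Kallianpur--Karandikar argument (restarting/cocycle identity from the uniqueness theorem, orthogonal-split independence of the canonical Gauss measure, and the finitely additive Bayes formula), so it is correct and is exactly the approach the paper invokes by reference; the only caveat is that the Szpirglas paper itself lives in the countably additive It\^o setting, so the relevant template for the finitely additive manipulations here is really \cite{Kallianpur1988} rather than \cite{Szpirglas1978}.
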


\begin{Thm}
	If $Y_{n}\rightarrow Y$ in ${\mathcal H}$ then $\pi_{t}^{Y_{n}}\rightarrow \pi_{t}^{Y}$ and $\rho_{t}^{Y_{n}}\rightarrow \rho_{t}^{Y}$ in total variation norm.
\end{Thm}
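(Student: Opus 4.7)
The plan is to exploit the explicit Kallianpur--Striebel representation
\[
\langle \rho_t^Y, f\rangle = E\left[f(X(t))\exp\left\{\int_0^t \mathfrak{C}_s^Y(X(s))\,ds\right\}\right]
\]
derived just before the theorem, where $\mathfrak{C}_s^Y(X)=\langle\zeta(X(s)),Y(s)\rangle_{\mathcal{K}}-\tfrac12\|\zeta(X(s))\|_{\mathcal{K}}^2$, and establish continuity of the map $Y\mapsto \rho_t^Y$ by direct estimation, avoiding the uniqueness machinery altogether. Since $\mathfrak{C}_s^Y$ depends on $Y$ only through the inner product, which is linear in $Y$, I would factor
\[
\exp\Bigl\{\int_0^t \mathfrak{C}_s^{Y_n}(X(s))\,ds\Bigr\} = \exp\Bigl\{\int_0^t \langle\zeta(X(s)),Y_n(s)-Y(s)\rangle_{\mathcal{K}}\,ds\Bigr\}\cdot \exp\Bigl\{\int_0^t \mathfrak{C}_s^Y(X(s))\,ds\Bigr\}.
\]
By Cauchy--Schwarz the exponent in the first factor is bounded a.s.\ by $\|\zeta(X(\cdot))\|_{L^2(0,T;\mathcal{K})}\|Y_n-Y\|_{\mathcal{H}}$, which tends to zero by (\ref{eqn6.7}) and the hypothesis $Y_n\to Y$ in $\mathcal{H}$.

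Next, for any bounded Borel $f$ with $|f|\le 1$, I would write
\[
|\langle \rho_t^{Y_n}-\rho_t^Y, f\rangle| \le E\left[\Bigl|\exp\Bigl\{\int_0^t \langle\zeta(X(s)),Y_n(s)-Y(s)\rangle_{\mathcal{K}}\,ds\Bigr\}-1\Bigr|\,\exp\Bigl\{\int_0^t \mathfrak{C}_s^Y(X(s))\,ds\Bigr\}\right],
\]
and observe that the bound is independent of $f$. Using $|e^a-1|\le |a|e^{|a|}$ together with the Cauchy--Schwarz estimate above and the uniform boundedness $\sup_n\|Y_n\|_{\mathcal{H}}<\infty$, dominated convergence drives the right-hand side to zero. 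Taking $\sup_{\|f\|_\infty\le 1}$ on the left then yields $\|\rho_t^{Y_n}-\rho_t^Y\|_{TV}\to 0$. For the probability measures, the Bayes relation $\pi_t^Y=\rho_t^Y/\rho_t^Y(H)$ combined with the strict positivity $\rho_t^Y(H)>0$ (the integrand is positive a.s.) gives the elementary bound
\[
\|\pi_t^{Y_n}-\pi_t^Y\|_{TV}\le \frac{\|\rho_t^{Y_n}-\rho_t^Y\|_{TV}}{\rho_t^{Y_n}(H)} + \frac{|\rho_t^{Y_n}(H)-\rho_t^Y(H)|}{\rho_t^{Y_n}(H)},
\]
which tends to zero since $\rho_t^{Y_n}(H)\to \rho_t^Y(H)>0$ (so the denominators are eventually bounded away from zero).

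The main obstacle is justifying dominated convergence: one needs an $m$-integrable envelope for the expectand. Combining $|e^a-1|\le |a|e^{|a|}$ with $\sup_n\|Y_n\|_{\mathcal{H}}\le M$ reduces the task to finiteness of expressions of the form $E\bigl[\|\zeta(X(\cdot))\|_{L^2(0,T;\mathcal{K})}\exp\{C\|\zeta(X(\cdot))\|_{L^2(0,T;\mathcal{K})}\}\bigr]$. Under a linear-growth hypothesis $\|\zeta(x)\|_{\mathcal{K}}\le C(1+\|x\|)$ (the analogue of the growth condition on $h$ mentioned after Theorem 4.4), this reduces further to exponential moments of $\|X(\cdot)\|_{L^2(0,T;H)}$, which must be extracted from the mild-solution bound in Theorem \ref{Thm:3.1} together with the fine structure of the driving Wiener/L\'evy martingale. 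This interplay between observation growth and exponential signal moment control is where the technical effort concentrates, serving as the white-noise analogue of a Novikov-type condition in the It\^o-calculus filtering theory.
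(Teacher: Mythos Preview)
Your overall strategy---working directly from the Kallianpur--Striebel representation of $\rho_t^Y$, bounding $\sup_{\|f\|_\infty\le 1}|\langle\rho_t^{Y_n}-\rho_t^Y,f\rangle|$ by a single $f$-free expectation, and then passing to $\pi_t^Y$ via the normalization---is exactly the route taken in Kallianpur and Karandikar \cite{Kallianpur1988}, which is what the paper invokes in lieu of a proof. So the approach matches the reference the paper defers to.

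However, the ``main obstacle'' you flag is illusory, and your proposed resolution (exponential moments of $\|X\|_{L^2(0,T;H)}$ via a Novikov-type condition) is both unnecessary and unavailable in the generality of the paper. The point you are missing is that the quadratic damping $-\tfrac12\int_0^t\|\zeta(X(s))\|_{\mathcal K}^2\,ds$ already sits inside the exponent $\mathfrak{C}^Y$. Writing the integrand as $|e^{\mathfrak{C}^{Y_n}}-e^{\mathfrak{C}^{Y}}|$ and using Cauchy--Schwarz on each inner-product term, one has for any $y\in\mathcal H$
\[
\int_0^t\mathfrak{C}_s^{y}(X(s))\,ds \;\le\; \|\zeta(X(\cdot))\|_{L^2(0,t;\mathcal K)}\,\|y\|_{\mathcal H}-\tfrac12\|\zeta(X(\cdot))\|_{L^2(0,t;\mathcal K)}^2 \;\le\; \tfrac12\|y\|_{\mathcal H}^2,
\]
so $e^{\mathfrak{C}^{Y_n}}\le \exp(\tfrac12\sup_n\|Y_n\|_{\mathcal H}^2)$ and $e^{\mathfrak{C}^{Y}}\le \exp(\tfrac12\|Y\|_{\mathcal H}^2)$ are bounded by \emph{deterministic} constants, uniformly in $n$ and $\omega$. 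Pointwise convergence $e^{\mathfrak{C}^{Y_n}}\to e^{\mathfrak{C}^{Y}}$ holds $m$-a.s.\ by the Cauchy--Schwarz bound on the difference of exponents together with the standing hypothesis \eqref{eqn6.7}. Hence \emph{bounded} convergence (not dominated convergence with an unbounded envelope) gives $\|\rho_t^{Y_n}-\rho_t^Y\|_{TV}\to 0$ with no moment hypothesis on the signal beyond \eqref{eqn6.7}. Your $|e^a-1|\le|a|e^{|a|}$ splitting throws away exactly this quadratic term and thereby creates an artificial need for exponential integrability. Once you keep the $-\tfrac12\|\zeta\|^2$ in place, the argument closes immediately and your final paragraph can be deleted.
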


\section{First-order Approximation of Nonlinear Filter: Infinite dimensional Kalman filter}

 Let us now consider a linear wave equation obtained by linearizing the stochastic nonlinear wave equation \ref{eqn:1.1} by perturbing about a basic state $X_{0}(t)\in D(A)$. Once again the process $X(t)\in H,t\geq 0$ is the signal process and it is governed by the linear stochastic evolution:
\begin{equation}
	dX(t)={\mathcal A}(t)X(t)dt + B dM, \label{eqn:6.1}
\end{equation}
\begin{displaymath}
	X(0)=x,
\end{displaymath}
where ${\mathcal A}(t)=(-iA+ J'(X_{0}(t)))$.

The measurement process $Y(t)\in \mR^{N}, t\geq 0$ is defined as
\begin{equation}
	dY(t)=h(t) X(t)dt + dZ(t),
\end{equation}
where $Z(t)\in \mR^{N}$ is a $N$-dimensional Wiener process which may or may not be correlated with $W(t)$. The observation vector $h(\cdot): [0,T]\rightarrow {\mathcal L}(H;\mR^{N})$ can be taken as $L^{\infty}$ in time. 

 Let us denote by ${\mathcal F}^{Y}_{t}$ the filtration generated by the sensor data over the time period $0\leq s\leq t$ (sigma algebra generated by the back measurement):
\begin{displaymath}
	{\mathcal F}^{Y}_{t}= \sigma \left \{Y_{s}, 0\leq s\leq t\right \}.
\end{displaymath}
The Theorem \ref{Thm:1.5} assures that the deterministic linearized problem is globally uniquely solvable and linear evolution operator associated with ${\mathcal A}(t)=-iA +J'(X_{0}(t))$ as $ {\mathcal S}(t,\tau)$:
\begin{equation}
	\frac{d {\mathcal S}(t, \tau) x}{dt}= {\mathcal A}(t){\mathcal S}(t, \tau) x,
\end{equation}
is well defined as a bounded two parameter semigroup in $H$. We can express the evolution \ref{eqn:6.1} in the mild form as
\begin{equation}
	X(t)= {\mathcal S}(t,0)x+ \int_{0}^{t}S(t,r)B(r)dM(r),
\end{equation}
and deduce its unique global solvability from Theorem \ref{Thm:3.1}. In the Gaussian case we take as before $M(t)=W(t)\in H$ which is a $H$-valued Wiener process and then the mean:
\begin{displaymath}
	E[X(t)]= {\mathcal S}(t,0)x,
\end{displaymath}
and variance
\begin{displaymath}
	V_{t}= \int_{0}^{t}{\mathcal S}(t,r)B(r)QB^{*}(r){\mathcal S}^{*}(t,r)dr.
\end{displaymath}

Let us define the best least square estimate defined by the conditional expectation $\hat{X}(t)=E [X(t)\vert {\mathcal F}^{Y}_{t}]$ and error covariance
\begin{displaymath}
(P(t)\zeta,\eta)=	E[(X(t)-\hat{X}(t),\zeta) ((X(t)-\hat{X}(t),\eta)  ],\hspace{.1in}  \forall \zeta, \eta\in H.
\end{displaymath}
Then it can be shown that (see chapter 10, \cite{Kallianpur1980} for the stochastic calculus case and chapter XII, \cite{Kallianpur1988} for white noise calculus case with the finite dimensional derivation in those references easily generalize to infinite dimensions) that $\hat{X}(t)$ evolves according to
\begin{equation}
	d\hat{X}(t)=[{\mathcal A}(t)-P(t)h^{*}(t)h(t) ]\hat{X}(t)dt +P(t)h^{*}(t) dY(t)
\end{equation}
for the stochastic calculus case and 
\begin{equation}
	\frac{d\hat{X}}{dt}=[{\mathcal A}(t)-P(t)h^{*}(t)h(t) ]\hat{X}(t)+P(t)h^{*}(t)Y(t)
\end{equation}
for the white noise case.
The error covariance $P(t)$ satisfies the operator Riccati equation:
\begin{equation}
	\frac{d}{dt} \langle P(t)x,y\rangle -\langle P(t)x, {\mathcal A}^{*}(t)y\rangle -\langle {\mathcal A}^{*}(t)x, P(t)y\rangle 
+\langle P(t)h^{*}(t)h(t)P(t)x, y\rangle =\langle F x,y\rangle, \mbox{ a.e.}
\end{equation}
\begin{displaymath}
	P(0)=P_{0} \in {\mathcal L}(H), \hspace{.1in} x, y \in D({\mathcal A}^{*}(t)).
\end{displaymath}

Here $F=B Q\B^{*}$ for the Gaussian case and $F=B \Lambda\B^{*}$ for the pure jump case. Let us now give a general solvability theorem for operator Riccati equation that covers both of the above cases. Direct study of infinite dimensional Riccati equations was initiated by J. L. Lions \cite{Lions1971}, R. Temam \cite{Temam1971}, G. Da Prato \cite{DaPrato1972, DaPrato1973}, L. Tartar \cite{Tartar1974} and V. Barbu \cite{Barbu1976} and also H. J. Kuiper \cite{Kuiper1980, Kuiper1985}. 
\begin{Thm}
Let $P_{0}, F\in {\mathcal L}(H)$ and $P_{0}\geq 0$, i.e., $(P_{0}\zeta, \zeta)\geq 0, \forall \zeta \in H$, then there exists a unique positive strongly continuous global solution $P(\cdot)\in C_{s}([0.T];{\mathcal L}(H))$. Moreover, if $P_{0}, F \in {\mathcal J}_{i}(H), i=1,2$ (respectively trace class or Hilbert-Schmidt) then  $P(\cdot)\in C_{s}([0.T];{\mathcal J}_{i}(H)), i=1,2.$ More generally if $P_{0} \in {\mathcal J}_{p}(H)$ (a trace-ideal) then  $P(\cdot)\in C_{s}([0.T];{\mathcal J}_{p}(H))$.
\end{Thm}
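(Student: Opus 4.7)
The plan is to recast the Riccati equation in mild form using the two-parameter evolution $\mathcal{S}(t,s)$ generated by $\mathcal{A}(t)$, establish short-time existence by Banach fixed point, use positivity to obtain a global a priori bound by comparison with a linear Lyapunov equation, and finally rerun the argument in trace-ideal topologies when $P_{0}$ and $F$ lie in $\mathcal{J}_{p}(H)$. Since $\mathcal{A}(t) = -iA + J'(X_{0}(t))$ is a bounded perturbation of the skew-adjoint generator $-iA$, Theorem \ref{Thm:1.4} ensures that $\mathcal{S}(t,s)$ exists as a uniformly bounded two-parameter family on $[0,T]$. A variation-of-parameters calculation recasts the weak Riccati equation as the integral identity
\begin{equation*}
P(t) = \mathcal{S}(t,0)\,P_{0}\,\mathcal{S}^{*}(t,0) + \int_{0}^{t}\mathcal{S}(t,s)\bigl[F - P(s)h^{*}(s)h(s)P(s)\bigr]\mathcal{S}^{*}(t,s)\,ds,
\end{equation*}
which will serve as the working definition of a mild solution.

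For local existence I would define a map $\mathcal{T}$ on a closed ball of $C_{s}([0,T_{0}];\mathcal{L}(H))$ (strongly continuous operator-valued functions) by the right-hand side above. Using the identity
\begin{equation*}
P_{1}h^{*}hP_{1} - P_{2}h^{*}hP_{2} = (P_{1}-P_{2})h^{*}hP_{1} + P_{2}h^{*}h(P_{1}-P_{2}),
\end{equation*}
the quadratic feedback map is Lipschitz on bounded sets, with constant controlled by $\|h\|_{L^{\infty}}^{2}$ and the radius of the ball. Choosing $T_{0}$ small enough yields a contraction and a unique strongly continuous local solution. Positivity $P(t)\geq 0$ follows from the monotone Picard iteration initialized at $P^{(0)}(t) := \mathcal{S}(t,0)P_{0}\mathcal{S}^{*}(t,0) + \int_{0}^{t}\mathcal{S}(t,s)F\mathcal{S}^{*}(t,s)\,ds$, or equivalently by recognizing $\langle P(t)x,x\rangle$ as the value function of a quadratic-cost linear control problem, which is manifestly nonnegative.

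Armed with positivity, the feedback term satisfies $P(s)h^{*}(s)h(s)P(s)\geq 0$, so the comparison $0 \leq P(t) \leq \Pi(t)$ holds, where $\Pi(t)$ denotes the Lyapunov solution obtained by deleting the Riccati term. Since $\Pi$ is uniformly bounded on $[0,T]$, so is $P$, and the local solution extends globally by the usual continuation argument. For the trace-ideal regularity I would exploit the fact that if $B \in \mathcal{L}(H)$ and $K \in \mathcal{J}_{p}(H)$ then $BKB^{*} \in \mathcal{J}_{p}(H)$ with $\|BKB^{*}\|_{\mathcal{J}_{p}} \leq \|B\|^{2}\|K\|_{\mathcal{J}_{p}}$; combined with $\|Ph^{*}hP\|_{\mathcal{J}_{p}} \leq \|h\|_{L^{\infty}}^{2}\|P\|_{\mathcal{L}(H)}\|P\|_{\mathcal{J}_{p}}$ and the global $\mathcal{L}(H)$-bound from the previous step, rerunning the fixed-point argument in $C_{s}([0,T];\mathcal{J}_{p}(H))$ produces the claimed ideal-valued solution, recovering the trace-class ($p=1$) and Hilbert--Schmidt ($p=2$) cases as special instances.

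The main obstacle I anticipate is reconciling the strong (not uniform) operator continuity of $\mathcal{S}(t,s)$ with the quadratic nonlinearity: the contraction must act on spaces of strongly continuous operator-valued functions, the Bochner integrals of $\mathcal{S}(t,s)P(s)h^{*}(s)h(s)P(s)\mathcal{S}^{*}(t,s)$ must be shown to be well-defined and strongly continuous in $t$, and the comparison inequality $P \leq \Pi$ must be translated carefully into a genuine uniform operator-norm bound. Once this strong-measurability framework is set up---following the lines of Lions, Da Prato, Tartar and Barbu cited above---the positivity structure does most of the work, and the trace-ideal claim becomes a routine adaptation.
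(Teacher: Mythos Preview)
Your sketch is sound and follows the standard route---mild formulation via the two-parameter evolution, local fixed point, positivity, Lyapunov comparison for the global bound, and then a rerun in $\mathcal{J}_{p}$---but you should know that the paper itself does not supply a proof of this theorem. Immediately after the statement the authors write that the result ``can be deduced from'' Da Prato \cite{DaPrato1972,DaPrato1973} for the $\mathcal{L}(H)$ case, Bove--Da Prato--Fano \cite{Bove1974} for the trace-class case, and Burns--Rautenberg \cite{Burns2015} for general trace ideals. So there is no in-paper argument to compare against; your proposal is in fact a self-contained outline of essentially the strategy those references carry out.

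One small point worth tightening: the theorem as stated in the paper requires only $P_{0}\in\mathcal{J}_{p}(H)$ for the general trace-ideal conclusion, whereas your argument (correctly) needs $F\in\mathcal{J}_{p}(H)$ as well for the Lyapunov term $\int_{0}^{t}\mathcal{S}(t,s)F\mathcal{S}^{*}(t,s)\,ds$ to land in $\mathcal{J}_{p}$. This is almost certainly an omission in the paper's statement rather than a gap in your reasoning, and the cited reference \cite{Burns2015} does impose the analogous hypothesis on the inhomogeneity.
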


We recall here \cite{Reed1975} that for a bounded operator $A\in {\mathcal L}(H)$, polar decomposition can be expressed as $A=U\vert A\vert $ where $U$ is a partial isometry in $H$ and $\vert A\vert =\sqrt{A^{*}A}$. Trace-class operators ${\mathcal J}_{1}(H)$ is the set $A\in {\mathcal L}(H)$ such that $\mbox{ tr}(\vert A\vert)<\infty.$ We define
\begin{displaymath}
	{\mathcal J}_{p}(H)=\{A\in {\mathcal L}(H): \vert A\vert^{p}\in {\mathcal J}_{1}\}, \mbox{ with norm  } \|A\|_{{\mathcal J}_{p}(H)}= (\mbox{ Tr}(\vert A\vert^{p}))^{1/p}.
\end{displaymath}
For the type of generator ${\mathcal A}(t)$ we have in this paper, the above solvability theorem can be deduced from \cite{DaPrato1972, DaPrato1973} for the case when $P(t)\in {\mathcal L}(H)$, \cite{Bove1974} for the trace class case $P(t)\in {\mathcal J}_{1}(H)$ and \cite{Burns2015} for trace ideal case $P(t)\in {\mathcal J}_{p}(H), p\geq 1$.

We now brifly describe a well-known splitting of the Riccati equation due to S. Chandrasekhar \cite{Chandrasekhar1960} and introduced in filtering theory by Kailath \cite{Kailath1974} for the special case when ${\mathcal A}$ and $h$ are time invariant. Differentiating the operator Riccati equation formally in time we get
\begin{displaymath}
	 \langle \ddot P(t)x,y\rangle -\langle \dot P(t)x, {\mathcal A}^{*}y\rangle -\langle {\mathcal A}^{*}x, \dot P(t)y\rangle 
	+\langle \dot P(t)h^{*}h P(t)x, y\rangle +\langle P(t)h^{*}h\dot P(t)x, y\rangle=0.
\end{displaymath}
Setting $K(t)=P(t)h^{*}$ we can write this formally as
\begin{equation}
	\ddot P(t)= ({\mathcal A}-K(t){\mathcal H})\dot P(t) + \dot P(t)({\mathcal A}-K(t)h)^{*}.
\end{equation}
The solution of this operator equation is expressed as
\begin{displaymath}
	\dot P(t)= {\mathcal S}_{k}(t,0)\dot P(0){\mathcal S}_{k}^{*}(t,0),
\end{displaymath}
where ${\mathcal S}_{k}(t,0)$ is the solution operator for the operator evolution
\begin{equation}
	\frac{dL (t)}{dt}= ({\mathcal A}-K(t)h) L(t), \hspace{.1in} L(t_{0})=L_{0}.
\end{equation}
We differentiate $K(t)$ to get another operator evolution equation:
\begin{equation}
	\frac{dK(t)}{dt}= \dot P(t)h^{*}=L(t)\dot{P}(0)L^{*}(t)h^{*}, \hspace{.1in} K(t_{0})= \dot P(0)h^{*}.
\end{equation}

Here we can obtain $\dot P(0)$ by 
\begin{equation}
	\dot P(0)= {\mathcal A}P(0)+ P(0){\mathcal A}^{*}+ P(0)h h^{*} + Q,
\end{equation}
with $Q$ replaced by $\mbox{Tr}\Lambda$ for the pure jump case. Note also that the Riccati solution is obtained by quadrature:
\begin{displaymath}
	P(t)=P(0)+\int_{0}^{t} L(r)\dot{P}(0)L^{*}(r)dr.
\end{displaymath}
For the linear feedback control problem, rigorous treatment of such Chandrasekhar system with ${\mathcal A}$ generating a $C_{0}$-semigroup has been studied in \cite{Ito1987, Ito1988} giving solvability for $L(t)$ and $K(t)$ as strongly continuous operator-valued evolutions.

\section{Acknowledgment} The first author's research has been supported by the U. S. Air Force Research Laboratory through the National Research Council Senior Research Fellowship of the National Academies of Science, Engineering and Medicine.


\end{document}